\documentclass[a4paper, 11pt, english]{article}
\usepackage[utf8]{inputenc}
\usepackage[T1]{fontenc}
\usepackage{babel}
\usepackage{graphicx}
\usepackage{stmaryrd}
\usepackage[a4paper]{geometry}
\usepackage{amsmath,amsfonts,amssymb,amsthm,epsfig,epstopdf,url,array}
\usepackage{rotating}
\usepackage[colorlinks=true,citecolor=red,linkcolor=blue,pdfpagetransition=Blinds]{hyperref}
\usepackage{cleveref}
\usepackage{nameref}
\usepackage{enumitem}
\usepackage{comment}
\Crefname{paragraph}{Section}{Sections}
\usepackage{fancyhdr}

\usepackage{ulem}

\usepackage{fullpage}

\newcommand{\ensemblenombre}[1]{\mathbb{#1}}

\newcommand{\R}{} % Probleme LateXML
\renewcommand{\R}{\ensemblenombre{R}}

\renewcommand{\le}{\leqslant}
\renewcommand{\ge}{\geqslant}

\newcommand{\dive}[1]{\mathrm{div}}

\newcommand{\eps}{\varepsilon}

\newcommand{\ts}{\textstyle}
\newcommand{\ds}{\displaystyle}

\theoremstyle{plain} 
\newtheorem{pr}{Proposition}[section] 
\newtheorem{tm}{Theorem}
\newtheorem{thm}{Theorem}[section] 
\newtheorem{lm}[pr]{Lemma}
\newtheorem{cor}[tm]{Corollary}
\theoremstyle{definition}
\newtheorem{df}[pr]{Definition}
\newtheorem{rmk}[pr]{Remark}

\numberwithin{equation}{section}

\newtheorem{atheorem}{Theorem} 

\newcommand{\be}{\begin{equation} \label}
\newcommand{\ee}{\end{equation}}

\makeatletter
\let\original@addcontentsline\addcontentsline
\newcommand{\dummy@addcontentsline}[3]{}
\newcommand{\DeactivateToc}{\let\addcontentsline\dummy@addcontentsline}
\newcommand{\ActivateToc}{\let\addcontentsline\original@addcontentsline}
\makeatother

\begin{document}

\title{Blow-up for weakly superlinear heat equations and blow-up controllability of the linear heat equation}

\author{Kévin Le Balc'h, Philippe Souplet}

\maketitle

\begin{abstract}
The aim of this article is twofold: 
(a) to revisit the blow-up theory of weakly superlinear heat equations;
(b) to explore the notion of internal global/regional blow-up controllability for the linear heat equation.

Regarding point (a), we consider nonnegative nonlinearities which grow 
like \( u \log^p{\hskip -1.5pt}|u| \) for large $|u|$ with \( p \in (1,2] \) and may be spatially localized.
For \( p \in (1,2) \) (respectively, \( p = 2 \)), with nonnegative initial data and Dirichlet boundary conditions, we prove that if the existence time is finite, then the blow-up set is global (respectively, at least regional),
and provide the precise upper and lower blow-up estimates.
These results are entirely new in cases where the nonlinearity is spatially localized,
and they significantly improve on known results even in the non localized case.  The proofs combine interpolation 
and comparison arguments, 
test-function methods, suitable smoothing effects, and heat kernel estimates. The continuity property of the existence time
with respect to parameter and initial data is also established.

As for point (b), as an application of (a), we establish that the linear heat equation (with Dirichlet boundary conditions) is small-time globally blow-up controllable. This means that for any open subset $\omega$, any time \( T > 0 \) and any initial data, there exists a control law spatially localized in $\omega$ such that the corresponding controlled solution of the heat equation undergoes global blow-up at time \( t = T \). Additionally we obtain regional blow-up controllability results. These new controllability results bypass the traditional use of exact-controllability results to well-prepared initial datum. The control is chosen as a “feedback law” where the feedback depends both on the horizon time $T$, the initial data $u_0$ and the state $u(t)$. The proof uses precise blow-up properties of localized weakly superlinear heat equations, coming from point (a). 
\end{abstract}

%\tableofcontents

\section{Introduction}

The study of parabolic partial differential equations (PDEs) set in a bounded domain $\Omega$, with appropriate boundary conditions, has been a cornerstone of analysis, particularly due to their role in modelling diffusion processes. The heat equation, a fundamental example of a parabolic PDE, has been extensively studied for its properties of existence, uniqueness, and regularity of solutions. Early research primarily focused on understanding these aspects, but it soon became evident that nonlinearities could lead to solutions becoming unbounded in a finite time $T>0$, a phenomenon known as “blow-up”. Starting from the pioneering works of Kaplan \cite{Kap63} and Fujita \cite{Fuj66}, there was a development of various techniques to analyze blow-up phenomena for the problem
\begin{equation}
	\label{eqfu0}
		\left\{
			\begin{array}{ll}
				 \partial_t u - \Delta u = f(u)& \text{ in }  (0,T) \times \Omega, 
				\\
				u = 0 & \text{ on } (0,T)\times \partial \Omega, 
				\\
				u(0, \cdot) = u_0 & \text{ in } \Omega.
			\end{array}
		\right.
\end{equation}
Significant contributions by many authors were made to identify conditions on the nonlinearity $f$ and on the data $u_0$ under which blow-up occurs, determine the blow-up set and characterize the blow-up rates and profiles 
(see the monograph \cite{QS19} for a detailed account). In particular, it was shown in the classical works \cite{We84, FML85, MW85} that single point blow-up may occur for power nonlinearities $f(u)=|u|^{p-1}u$ with $p>1$ or for the exponential nonlinearity $f(u)=e^u$.
For such $f$, if for instance $\Omega=B _R$ is a ball and $0\le u_0\in L^\infty(\Omega)$ is radially symmetric nonincreasing,
then blow-up can occur only at the origin.

The possibility of ``large'' blow-up sets for reaction-diffusion equations of the form \eqref{eqfu0} was also investigated, leading to the study of \eqref{eqfu0} for weakly superlinear nonlinearities $f(u)=u\log^p(2+|u|)$.
For $p\in(1,2)$
and suitable $0\le u_0\in L^\infty(\Omega)$, it was shown in \cite{Lac86} that global blow-up occurs, namely $\lim_{t\to T} u(t,x)=\infty$ for all $x\in\Omega$ (see \Cref{rm:blowupremark} below for more details).
Interestingly, $p=2$ was shown to be a critical exponent: if $\Omega=B _R$ is a ball and $0\le u_0\in L^\infty(\Omega)$ is radially symmetric nonincreasing, then blow-up can occur only at the origin if $p>2$ (see \cite[Section~2]{FML85}), whereas regional blow-up may occur for $p=2$ 
(see \cite{Lac86}).
Problem \eqref{eqfu0} with weakly superlinear nonlinearities was further investigated in \cite{GV93,GV96}, 
where detailed blow-up asymptotics were obtained in case of radially symmetric solutions in a ball or in $\R^N$.

Global or regional blow-up has also been observed and studied for other classes of parabolic equations.
We refer to, e.g.,~\cite{AV95,TI97,Win03,Co11,AI17,AIU25}
for degenerate equations arising in the study of curve shortening flows, \cite{BBL88, De95, Sou99, KLW17}
 for equations with nonlocal sources, 
and \cite{GS04,FPR07,FPL07,YS20} for problems with boundary sources.

Our first line of results goes in the direction of the works \cite{Lac86,GV93,GV96},
by considering problem \eqref{eqfu0} with nonlinearities $f=f(x,u)$ depending on the spatial variable~$x$,
with a double motivation: (a) understanding the conjunction of two features of the nonlinearity: 
a weakly superlinear growth and a possible spatial localization of the support,
with the aim of determining the blow-up set and obtaining blow-up rates; (b) refining the description of blow-up asymptotics 
in nonradial situations, even in the case $f=f(u)$.
\\

On the other hand, the controllability of parabolic equations, initiated in the seminal work of Fattorini and Russell \cite{FR71}, has been a central topic in control theory over the past fifty years. Broadly speaking, controllability concerns the ability to steer the solution of a parabolic PDE by means of external inputs. For the internal control of the linear heat equation, a fundamental milestone was established independently by Lebeau–Robbiano \cite{LR95} and Fursikov–Imanuvilov \cite{FI96}, who proved the small-time null-controllability of the heat equation: for any open subset $\omega\subset\Omega$, any $T>0$ and any initial datum, one can find a control supported in~$(0,T)\times\omega$ so that the controlled solution vanishes identically at time $t=T$.
Since then, much effort has been devoted to controllability of more complex models: for instance, linear coupled parabolic systems (see the survey \cite{AKBGBdT11}) and nonlinear parabolic equations. In the latter direction, Fernández-Cara and Zuazua \cite{FCZ00} proved small-time global null-controllability for weakly superlinear heat equations; in particular, the localized control can prevent blow-up, see also \cite{Bar00, LB20}.
A different perspective was initiated in \cite{Lin18}, where the notion of blow-up controllability was introduced: it was shown that the heat equation is small-time blow-up controllable, meaning that one can drive the solution to blow up at a prescribed time by means of a localized control. Subsequently, \cite{LZ22} refined the result by prescribing the blow-up set to a single point inside the control region. Our second main results are in the same direction. We investigate whether a localized control can force the solution of the linear heat equation to undergo regional or global blow-up.\\
 
%We first begin by introducing some notation.
Throughout this paper, $\Omega$ is a bounded $C^2$ domain of $\R^N$, $N \geq 1$,
and we denote the distance to the boundary by $\delta(x)=\delta(x,\partial\Omega)$. 
%The notation $\omega$ typically stands for a nonempty open subset of $\Omega$.
Also we shall denote $\Omega_\eps=\{x\in\Omega\ ;\, \delta(x)>\eps\}$.

\subsection{Blow-up for semilinear heat equations with a localized nonlinearity}
\label{secBU}

We aim to study semilinear initial boundary value problems of the form
\begin{equation}
	\label{eqfu}
		\left\{
			\begin{array}{ll}
				 \partial_t u - \Delta u = f(x,u)& \text{ in }  (0,T) \times \Omega,  
				\\
				u = 0 & \text{ on } (0,T)\times \partial \Omega, 
				\\
				u(0, \cdot) = u_0 & \text{ in } \Omega.
			\end{array}
		\right.
\end{equation}
%We assume that
%\begin{equation}
%\label{eq:hypothesisf0}
%\hbox{$f=f(x,s)$ is locally bounded on $\overline\Omega\times\R$ and $C^1$ with respect to $s$},
%\end{equation}
Assume
\begin{equation}
\label{eq:positivedata}
0\le u_0\in L^\infty(\Omega).
\end{equation}
If $f$ is locally bounded on $\overline\Omega\times\R$ and $C^1$ with respect to $s$ then, by standard theory
(see for instance \cite[Definition 15.1 and Proposition 51.40]{QS19}), problem \eqref{eqfu} admits a unique maximal solution $u\in L^\infty_{loc}([0,T);$ $L^\infty(\Omega))$,
where $T=T(u_0)\in(0,\infty]$ denotes the maximal existence time. 
If also $f(\cdot,0)\ge 0$, then $u\ge 0$. Moreover, if $T<\infty$, then $u$ blows up in the $L^\infty$ sense, 
i.e.~$\lim_{t\to T} \|u(t)\|_\infty=\infty$. Classically, according for instance to \cite[Theorems 17.1, 17.3]{QS19}, blow-up can indeed occur for the semilinearity $f=f(s) = s^p$, with $p>1$ or even for the weak semilinearity $f=f(s) = s \log^{p}(1+s)$, with $p>1$ provided
$ \int_{\Omega} u_0 \Phi_1 dx \geq C(\Omega,f),$
where $\Phi_1$ is the first Dirichlet Laplace eigenfunction in $\Omega$.

 In this paper, we investigate the case of weak
 localized semilinearities, i.e.~
\begin{equation}
\label{eq:localizedf}
\text{supp}\ f(\cdot,s) \subset \overline{\omega},\qquad s \in \R,
\end{equation}
whose typical example  is
\begin{equation}
\label{eq:modelf}
f(x,s) = K s\log^p(a+|s|)1_\omega(x),\qquad (x,s) \in \Omega \times \R, 
\end{equation} 
with $p>1$, $K>0$, $a\ge 1$.
Here we will assume that
\be{eq:hypothesisomega}
\hbox{$\omega\subset\Omega$ is a $C^2$ smooth domain.}
\ee
We note that $T<\infty$ whenever, for instance,
$\int_{\omega} u_0 \varphi_1 dx \geq C(\omega,p,K,a),$
where $\varphi_1$ is the first Dirichlet Laplace eigenfunction in $\omega$,
as a consequence of the comparison principle and of the above criterion.

For characterizing the possible locations of the blow-up, we introduce the following standard definition. 

\begin{df}
For $0 \leq u_0 \in L^{\infty}(\Omega)$ and $T=T(u_0) < \infty$,  the blow-up set is defined as
\begin{equation}
\label{eq:defblowup}
B(u_0) := \big\{x \in \overline{\Omega}\ ;\ \exists (x_k, t_k) \in \Omega \times (0,T)\ \text{such that}\ (x_k,t_k) \to (t,x),\ |u(x_k,t_k)| \to +\infty\big\}.
\end{equation}
We note that $B(u_0)$ is a closed subset of $\overline{\Omega}$. 
Blow-up 
is said to be \textit{single-point} if $B(u_0) = \{x_0\}$ for some $x_0 \in \overline{\Omega}$, \textit{regional} if $B(u_0)$ contains a nonempty open subset of $\Omega$ and $B(u_0) \neq \overline{\Omega}$, and \textit{global} if $B(u_0) = \overline{\Omega}$.
\end{df}
Throughout this paper, $c_i(\Omega), C_i(\Omega)$ denote various positive constants depending only on $\Omega$.

Our first main result concerning \eqref{eqfu}, \eqref{eq:modelf}  shows \textit{global blow-up} 
in the range $1<p<2$ and provides a precise two-sided blow-up 
estimate.

\begin{tm}
\label{cor:globalblowup0BIS}
Let $1<p<2$, $K>0$, $a\ge 1$, assume \eqref{eq:hypothesisomega} and consider problem \eqref{eqfu} where $f$ is given by 
\eqref{eq:modelf} and $u_0$ satisfies \eqref{eq:positivedata} and $T=T(u_0)<\infty$.
Then $u$ undergoes global blow-up and satisfies the two-sided estimate:
\begin{equation}
\label{twosidedBU}
c\exp\bigl[c(T-t)^{-1/(p-1)}\bigr]\le \frac{u(t,x)}{\delta(x)}\le C\exp\bigl[C(T-t)^{-1/(p-1)}\bigr],\quad T/2<t<T,\ x\in\Omega,
\end{equation}
for some constants $c, C>0$.
 \end{tm}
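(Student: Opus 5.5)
The strategy is to prove the two-sided estimate \eqref{twosidedBU} first; global blow-up then follows at once, since its lower half forces $u(t,x)\to\infty$ for every $x\in\Omega$, so $B(u_0)=\overline\Omega$. The main objects are $M(t)=\|u(t)\|_{L^\infty(\omega)}$, or the related $L^\infty$ norm of $u/\delta$ over $\Omega$, and the fact that the nonlinearity satisfies $f\approx K u\log^p u$ inside $\omega$ and vanishes outside.

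\emph{Upper bound.} I would derive a differential inequality for $\|u(t)\|_\infty$ through a smoothing argument. The starting point is the Duhamel formula
\[
u(t)=e^{(t-s)\Delta}u(s)+\int_s^t e^{(t-\tau)\Delta}f(\cdot,u(\tau))\,d\tau.
\]
Set $M(t)=\sup_{s\le t}\|u(s)\|_\infty$. Using the pointwise bound $f\le K u\log^p(a+M)$, a Gronwall argument on short intervals gives
\[
M(t)\le M(s)\exp\bigl(C(t-s)\log^p(a+M(t))\bigr).
\]
With $L=\log M$, this becomes the growth law $L'\lesssim L^p$ in a discrete form. Integrating it backwards from the blow-up time gives the \emph{lower} bound on the rate, $L(t)\ge c(T-t)^{-1/(p-1)}$, which is needed later.

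For the upper rate I would use a test-function argument in the other direction. Take the first eigenfunction $\varphi_1$ of $\omega$ and set $y(t)=\int_\omega u\varphi_1$. Since $u\ge0$ on $\partial\omega$, the boundary term has a good sign, and Jensen's inequality (convexity of $s\log^p s$) gives
\[
y'\ge -\lambda_1 y+K y\log^p y.
\]
This forces $\log y(t)\le C(T-t)^{-1/(p-1)}$. The point is then to transfer this integral control to $L^\infty$. Here I would use a smoothing effect: the parabolic $L^1_\delta\to L^\infty$ estimate over a time step of length comparable to $(T-t)^{p/(p-1)}$, chosen so that $\log^p$ of the current value times the step is $O(1)$. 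The resulting loss is only polynomial in $(T-t)^{-1}$, which is negligible at the exponential scale. Finally, the heat kernel estimate $e^{t\Delta}g\le C\delta(x)\|g\|_1$ produces the factor $\delta(x)$ in the upper bound of \eqref{twosidedBU}.

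\emph{Lower bound.} By comparison, $u\ge e^{(t-s)\Delta}u(s)$ holds everywhere, because $f\ge0$. I would use the Dirichlet heat kernel lower bound
\[
e^{\tau\Delta}g(x)\ge c(\tau)\,\delta(x)\int g\,\delta,
\]
with $c(\tau)\ge e^{-C/\tau}$ for small $\tau$. Taking $s=t-\tau$ gives
\[
u(t,x)\ge e^{-C/\tau}\delta(x)\int_\Omega u(s)\delta.
\]
What remains is a lower bound $\int_\omega u(s)\varphi_1\ge \exp\bigl(c(T-s)^{-1/(p-1)}\bigr)$. This is the crux and the main obstacle: one must show that the integral, and not just the supremum, blows up at the ODE rate. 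I would argue via the two-sided $L^\infty$–$L^1$ comparability obtained from the smoothing step. If the integral were too small, the upper argument would give $\|u\|_\infty$ too small, contradicting the lower rate $L\ge c(T-t)^{-1/(p-1)}$ proved above. One also needs that $\|u\|_\infty$ is comparable to its value on $\omega$, which holds because outside $\omega$ the function $u$ is caloric.

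Now choose $\tau=\eta(T-t)$ with $\eta$ small. The loss $e^{-C/\tau}=\exp(-C\eta^{-1}(T-t)^{-1})$ is dominated by the gain $\exp\bigl(c(T-t)^{-1/(p-1)}\bigr)$ exactly when $1/(p-1)>1$, that is, when $p<2$. This is where the hypothesis $p<2$ enters, and it explains why $p=2$ yields only regional blow-up. The estimate then holds for $t$ close to $T$ and extends to $T/2<t<T$ by continuity and positivity.
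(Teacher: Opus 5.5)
There is a genuine gap in the upper bound, at the step ``transfer this integral control to $L^\infty$.'' Your overall architecture otherwise matches the paper's: ODE comparison for the lower $L^\infty$ rate, an eigenfunction test on $\omega$ for the upper rate, smoothing between $L^1$ and $L^\infty$, and a heat-kernel lower bound with $\tau\sim T-t$ where $\beta=1/(p-1)>1$ beats the loss $e^{-C/\tau}$.

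\textbf{The upper bound.} Write $\varphi_\omega$ for your $\varphi_1$. The quantity $y(t)=\int_\omega u\varphi_\omega$ controls neither $\|u(t)\|_{L^1_\delta(\Omega)}$ nor even $\|u(t)\|_{L^1(\omega)}$. The weight $\varphi_\omega$ vanishes on $\partial\omega$, where $u$ does not, and $y$ carries no information on $\Omega\setminus\omega$. So the $L^1_\delta\to L^\infty$ smoothing you invoke has no input. The global eigenfunction $\Phi_1$ cannot supply one either, since $\frac{d}{dt}\int_\Omega u\Phi_1$ contains the uncontrolled superlinear term $\int_\omega Ku\log^p(a+u)\Phi_1$.

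The missing idea (Lemma~\ref{lmeigenf}(ii)) is to test also with $\frac12\varphi_\omega^2$. This produces no boundary terms on $\partial\omega$, and the source term enters with a favorable sign. Moreover $\Delta(\frac12\varphi_\omega^2)=|\nabla\varphi_\omega|^2-\lambda_\omega\varphi_\omega^2$, and $|\nabla\varphi_\omega|^2+\varphi_\omega\ge c>0$ on $\omega$ by Hopf's lemma. Combined with the bound on $y$, this yields the unweighted space--time estimate $\int_{(t-1)_+}^t\int_\omega u\le C_\omega\bigl[M_0+H_1^{-1}(\frac K2(T-t))\bigr]$.

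This bound is only time-integrated, and the $L^1\to L^\infty$ kernel singularity $(t-s)^{-N/2}$ is not integrable for $N\ge2$. So a one-slice smoothing cannot be used. The paper instead reaches $L^\infty$ through Duhamel's formula, using only that the source lives in $\omega$:
\begin{itemize}
\item with $g(s)\le\bar K(1+s^q)$ and $q=1+\theta-\eps<1+\frac{2}{N+2}$, bound $\|u^q1_\omega\|_{L^{1/\theta}}\le\|u\|_\infty^{1-\eps}\|u\|_{L^1(\omega)}^\theta$;
\item apply H\"older in time and Young's inequality to absorb $\sup\|u\|_\infty^{1-\eps}$;
\item iterate over time blocks of fixed length (Theorem~\ref{pr:globalblowup0BIS3-0}(i)).
\end{itemize}
Your linearization $f\le Ku\log^p(a+M)$, with $M$ the unknown supremum on the step, would also need a continuity argument, but that is secondary.

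\textbf{The lower bound.} This part is fine in outline. Using $u(t)\ge e^{\tau\Delta}u(t-\tau)$ directly is slightly leaner than the paper's Duhamel-from-$0$ formulation, which needs the eigenfunction Step~2 to convert $\|u\|_{L^1_\delta}$ into $\int_0^t\int_\Omega f\delta$. However, your crux, a lower bound on $\int_\Omega u(s)\delta$, relies on your linearized smoothing. That smoothing requires an a priori upper bound on $\|u\|_\infty$ over $[s,s+\tau]$, so the lower bound inherits the gap above.

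The paper avoids this dependence. After the rescaling $v=\lambda e^{-\mu(t-t_1)}u$ with $\lambda=K^{N+1}$, $v$ is a subsolution of $\bar v_t-\Delta\bar v=\tilde C\bar v^m$ with $m=\frac{N+2}{N+1}$. For this problem the nonlinear $L^1_\delta\to L^\infty$ smoothing of \cite{FSW01} holds on a time interval depending only on the $L^1_\delta$ norm, so no upper bound is needed.

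Note also that what you need is directly a lower bound on $\int_\Omega u\delta$, which is exactly what smoothing produces. Reducing to $\int_\omega u\varphi_\omega$, or comparing $\sup_\Omega u$ with $\sup_\omega u$, is unnecessary.
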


 In the critical case $p=2$, we obtain the following conclusions on \textit{regional and global blow-up},
along with lower and upper blow-up estimates and information on the blow-up set.

 \goodbreak
 
\begin{tm}
\label{cor:globalblowup0TER}
Let $p=2$, $K>0$, $a\ge 1$, assume \eqref{eq:hypothesisomega} and consider problem \eqref{eqfu} where $f$ is given by 
\eqref{eq:modelf} and $u_0$ satisfies \eqref{eq:positivedata} and $T=T(u_0)<\infty$.
%Under the assumptions of Theorem~\ref{cor:globalblowup0BIS} with $p=2$, we have the following conclusions.
\begin{itemize} 
\item[(i)] 
Then $u$ undergoes at least regional blow-up. 
Namely, there exists an open ball $B\subset\subset\omega$ such that
$\ds\lim_{t\to T} \bigl(\min_{x\in\overline B}u(t,x)\bigr)=\infty$.
Moreover $u$ satisfies the upper blow-up estimate:
\be{uppernew00}
u(t,x)\le C\exp\bigl[C(T-t)^{-1}\bigr]\delta(x),\quad (t,x) \in (T/2,T)\times\Omega,
\ee
for some constant $C>0$.

\item[(ii)]The blowup set satisfies
	$B(u_0)\subset \big\{x\ ;\, {\rm dist}(x,\omega)\le C_0K^{-1/2}\big\}$, where 
$C_0>0$ depends only~$N$.
In particular, if $\omega\ne \Omega$ and $K>C_0^2\, (\sup_{x \in \Omega} {\rm dist}(x, \omega))^{-2}$,
%% \Omega \setminus \omega ->  \Omega
then blow-up is only regional.

\item[(iii)]  Assume $K\le c_1(\Omega) \,diam^{-2}(\omega)$. Then 
$\overline\omega\subset B(u_0)$ and 
	$$\min_{x\in\overline\omega} u(t,x)\ge c\exp\bigl[c(T-t)^{-1}\bigr],\quad T/2<t<T,$$
	for some constant $c>0$.
	If moreover, $\omega\ne \Omega$ (resp.,~$\omega\subset\subset \Omega$), 
	then $\overline\omega\subsetneq B(u_0)$ (resp.,~$\bar\omega\subset\subset B(u_0)$).

\item[(iv)]If $K\le c_2(\Omega)$, then $u$ undergoes global blow-up
and satisfies, 	for some constant $c>0$,
\begin{equation}
\label{eq:lowerbupKsmallp2}
u(t,x)\ge\strut c\exp\bigl[c(T-t)^{-1}\bigr]\delta(x),\quad T/2<t<T,\ x\in\Omega,
%%c\exp\bigl[c(T-t)^{-1}\bigr]\le \frac{u(t,x)}{\delta(x)},\quad T/2<t<T,\ x\in\Omega.
\end{equation}
\end{itemize}
 \end{tm}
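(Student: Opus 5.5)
The strategy is to reduce everything to the behaviour of $u$ in $\omega$ via the ODE $y'=Ky\log^2 y$, and then to propagate the information to $\Omega\setminus\omega$ through the heat kernel, where the equation is linear.

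\textbf{Step 1: upper estimate \eqref{uppernew00}.} Let $v(t)=\int_\omega u\,\varphi_1$, normalized so that $\int_\omega \varphi_1=1$. By Jensen applied to the convex function $s\mapsto Ks\log^2(a+s)$, $v$ satisfies $v'\ge -\lambda_1 v+Kv\log^2(a+v)$. This gives blow-up of $v$ no later than $T$. The reverse direction is the delicate one. I would derive an $L^1$–$L^\infty$ type smoothing inequality on short intervals: since $\log^2$ grows slower than any power, a Duhamel argument on an interval of length $\tau\sim(T-t)$ should show that
\[
\|u(t)\|_\infty\le C\exp\bigl[C(T-t)^{-1}\bigr].
\]
Concretely, I would compare $\|u(t)\|_\infty$ with the solution $y$ of $y'=Ky\log^2(a+y)$, $y(t)=\|u(t)\|_\infty$. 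Since $u$ cannot blow up before $y$ does, we get $T-t\ge\int_{\|u(t)\|_\infty}^\infty \frac{ds}{Ks\log^2(a+s)}\approx \frac{1}{K\log\|u(t)\|_\infty}$, which is exactly the claimed bound on $\|u(t)\|_\infty$. The factor $\delta(x)$ then follows from a boundary comparison. On the region where $f=0$ near $\partial\Omega$ this is just the linear heat equation. More generally, writing Duhamel's formula with the Dirichlet kernel bound $G(t,x,y)\le C\delta(x)t^{-(N+1)/2}\dots$ together with $f\le K\|u\|_\infty\log^2$, and integrating in time, yields a $\delta(x)$ factor with the same exponential rate; the constant $C$ absorbs the logarithm.

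\textbf{Step 2: regional blow-up (i) and the lower bound (iii).} Take a ball $B=B(x_0,r)\subset\subset\omega$ and a subsolution of the form $\underline u=\exp\bigl[\phi(x)/(T'-t)\bigr]$-type. The critical scaling for $p=2$ is $\log u\sim (T-t)^{-1}$ with spatial profile given by the ODE $-\Delta\psi-|\nabla\psi|^2=K\psi^2$-like structure, as in Lacey's construction. The key obstruction is matching the actual blow-up time $T$ rather than some $T'>T$. To handle it I would use continuity of the existence time, or the following argument. By Step 1's integral lower bound, $\|u(t)\|_{L^1(\omega)}$ (or a weighted mean) blows up at $T$ with rate at least $\exp[c(T-t)^{-1}]$. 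Then a local smoothing/Harnack inequality in $\omega$ turns this mean lower bound into a pointwise lower bound on a ball. When $K\,\mathrm{diam}^2(\omega)$ is small, the Harnack constant over all of $\omega$ on time scales $T-t$ is controlled, because the diffusion time $\mathrm{diam}^2$ dominates the reaction time $1/K$. This gives the uniform bound $\min_{\overline\omega}u\ge c\exp[c(T-t)^{-1}]$. Outside $\omega$, the heat kernel positivity gives $u>0$ large slightly beyond $\overline\omega$, whence $\overline\omega\subsetneq B(u_0)$. Proving this averaged-to-pointwise step is the main obstacle; I would try a weak Harnack inequality for supersolutions of $u_t-\Delta u\ge0$ combined with the rate from the ODE comparison.

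\textbf{Steps 3 and 4: (ii) and (iv).} For (ii), at points $x$ with $\mathrm{dist}(x,\omega)=d$, the Duhamel representation gives $u(t,x)\lesssim\int_0^t \sup_\omega f\,e^{-d^2/(C(t-s))}\,ds$. Inserting $\sup_\omega f\le\exp[C(T-s)^{-1}]$ with $C\sim K$ (Step 1 done with explicit $K$-dependence: $\log\|u\|_\infty\lesssim (K(T-t))^{-1}$), the Gaussian factor beats the exponential growth once $d^2/C>1/K$. This yields boundedness near $x$, i.e.\ $d\le C_0K^{-1/2}$ on $B(u_0)$. For (iv), if $K\le c_2(\Omega)$, part (iii) applies with $\omega$ replaced by the whole of $\Omega$. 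Then the lower bound $u\ge\int_{T/2}^t\int_\omega G\,f$ together with the Dirichlet heat kernel lower bound $G(t,x,y)\ge c\,\delta(x)\delta(y)$ for $t\ge$ const gives \eqref{eq:lowerbupKsmallp2}. Here I would use a time shift of order $T-t$ to keep the exponent comparable.
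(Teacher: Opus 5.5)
\textbf{There are genuine gaps: the upper-bound argument in Step 1 proves the opposite inequality, and the lower bound in Step 2 is not established.}

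Your Step 1 proves the wrong inequality. Let $y$ solve $y'=Ky\log^2(a+y)$ with $y(t)=\|u(t)\|_\infty$. Comparison shows that $u$ exists at least as long as $y$, i.e.\ $T-t\ge H_K(\|u(t)\|_\infty)\approx (K\log\|u(t)\|_\infty)^{-1}$. This rearranges to $\|u(t)\|_\infty\ge H_K^{-1}(T-t)\approx\exp[c(K(T-t))^{-1}]$, which is a \emph{lower} bound. It is exactly Step~1 of the paper's proof of the lower estimates, not the upper estimate $\|u(t)\|_\infty\le C\exp[C(T-t)^{-1}]$. A supersolution started from $u(t)$ can only show that $u$ lives at least as long as it does, so this kind of comparison never yields an upper rate.

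The upper bound comes instead from the inequality you wrote and then misread. The relation $v'\ge-\lambda_\omega v+Kv\log^2(a+v)$, combined with the finiteness of $v$ on $[0,T)$, forces $v(t)\le M_0+H_1^{-1}(\tfrac{K}{2}(T-t))$. The paper then tests with $\tfrac12\varphi_\omega^2$ to turn this weighted bound into an unweighted bound on $\int_{(t-1)_+}^t\int_\omega u$. It converts that into an $L^\infty$ bound by a Duhamel/H\"older/Young bootstrap, iterated over time intervals of fixed length. The bootstrap uses the power bound $f\le\bar K(1+u^q)$ with $q<1+\frac{2}{N+2}$ and the fact that the source lives in $\omega$. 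The outcome is $\|u(t)\|_\infty\le C\exp[C_3K^{-1}(T-t)^{-1}]$, with explicit $K$-dependence in the exponent. Your argument for (ii) is the same Gaussian-tail argument as the paper's. However, it rests precisely on this $K$-dependence, so it stays unsupported as long as Step~1 is.

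Step 2 is also not a proof: your Step 1 contains no integral lower bound, and you leave the averaged-to-pointwise passage open. Two ingredients are missing.
\begin{itemize}
\item \emph{From $L^\infty$ to the source.} To pass from the $L^\infty$ lower bound to a lower bound on the source, the paper uses $\int_0^t\int_\Omega f\Phi_1\ge\phi(t)-\phi(0)$, with $\Phi_1$ the first eigenfunction of $\Omega$. It combines this with an $L^1_\delta$--$L^\infty$ smoothing effect for $v_t-\Delta v\le Cv^m$, $m=\frac{N+2}{N+1}$, obtained after the rescaling $v=\lambda e^{-\mu(t-t_1)}u$. Together these give $\int_0^t\int_\Omega f\delta\ge cK^{-\frac{N+1}{2}}\exp[c_0(K(T-t))^{-1}]$ near $T$.
\item \emph{A fixed ball.} A weak Harnack inequality does not suffice here. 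On scales $\sqrt{T-t}$ the balls move with $t$. On a fixed scale $\rho$ it only compares $u(t)$ with $u$ at time $t-\rho^2$, and so loses the blow-up. The paper instead uses the kernel bound $G(t,x,z)\ge Ce^{-C_2|x-z|^2/t}\delta(x)\delta(z)$ and a finite cover by balls of radius $\rho=cK^{-1/2}$. The radius is chosen so that $\int_0^T\int_\Omega e^{-8C_2\rho^2/(T-s)}f\delta=\infty$. Pigeonholing then gives one fixed ball carrying infinite weighted mass, hence $\inf u/\delta\to\infty$ on that ball.
\end{itemize}
The same kernel bound, applied with the time shift $\theta=(T-t)/2$, gives (iii) and (iv) directly. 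This works when $K\,\mathrm{diam}^2(\omega)$ is small for (iii), and when $K\,\mathrm{diam}^2(\Omega)$ is small for (iv).
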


As a direct consequence of Theorem~\ref{cor:globalblowup0TER}(ii) and (iii),
for well adjusted constant $K$ and $\omega$ a ball,
we obtain the following qualitatively precise localization of the regional blow-up set
and two-sided estimate.

\begin{cor}
\label{cor:reg}
Under the assumptions of Theorem~\ref{cor:globalblowup0TER},
let $r>0$, $\kappa \in(0,\tfrac14 c_1(\Omega)]$, $K = \kappa r^{-2}$, $\omega=B_r$ and
	assume $\bar B_{\tilde \kappa r} \subset \Omega$ with $\tilde\kappa = 1 + C_0 \kappa^{-1/2}$.
Then 
\begin{equation}
\label{eq:buregionaltwoballs}
\overline{B_r} \subset B(u_0) \subset B_{\tilde \kappa r}
\end{equation}
and, for some constants $c,C>0$,
\begin{equation}
\label{eq:twoboundsregional}
c\exp\bigl[c(T-t)^{-1}\bigr] \le u(t,x)\le C\exp\bigl[C(T-t)^{-1}\bigr],\quad T/2<t<T,
\ x\in\overline{B_r}.
\end{equation}
\end{cor}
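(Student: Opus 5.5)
The corollary follows by applying Theorem~\ref{cor:globalblowup0TER}(ii) and (iii) with $\omega=B_r$ and $K=\kappa r^{-2}$. The plan is to check that the hypotheses of those two items hold for this choice, and then to read off the inclusions \eqref{eq:buregionaltwoballs} and the two-sided bound \eqref{eq:twoboundsregional}.

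\emph{Lower inclusion and lower bound.} Since $\omega=B_r$, we have ${\rm diam}(\omega)=2r$. Hence $c_1(\Omega)\,{\rm diam}^{-2}(\omega)=\tfrac14 c_1(\Omega) r^{-2}$. The assumption $\kappa\le \tfrac14 c_1(\Omega)$ gives exactly $K=\kappa r^{-2}\le c_1(\Omega)\,{\rm diam}^{-2}(\omega)$, which is the hypothesis of Theorem~\ref{cor:globalblowup0TER}(iii). Note also that $B_r$ is a $C^2$ domain, and $B_r\subset B_{\tilde\kappa r}\subset\Omega$ because $\tilde\kappa>1$, so \eqref{eq:hypothesisomega} holds. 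Item (iii) then yields $\overline{B_r}\subset B(u_0)$. It also yields $\min_{\overline{B_r}}u(t)\ge c\exp[c(T-t)^{-1}]$ for $T/2<t<T$, which is the left inequality in \eqref{eq:twoboundsregional}.

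\emph{Upper inclusion.} By Theorem~\ref{cor:globalblowup0TER}(ii), $B(u_0)\subset\{x;\ {\rm dist}(x,B_r)\le C_0K^{-1/2}\}$. Here $C_0K^{-1/2}=C_0\kappa^{-1/2}r$. For the ball, ${\rm dist}(x,B_r)=(|x|-r)_+$, so this set is the closed ball $\overline{B}_{r+C_0\kappa^{-1/2}r}=\overline{B}_{\tilde\kappa r}$. This gives $B(u_0)\subset \overline B_{\tilde\kappa r}$.

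The only delicate point is that the statement asserts the open ball $B_{\tilde\kappa r}$. To handle it, I would first check whether the proof of (ii) actually gives a strict inequality. Failing that, I would apply (ii) with a slightly smaller constant $C_0'<C_0$ if the proof permits. As a last resort, I would simply note that the closed inclusion is what is meant, since $\bar B_{\tilde\kappa r}\subset\Omega$ is assumed precisely to make this set lie in $\Omega$. I expect this boundary issue, rather than any analysis, to be the main obstacle.

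\emph{Upper bound.} The right inequality in \eqref{eq:twoboundsregional} follows from the upper estimate \eqref{uppernew00} in (i) together with $\delta(x)\le {\rm diam}(\Omega)$. Absorbing ${\rm diam}(\Omega)$ into $C$ gives $u(t,x)\le C\exp[C(T-t)^{-1}]$ on $(T/2,T)\times\overline{B_r}$.
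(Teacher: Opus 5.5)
Your proposal is correct and follows the paper. The paper obtains the corollary directly from Theorem~\ref{cor:globalblowup0TER}(ii)--(iii), using ${\rm diam}(B_r)=2r$ and $C_0K^{-1/2}=C_0\kappa^{-1/2}r$, with the upper bound coming from \eqref{uppernew00} in (i) exactly as you do. On the open ball, your first option works: the proof of Theorem~\ref{pr:globalblowup0BIS3-0}(iii1) excludes from $B(u_0)$ every $x_0$ with ${\rm dist}(x_0,\omega)\ge d$ whenever $K\ge 32C_4d^{-2}$, so points at distance exactly $C_0K^{-1/2}$ are excluded as well, which gives $B(u_0)\subset B_{\tilde\kappa r}$.
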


We note that, under the assumptions of Corollary~\ref{cor:reg}, the existence time $T$ is finite whenever 
$a\ge 2$, $\kappa\log^p a\ge C(N)$ and $u_0 1_{B_r}\not\equiv 0$; see Remark~\ref{rem-lmeigenf}.
We complete the above results by the following continuity property of the existence time
with respect to parameter, subdomain and initial data.
Beside its intrinstic interest it will be an important ingredient in the proof of our controllability results.

\begin{tm}
\label{tm:cbt}
Let $p>1$, $K\ge 0$, $a\ge 1$, consider problem \eqref{eqfu} where $f$ is given by 
\eqref{eq:modelf} and $u_0$ satisfies \eqref{eq:positivedata}.

\begin{itemize} 
\item[(i)]  Then the maximal existence time function $T=T(K,u_0)$ is continuous from $[0,\infty)\times L^\infty(\Omega)$ to $(0,\infty]$.

\item[(ii)]  Assume $x_0\in\Omega$ and $\omega=B(x_0,r)$. 
Then the function $T=T(K,u_0,r)$ is continuous from $[0,\infty)\times L^\infty(\Omega)\times(0,\delta(x_0))$ to $(0,\infty]$.
\end{itemize} 
\end{tm}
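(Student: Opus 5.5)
We argue via lower and upper semicontinuity of $T$ separately; both reduce to comparison and continuous dependence. Write $F(K,x,s)=Ks\log^p(a+|s|)1_\omega(x)$; for $s\ge0$ it is nondecreasing in $K$, in $s$, and (case (ii)) in $r$. Also $u\ge0$ for nonnegative data, and data in $L^\infty$ near a nonnegative $u_0$ can be bounded above and below by nonnegative data. We may therefore work with monotone sandwiches. Fix $(K,u_0)$ with $T=T(K,u_0)$, and let $(K_n,u_{0,n})\to(K,u_0)$.

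\emph{Lower semicontinuity.} Let $\tau<T$ and $M=\sup_{[0,\tau]}\|u\|_\infty<\infty$. By the standard local Lipschitz continuous dependence for \eqref{eqfu}, solutions with data $u_{0,n}$ and parameter $K_n$ stay within distance $1$ of $u$ on $[0,\tau]$ once $n$ is large. This follows by Gronwall on the Duhamel formula, using that $F$ is locally Lipschitz in $s$ uniformly for $K$ bounded, and $|F(K_n,\cdot,s)-F(K,\cdot,s)|\le|K_n-K|\,C(M)$. Hence $T(K_n,u_{0,n})>\tau$ for large $n$, so $\liminf T_n\ge T$; this also covers $T=\infty$. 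For (ii), the extra term is $\|(1_{B_{r_n}}-1_{B_r})F\|$. It is not small in $L^\infty$, but it is small in $L^q$ for finite $q$. Since $e^{t\Delta}$ is $L^q\to L^\infty$ smoothing with integrable singularity $t^{-N/(2q)}$ for $q>N/2$, the Duhamel difference is still $o(1)$ in $L^\infty$. That closes the Gronwall argument.

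\emph{Upper semicontinuity (the main step).} Assume $T<\infty$ and fix $\eta>0$. We need $T_n\le T+\eta$ for large $n$. A naive comparison fails because the perturbation can push $K_n<K$ or $u_{0,n}<u_0$. The plan is to use the slack supplied by blow-up: pick $t_0<T$ close to $T$. The solution $u(t_0)$ is so large on a ball $B\subset\subset\omega$ that the eigenfunction blow-up criterion on $B$ with parameter $K/2$ and data $\frac12u(t_0)$ yields blow-up within time $\eta/2$. The criterion recalled before Definition~\eqref{eq:defblowup} gives a threshold $C(B,p,K/2,a)$, and by rescaling in time it gives blow-up in time $\le\eta/2$ for a larger threshold. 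That such a ball exists with $\int_B u(t_0)\varphi_B\to\infty$ as $t_0\to T$ is a consequence of the results above: use the lower bound in \eqref{twosidedBU} for $p<2$, the regional lower bound in Theorem~\ref{cor:globalblowup0TER}(i) for $p=2$, and for $p>2$ an argument via blow-up of $\int_\omega u\varphi_1$ (Kaplan's method). By the lower-semicontinuity step, $u_n(t_0)\ge\frac12u(t_0)$ on $B$ for large $n$ (in fact $u_n(t_0)\to u(t_0)$ uniformly), and $K_n\ge K/2$. Comparison on $B$ with the problem with nonlinearity $\frac K2 s\log^p(a+s)1_B$ then gives $T_n\le t_0+\eta/2<T+\eta$. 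For (ii), $B\subset B_{r_n}$ for $n$ large, so monotonicity in $r$ applies. Finally, $K=0$ with $T=\infty$ needs no upper bound, and if $K=0$ the solution is global, so $T<\infty$ forces $K>0$.

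The main obstacle is the uniform quantitative blow-up criterion: blow-up within time $\eta/2$ once $\int_B v_0\varphi_B$ exceeds a threshold depending only on $(B,K,p,a,\eta)$. Setting $y(t)=\int_B v\varphi_B$ and applying Jensen gives $y'\ge-\lambda_B y+\frac K2 y\log^p(a+y)$, for which the blow-up time tends to $0$ as $y(0)\to\infty$ since $p>1$ and $\int^\infty ds/(s\log^p s)<\infty$. This is the step I would write out carefully. Everything else is routine continuous dependence.
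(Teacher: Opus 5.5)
Your lower semicontinuity step matches the paper's: continuous dependence plus Gronwall, with $L^m$--$L^\infty$ smoothing for $m>N/2$ to absorb $1_{B_{r_n}}-1_{B_r}$.

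\textbf{The gap (upper semicontinuity for $p>2$).} Your argument needs a fixed ball $B\subset\subset\omega$ on which $\int_B u(t_0)\varphi_B$ becomes arbitrarily large as $t_0\to T$.
\begin{itemize}
\item For $1<p\le 2$ this is supplied by the lower estimates of Theorem~\ref{cor:globalblowup0BIS2}. These are proved independently of the continuity result, so there is no circularity, and your argument goes through in that range.
\item The statement covers all $p>1$, however. For $p>2$ the paper gives no such lower bound, and ``Kaplan's method'' does not supply one. The inequality $\phi'\ge-\lambda_\omega\phi+K\phi\log^p(a+\phi)$ shows that \emph{large} $\phi=\int_\omega u\varphi_\omega$ forces blow-up. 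It does not show that blow-up forces $\phi$ to become large.
\item That converse is not a soft fact. For $u_t-\Delta u=u^q$ with $q>1+2/N$, single-point blow-up typically occurs with $\|u(t)\|_{L^1}$ bounded up to $T$.
\item For $p>2$ the blow-up is expected to be concentrated: $B(u_0)\subset\bar\omega$ by Theorem~\ref{pr:globalblowup0BIS3-0}(iii2). Nothing excludes a priori that it concentrates at points of $\partial\omega$, and then a fixed $B\subset\subset\omega$ receives no mass.
\item The $L^1_\delta$ blow-up from Steps 1--3 of the proof of Theorem~\ref{cor:globalblowup0BIS2} does extend to all $p>1$. But it only gives $\int_0^T\!\int_\omega f(u)\,\delta=\infty$ with $\delta=\mathrm{dist}(\cdot,\partial\Omega)$, which does not feed the Kaplan inequality on $\omega$ or on a ball.
\item The heat-kernel covering of Step 5 does not localize it either when $\beta<1$. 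There the Gaussian factor $e^{-c\rho^2/(T-s)}$ dominates $e^{cK^{-\beta}(T-s)^{-\beta}}$.
\end{itemize}

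\textbf{The missing idea.} Use the Kaplan inequality in the opposite direction, as the paper does.
\begin{itemize}
\item Since the solution exists on $[0,T)$, $\phi(t)$ cannot exceed the level from which the ODE would blow up before $T$. This gives the upper bound \eqref{intvarphi1}, $\phi(t)\le M_0+H_1^{-1}(\frac K2(T-t))$. With the test function $\varphi_\omega^2$ it also gives a bound on $\int_{(t-1)_+}^t\!\int_\omega u$.
\item An $L^1$--$L^\infty$ bootstrap using $g(s)\le\bar K(1+s^q)$ with $q<1+\frac{2}{N+2}$ then yields the a priori estimate \eqref{uppernew0}. It holds for every $p>1$, for global as well as blowing-up solutions, with constants locally uniform in $(\mu,u_0,r)$.
\item Suppose $T_j\ge T_0+\eta$ along a sequence. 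Then this estimate bounds $\|u_j\|_\infty$ on $[0,T_0)$ by $C[1+\exp(C\eta^{-\beta})]$, uniformly in $j$.
\item Continuous dependence transfers that bound to $u$ on $[0,T_0)$, contradicting $T_0<\infty$.
\end{itemize}
Without such an upper estimate, or a real proof that mass accumulates in a fixed ball inside $\omega$ when $p>2$, your proof establishes the theorem only for $1<p\le 2$.
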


The following remarks are in order.

\begin{rmk}
\label{rm:blowupremark}
(a) Theorems~\ref{cor:globalblowup0BIS} and \ref{cor:globalblowup0TER} are completely new in the case $\omega\ne\Omega$
and partly new even in the case $\omega=\Omega$. 
Indeed, the results in \cite{Lac86} actually require the missing, additional assumption $u_t\ge 0$,
as they make use of a Bernstein-type gradient estimate from \cite[Theorem~3.1]{FML85}, where this assumption is also missing 
(see \cite[Proposition~24.4a]{QS19} and \cite{QuiSou} for details). 
Also the results in \cite{Lac86} provide no upper (but only lower) blow-up estimates,
whereas those in \cite{GV93,GV96} provide sharp two-sided estimates but only in radially symmetric situations
(making use of typically 1$d$ techniques of zero-number and comparison with special explicit solutions).
Here, owing to some new arguments (which in particular completely avoid the use of gradient estimates; see Remark~\ref{ideaspf} for details),
we can treat the case of localized nonlinearities and improve the results from \cite{Lac86}
for general $\Omega$,
including precise two-sided blow-up estimates, without needing the extra assumption $u_t\ge 0$.

\smallskip

(b) Under the assumptions of Theorem~\ref{cor:globalblowup0BIS} with $p>2$, we have $B(u_0) \subset \bar\omega$ 
(see Theorem~\ref{pr:globalblowup0BIS3-0}(iii) below), which departs from Theorem~\ref{cor:globalblowup0TER}(iv) and confirms the critical role of the value $p=2$.
Recall also \cite{FML85} that only single-point blow-up occurs when $p>2$,
$\Omega=\omega=B _R$ is a ball and $0\le u_0\in L^\infty(\Omega)$ is radially symmetric nonincreasing.

\smallskip

(c) For $p=2$ and $K>0$, under the assumptions of Theorem~\ref{cor:globalblowup0TER}(i),
we also have the lower blow-up estimate:
\begin{equation}
\label{lowerBUreg}
\min_{x\in \bar B_\rho({y_0}(t))} u(t,x)\ge c\exp\bigl[c(T-t)^{-1}\bigr], 
\quad t_0<t<T,
\end{equation}
for some numbers $c,\rho>0$, {$t_0\in(0,T)$} and some function $y_0: [t_0,T)\to \omega$
(see Remark~\ref{Remy0}).
However we do not know in general if \eqref{lowerBUreg} holds in a fixed ball.
In other words, we cannot in general rule out the possibility that the quasi-maximum points of $u(\cdot,t)$ 
oscillate in time between two or more separated regions.
By Theorem~\ref{cor:globalblowup0TER}(iv), the existence of such fixed ball is true if $K>0$ is suitably small.
Also, for any $K>0$, the existence of such fixed ball is true at least in the special situation
when $\Omega=B_R$, $\omega=B_r$, with $0<r\le R<\infty$
and $u_0$ radially symmetric and nonincreasing in~$|x|$.
Indeed, in this case, \eqref{uppernew00} and \eqref{lowerBUreg}  imply
\be{eq:twosideestimate2}
c\exp\bigl[c(T-t)^{-1}\bigr]\le u(t,x)\le C\exp\bigl[C(T-t)^{-1}\bigr],\ \ t_0<t<T,\ 0\le |x|\le \rho
\ee
(owing to $\min_{\bar B_\rho} u(t,\cdot)=u(t,\rho)\ge \min_{\bar B_\rho(y_0(t))} u(t,\cdot)$,
since $u(\cdot,t)$ is then radial decreasing).

\smallskip

(d) For the nonlinearity $f(u)=|u|^{p-1}u$ with subcritical power $p\in(1,p_S)$, where $p_S$
is the Sobolev exponent,  the continuity of the blow-up time with respect to initial data
was proved in \cite{BC87, Me92} (positive solutions) and in \cite{Q03} (general solutions).
Such results are far from trivial and the subcritical assumption is not technical: this property fails whenever $p>p_S$ (see \cite[Section~22.4]{QS19}).
This continuity property plays a significant role in the proof of several important results, such as complete blow-up \cite{BC87}
or the construction of solutions with prescribed blow-up points \cite{Me92}.
The continuity of the blow-up time for more general nonlinearities satisfying $cu^q\le f(x,u)\le Cu^p$ as $u\to\infty$
with $1<q<p<p_S$ (plus additional assumptions) are treated in \cite{BC87,Q03},
and further H\"older continuity properties can be found in \cite{GRZ}.
Theorem~\ref{tm:cbt} seems to be the first result of this type for weakly superlinear nonlinearities.
\end{rmk}

\begin{rmk} \label{ideaspf}
Theorems~\ref{cor:globalblowup0BIS}, \ref{cor:globalblowup0TER} and \ref{tm:cbt} are consequences of Theorems~\ref{cor:globalblowup0BIS2} 
and \ref{pr:globalblowup0BIS3-0} below,
obtained for more general nonlinearities satisfying lower and/or upper growth assumptions,
and where the precise dependence of the constants in the lower and upper blow-up estimates
with respect to parameters and initial data are given.

\medskip

The proofs of the lower bounds in
Theorems~\ref{cor:globalblowup0BIS} and \ref{cor:globalblowup0TER} are based on the following arguments
(cf.~Steps 1-5 of the proof of Theorem~\ref{cor:globalblowup0BIS2}):
\begin{itemize}[itemsep=-1mm]
\item a lower blow-up estimate of $\|u(t)\|_{L^{\infty}(\Omega)}$ by a comparison argument to the ODE,
\item a lower estimate $\|f(x,u)\|_{L^1(0,t;L_{\delta}^1(\Omega))} \geq \|u(t)\|_{L_{\delta}^1(\Omega)} - C$ relying on the 
eigenfunction method  (where $L_{\delta}^1$ is the $L^1$ space weighted by the distance to the boundary),
\item a lower blow-up estimate of $\|f(x,u)\|_{L^1(0,t;L_{\delta}^1(\Omega))}$ by $L_{\delta}^{1}$-$L^{\infty}$ smoothing effects,
\item a lower pointwise blow-up estimate by using 
lower bounds of the heat kernel and covering arguments.
\end{itemize}

We note that, whereas the first and fourth items were already used in \cite{Lac86}, the second and third ones are new with respect to \cite{Lac86}
(and enable one to avoid the use of Bernstein-type gradient estimates,
which, in turn, would not be suitable for localized nonlinearities).

\medskip

The proofs of the upper bounds in
Theorems~\ref{cor:globalblowup0BIS} and \ref{cor:globalblowup0TER} rely on:
\begin{itemize}[itemsep=-1mm]
\item an upper $L^1$ blow-up estimate of $u(\cdot,t)$ in $\omega$
obtained by test-function arguments, especially making use of the square of the first eigenfunction of the Dirichlet Laplacian on $\omega$ (see Lemma~\ref{lmeigenf}),
\item an upper blow-up estimate of $\|u(t)\|_{L^{\infty}(\Omega)}$ using an $L^{1}$-$L^{\infty}$ smoothing effect
based on interpolation and bootstrap arguments, along with 
bounds of the heat kernel (see the proof of Theorem~\ref{pr:globalblowup0BIS3-0}).
\end{itemize}

As for Theorem~\ref{tm:cbt}, it is based on a contradiction argument based on an
$L^\infty$ a priori estimate (valid for blow-up as well as global solutions -- see Theorem~\ref{pr:globalblowup0BIS3-0}(i)).
\end{rmk}

\subsection{Internal blow-up controllability of the linear heat equation}
 
 We consider the internal control of the linear heat equation
\begin{equation}
	\label{eq:HeatControl}
		\left\{
			\begin{array}{ll}
				 \partial_t u - \Delta u = h 1_{\omega} & \text{ in }  (0,+\infty) \times \Omega, 
				\\
				u = 0 & \text{ on } (0,+\infty)\times \partial \Omega, 
				\\
				u(0, \cdot) = u_0 & \text{ in } \Omega.
			\end{array}
		\right.
\end{equation}
In \eqref{eq:HeatControl}, at time $t \in [0,+\infty)$, $u(t, \cdot) : \Omega \to \R$ is the\textit{ state }while $h(t, \cdot) : \omega \to \R$ is the \textit{control}, i.e.~$h$ is a variable that one can choose in the system to act on the dynamics of $u$. Roughly speaking, by fixing a time $T>0$, an initial datum $u_0=u_0(x)$ and final target $u_f=u_f(x)$, we say that \eqref{eq:HeatControl} is \textit{controllable} from $u_0$ to $u_f$ in time $T$ if there exists a control $h = h(t,x)$ such that the solution $u=u(t,x)$ of \eqref{eq:HeatControl} satisfies $u(T) = u_f$. This is the so-called classical notion of \textit{controllability}. 
In view of 
the regularizing effects of the parabolic equation \eqref{eq:HeatControl}, that prevents from driving the solution $u$ to every final state $u_f$ in $L^2(\Omega)$ for instance, the good notion of controllability for \eqref{eq:HeatControl} is the \textit{null-controllability} i.e.~$u_f=0$. 
From the seminal papers \cite{LR95} and \cite{FI96}, the following result holds.

\renewcommand{\theatheorem}{\Alph{atheorem}}

\begin{atheorem}[\cite{LR95,FI96}]
\label{tm:nullcontrollable} 
{\it Let $\omega \subset \Omega$ be a nonempty open subset of $\Omega$. The linear heat equation \eqref{eq:HeatControl} is small-time null-controllable, i.e.~for every $T>0$, $u_0 \in L^{\infty}(\Omega)$, there exists $h \in L^{\infty}((0,T)\times\omega)$ such that the solution $u$ of \eqref{eq:HeatControl} satisfies $u(T,\cdot) = 0$.}
\end{atheorem}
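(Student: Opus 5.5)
Theorem~\ref{tm:nullcontrollable} is the classical small-time null-controllability of the heat equation, and I would prove it by the Hilbert Uniqueness Method: reduce it to an observability inequality for the adjoint system and obtain that inequality from a global Carleman estimate. The adjoint problem is $-\partial_t\varphi-\Delta\varphi=0$ in $(0,T)\times\Omega$, $\varphi=0$ on $(0,T)\times\partial\Omega$, $\varphi(T)=\varphi_T$. The central goal is the $L^1$-type observability estimate
\[
\int_\Omega|\varphi(0,x)|\,dx\le C_T\int_0^T\!\!\int_\omega|\varphi|\,dx\,dt .
\]
This form is the one that gives $L^\infty$ controls. One could first prove the $L^2$ version, $\|\varphi(0)\|_{L^2}\le C\|\varphi\|_{L^2((0,T)\times\omega)}$, and then upgrade via smoothing effects and a time-splitting argument, in the spirit of Fern\'andez-Cara--Zuazua.

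The first step is the Fursikov--Imanuvilov Carleman estimate. I choose $\psi\in C^2(\overline\Omega)$ with $\psi>0$ in $\Omega$, $\psi=0$ on $\partial\Omega$, and $|\nabla\psi|>0$ outside some $\omega_0\subset\subset\omega$. I then set $\alpha=(e^{2\lambda\|\psi\|_\infty}-e^{\lambda\psi})/(t(T-t))$ and $\xi=e^{\lambda\psi}/(t(T-t))$. For $\lambda,s$ large I aim to prove
\[
\iint e^{-2s\alpha}\big(s^3\lambda^4\xi^3|\varphi|^2+s\lambda^2\xi|\nabla\varphi|^2\big)\le C\iint_{(0,T)\times\omega} e^{-2s\alpha}s^3\lambda^4\xi^3|\varphi|^2 .
\]
The method is to conjugate the operator by $e^{-s\alpha}$ and split the result into symmetric and antisymmetric parts. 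One then expands the cross term and absorbs lower-order terms using the large parameters. Boundary terms are handled by the sign of $\partial_\nu\psi$, and the region $\omega_0$, where $\nabla\psi$ may vanish, is dealt with by a cutoff that moves the gradient term onto $\omega$.

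The second step combines the Carleman estimate with energy dissipation. On $(T/4,3T/4)$ the weights are bounded above and below, so $\|\varphi(0)\|\le\|\varphi(t)\|$ gives $L^2$ observability. I then pass to the $L^1$ form. A time-splitting argument applies $L^1\to L^2$ smoothing backward from the observation window, and an $L^2\to L^1$ control on the right-hand side comes from local parabolic regularity on a slightly smaller set $\omega'\subset\omega$.

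Finally, duality produces the control. I minimize the functional
\[
J(\varphi_T)=\tfrac12\Big(\int_0^T\!\!\int_\omega|\varphi|\Big)^2+\int_\Omega u_0\varphi(0)
\]
over the completion of smooth data in the observed norm. The functional is coercive by observability, so it has a minimizer $\hat\varphi$. The optimality condition then yields $h=\big(\int_0^T\!\!\int_\omega|\hat\varphi|\big)\,\mathrm{sgn}(\hat\varphi)$ on $\omega$. This $h$ lies in $L^\infty$ with $\|h\|_\infty\le C_T\|u_0\|_{L^1}$, and it drives $u(T)=0$. One could avoid subdifferential issues by instead taking a weak-$*$ limit of penalized $L^2$ controls. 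I expect the main obstacle to be the Carleman computation itself, namely the careful bookkeeping of the conjugated-operator cross terms and the absorption of the gradient term on $\omega_0$. The $L^\infty$ upgrade is secondary but also technical.
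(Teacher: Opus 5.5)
The paper gives no proof of Theorem~\ref{tm:nullcontrollable}. It is quoted from \cite{LR95,FI96} as background, and the authors stress that their own controllability proofs avoid it: Step~1 in Section~\ref{sec:cbt} reaches a state $U(t)\ge\delta$ using only the constant control $k1_\omega$ and the Hopf lemma.

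Your outline is a correct sketch of the Fursikov--Imanuvilov route:
a global Carleman estimate, then $L^2$ observability from the two-sided bounds on the weights on $(T/4,3T/4)$ and the monotonicity of $t\mapsto\|\varphi(t)\|_{L^2}$, then duality. You rightly treat the requirement $h\in L^\infty$ separately, through an observability inequality with $\|\varphi\|_{L^1((0,T)\times\omega)}$ on the right, of the type used in \cite{FCZ00}. Your time-splitting and interior-regularity idea does produce it. Apply $L^2$ observability on $(T/4,3T/4)$ with observation in $\omega'\subset\subset\omega$. Then bound $\|\varphi\|_{L^2((T/4,3T/4)\times\omega')}$ by $\|\varphi\|_{L^1((T/4,T)\times\omega)}$ using local estimates for the backward heat equation.

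The other cited route, Lebeau--Robbiano, uses an elliptic Carleman estimate only to prove the spectral inequality $\|\sum_{\lambda_j\le\Lambda}a_je_j\|_{L^2(\Omega)}\le Ce^{C\sqrt\Lambda}\|\sum_{\lambda_j\le\Lambda}a_je_j\|_{L^2(\omega)}$. It then builds the control by alternating control and free-decay phases on shrinking time intervals. That route is shorter for time-independent self-adjoint operators. Your parabolic Carleman route is instead robust to potentials and time-dependent coefficients, with explicit dependence of the constants on them, which is what \cite{FCZ00} exploits for the weakly superlinear problems recalled in the introduction.

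Two points in your duality step need adjusting, though neither affects the statement.

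First, the completion of the data under $\|\varphi\|_{L^1((0,T)\times\omega)}$ is a closed subspace of $L^1$, which is not reflexive. So convexity and coercivity of $J$ do not by themselves give a minimizer; a bounded minimizing sequence can concentrate near $t=T$. Your penalized fallback fixes this. Simpler still is Hahn--Banach:
observability makes $1_\omega\varphi\mapsto-\int_\Omega u_0\varphi(0)\,dx$ a well-defined functional of norm at most $C_T\|u_0\|_{L^\infty}$ on a subspace of $L^1((0,T)\times\omega)$. Represent a norm-preserving extension by some $h\in L^\infty((0,T)\times\omega)$. The identity $\int_\Omega u(T)\varphi_T=\int_\Omega u_0\varphi(0)+\iint_{(0,T)\times\omega}h\varphi$ then gives $u(T)=0$.

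Second, your stated inequality, with $\|\varphi(0)\|_{L^1}$ on the left, yields $\|h\|_{L^\infty}\le 2C_T\|u_0\|_{L^\infty}$, not a bound in terms of $\|u_0\|_{L^1}$. The latter requires $\|\varphi(0)\|_{L^\infty}$ on the left. You do get that from $\|\varphi(T/4)\|_{L^2}\le C\|\varphi\|_{L^1((0,T)\times\omega)}$ together with $\varphi(0)=e^{(T/4)\Delta}\varphi(T/4)$.
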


From a modelling perspective, \Cref{tm:nullcontrollable} implies that in a room $\Omega$, one can drive the temperature to zero throughout the entire room within any arbitrarily short time using a localized heater or cooler in $\omega$. This result leads to a development of an important direction of research over the last thirty years, see for instance the survey \cite{AKBGBdT11}. Another natural question concerning \eqref{eq:HeatControl} is the characterization of the data $u(T,\cdot)$ that can be reached, starting from $u_0 \in L^{\infty}(\Omega)$, by acting locally through a control $h \in L^{\infty}((0,T)\times\omega)$. The identification of the so-called reachable space is not fully understood yet, even if important progress have been made recently in the case of a boundary control, see for instance \cite{Tuc23} for a description of such results.\\
  
In this part, motivated  
by recent works from \cite{Lin18} and \cite{LZ22}, we investigate a quite different question of controllability, namely \textit{blow-up controllability}. First, from \cite{Lin18}, we have the following result.

\begin{atheorem}[\cite{Lin18}]
\label{tm:lin}
{\it Let $\omega \subset \Omega$ be a nonempty open subset of $\Omega$. Then the linear heat equation \eqref{eq:HeatControl} is small-time blow-up controllable, i.e.~for every $T>0$, $u_0 \in L^{\infty}(\Omega)$, there exists $h \in L^{\infty}_{loc}([0,T);L^{\infty}(\omega))$ such that the solution $u$ of \eqref{eq:HeatControl} blows up at time $t=T$.}
\end{atheorem}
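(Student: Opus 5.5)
The most direct route to Theorem~\ref{tm:lin} uses a feedback control that turns the linear equation into a localized superlinear problem of type \eqref{eqfu}. Fix $T>0$ and $u_0\in L^\infty(\Omega)$. Since the statement only asks for blow-up at time $T$ (and not for a particular blow-up set), we can take $h$ of feedback form, $h = K u\log^p(a+|u|)$ on $\omega$, with $p\in(1,2)$ and $a\ge 1$ fixed. The controlled solution is then exactly the solution of \eqref{eqfu} with nonlinearity \eqref{eq:modelf}, and $h\in L^\infty_{loc}([0,T);L^\infty(\omega))$ automatically as long as $u$ stays bounded on $[0,t]$ for every $t<T$. What remains is to tune the parameters so that the maximal existence time equals $T$ exactly.

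\textbf{Sign reduction.} Since $u_0$ may change sign, we first insert an additive open-loop part in the control. One option is to compare with the problem started from $u_0^+$ or $|u_0|$. Simpler is a two-phase control. On $[0,T/2]$ we use a bounded control given by Theorem~\ref{tm:nullcontrollable}, steering the state to $0$ at time $T/2$. We then add on $\omega$ a fixed smooth bump, so that on a short interval the state becomes nonnegative and nontrivial in $\omega$ at some time $t_1<T$, with $u(t_1)\ge 0$ (for instance, start from $0$ and push with a nonnegative control). From $t_1$ on we switch to the feedback law. By uniqueness, the solution is then given by \eqref{eqfu} with initial datum $v_0:=u(t_1)\ge 0$, $v_0\not\equiv0$, and \eqref{eq:positivedata} holds.

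\textbf{Tuning the blow-up time.} We must find $K>0$ such that $T(K,v_0)=T-t_1$. Theorem~\ref{tm:cbt}(i) gives that $K\mapsto T(K,v_0)$ is continuous into $(0,\infty]$. At $K=0$ the problem is the linear heat equation, so $T(0,v_0)=\infty$. For $K$ large, the eigenfunction criterion recalled after \eqref{eq:hypothesisomega} (comparison on $\omega$ with $\varphi_1$, using $\int_\omega v_0\varphi_1>0$ and, after a time rescaling, that the blow-up time of the ODE-type lower bound behaves like $O(1/K)$) gives $T(K,v_0)\to 0$ as $K\to\infty$. The intermediate value theorem, applied on $(0,\infty]$ with the topology of Theorem~\ref{tm:cbt}, then yields $K^*$ with $T(K^*,v_0)=T-t_1$.

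\textbf{Conclusion and main obstacle.} With $K=K^*$ the feedback is locally bounded on $[t_1,T)$, and the controlled solution blows up in $L^\infty$ exactly at $T$. Theorem~\ref{cor:globalblowup0BIS} even shows that the blow-up is global. The main obstacle is the quantitative claim $T(K,v_0)\to0$ as $K\to\infty$ for a fixed datum. To establish it, I would take the lower solution $e^{-\lambda_1(\omega)t}$-weighted projection $y(t)=\int_\omega u\varphi_1$, which satisfies $y'\ge -\lambda_1 y + K\,y\log^p(a+y)$ by Jensen's inequality (convexity of $s\log^p(a+s)$ for $s\ge0$, possibly after modifying near $0$). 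This ODE blows up in time $O(K^{-1}\log^{1-p}(a+y(0)))\to0$, provided that $K\log^p(a+y(0))>\lambda_1$, which holds for large $K$.
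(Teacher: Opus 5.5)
Your argument is correct. Its second phase is the same as in the paper's proof of Theorem~\ref{tm:mainresultGlobalBU}, which contains Theorem~\ref{tm:lin} as a special case; the paper itself only cites \cite{Lin18} for the latter. That shared core is:
\begin{itemize}
\item start the feedback $Ku\log^p(\cdot)1_\omega$ from a nonnegative, nontrivial intermediate state $v_0$;
\item use the continuity of $K\mapsto T(K,v_0)$ from Theorem~\ref{tm:cbt}, together with $T(0,v_0)=\infty$;
\item show $T(K,v_0)\to0$ as $K\to\infty$ via the Jensen/eigenfunction inequality for $\int_\omega u\varphi_\omega$ (this is Lemma~\ref{lmeigenf2}(i) with $s_0=0$);
\item conclude by the intermediate value theorem.
\end{itemize}

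The real difference is the first phase. You invoke the null-controllability Theorem~\ref{tm:nullcontrollable} to reach $0$ at $T/2$, and then push with a nonnegative control. The paper simply applies the constant control $k1_\omega$ on $(0,T/2)$. Writing $u=e^{t\Delta}u_0+kz$ with $z_t-\Delta z=1_\omega$, $z(0)=0$, Hopf's lemma gives $z(t)\ge c_1\delta$ and $C^1$ regularity gives $|e^{t\Delta}u_0|\le c_2\delta$ for $t\ge T/2$. Hence $u(T/2)\ge\delta$ once $k\ge(1+c_2)/c_1$. Your route is valid, but it rests on the deep Carleman-type Theorem~\ref{tm:nullcontrollable}. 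The paper's route is elementary and yields the explicit two-stage control \eqref{eq:ControlFeedback}; the paper stresses precisely that its proof bypasses null-controllability.

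Two small fixes are needed.
\begin{itemize}
\item The statement allows an arbitrary open $\omega$. However, Theorem~\ref{tm:cbt} and the integration by parts behind your inequality for $y$ require $\omega$ of class $C^2$, since they use $\varphi_\omega=0$ and $\partial_\nu\varphi_\omega\le0$ on $\partial\omega$. As the paper does, place the feedback on a $C^2$ subdomain $\omega'\subset\omega$, for instance a ball.
\item Once $K\log^p(a+y(0))\ge 2\lambda_\omega$, the blow-up time of the comparison ODE is bounded by $2K^{-1}\int_{y(0)}^\infty \frac{ds}{s\log^p(a+s)}$. This is $O(K^{-1})$ for fixed $y(0)>0$. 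It is not of the form $O(K^{-1}\log^{1-p}(a+y(0)))$ when $y(0)$ is small, because there is an additional $\log(1/y(0))$ contribution. The conclusion $T(K,v_0)\to0$ is unaffected.
\end{itemize}
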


Then \cite{LZ22} refined this result by proving that they can actually prescribe the blow-up set of the solution to a 
unique point $x_0 \in \omega$. 

\begin{atheorem}
[\cite{LZ22}]
\label{tm:linzaag}
{\it Let $\omega \subset \Omega$ be a nonempty open subset of $\Omega$ and $x_0 \in \omega$. Then the linear heat equation \eqref{eq:HeatControl} is small-time blow-up controllable in $\{x_0\}$, i.e.~for every $T>0$, $u_0 \in L^{\infty}(\Omega)$, there exists $h \in L^{\infty}_{loc}([0,T);L^{\infty}(\omega))$ such that the solution $u$ of \eqref{eq:HeatControl} blows up at time $t=T$ and $B(u_0) = \{x_0\}$.
More precisely, for every $p>1$, $T>0$, $u_0 \in L^{\infty}(\Omega)$, there exists $h \in L^{\infty}_{loc}([0,T);L^{\infty}(\omega))$ such that for all $R>0$,
\begin{equation}
\label{eq:buprofile}
\sup\limits_{\left\{|x-a|\leq R
\sqrt{(T-t)|\log(T-t)|}\right\}}\left|(T-t)^{\frac{1}{p-1}}u(t,x)-f\left(\frac{x-a}{\sqrt{(T-t)|\log(T-t)|}}\right)\right|\xrightarrow[t \to T]{} 0,
\end{equation}
where 
$$
f(\eta)=\Big(p-1+\frac{(p-1)^2}{4p}|\eta|^2\Big)^{-\frac{1}{p-1}}, \qquad \forall \eta\in \mathbb{R}.
$$
}
\end{atheorem}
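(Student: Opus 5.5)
This result is quoted from \cite{LZ22}, and I would prove it by a ``feedback'' construction. The idea is to turn the controlled linear equation into an autonomous semilinear problem whose solutions are known to blow up at a single point with the profile \eqref{eq:buprofile}.

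\textbf{Step 1: reduction to a localized semilinear problem.} Fix a cut-off $\chi\in C_c^\infty(\omega)$ with $0\le\chi\le1$ and $\chi\equiv1$ near $x_0$. I would look for a solution of the localized equation
$$\partial_t u-\Delta u=\chi(x)\,|u|^{p-1}u\quad\text{in }\Omega,\qquad u=0 \text{ on }\partial\Omega.$$
The feedback control $h=\chi|u|^{p-1}u$ is then supported in $\omega$. It lies in $L^\infty_{loc}([0,T);L^\infty(\omega))$ as long as $u$ stays bounded before $T$. Adding a preliminary linear control phase on $(0,T/2)$ (null-controllability, Theorem~\ref{tm:nullcontrollable}, or more generally exact controllability to a smooth target) lets me replace $u_0$ by any convenient state $\tilde u_0$ at time $T/2$.

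\textbf{Step 2: constructing the blow-up solution.} I would follow the Bricmont--Kupiainen / Merle--Zaag constructive method. Work in self-similar variables around $x_0$:
$$y=\frac{x-x_0}{\sqrt{T-t}},\qquad s=-\log(T-t),\qquad w=(T-t)^{1/(p-1)}u.$$
Linearize around the approximate profile $f\bigl(y/\sqrt s\bigr)$ and decompose the perturbation on the Hermite modes of the operator $\mathcal L=\Delta-\tfrac12 y\cdot\nabla+1$. The modes split into three groups:
\begin{itemize}
\item the expanding modes $0$ and $1$, which are finitely many;
\item the null mode $2$;
\item the negative modes.
\end{itemize}
Next, consider a finite-parameter family of initial data at time $T/2$, given by the profile plus $d_0+d_1\cdot y$ corrections, multiplied by $\chi$, with $u$ set to a suitable admissible state elsewhere. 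Inside a shrinking set $V_A(s)$, the null and negative modes are controlled by a priori estimates. The expanding modes are handled by a topological (Brouwer degree) argument, which yields parameters for which the solution stays in $V_A(s)$ for all $s$. This gives \eqref{eq:buprofile} locally.

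The role of the localization $\chi$ and of the boundary is a perturbation that is exponentially small in $s$ in the blow-up region. Away from $x_0$, the equation is linear where $\chi=0$ and parabolic estimates give boundedness. In the transition zone, no-blow-up-under-some-threshold arguments (Giga--Kohn type) show single-point blow-up. Hence $B=\{x_0\}$.

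\textbf{Main obstacle.} The hardest part is matching the preliminary control phase with the prepared data. Since the linear heat equation is exactly controllable only to trajectories, I would steer $u_0$ to $0$ at time $T/2$ and then add a localized smooth control on $(T/2,T/2+\epsilon)$ that drives $0$ to $\chi\,\tilde u_0$. Such a target is reachable because the linear heat equation is exactly controllable to any smooth function compactly supported in $\omega$: take it as the free evolution of $0$ with source $(\partial_t-\Delta)(\theta(t)\chi\tilde u_0)$, where $\theta$ is a time cut-off.

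The second delicate point is controlling the nonlocal boundary-sensitive terms in the topological argument uniformly in the parameters, so that the blow-up time remains exactly $T$. To achieve this, I would shift time and use the continuity of the construction in its parameters.
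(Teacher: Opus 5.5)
Your plan follows the two-stage strategy that the paper attributes to Lin--Zaag in Remark~\ref{RemStrategy}; the paper has no proof of its own, only the citation \cite{LZ22}. A linear phase steers the state to a well-prepared datum supported in $\omega$, then the localized power feedback $h=\chi|u|^{p-1}u$ and a Merle--Zaag constructive argument give blow-up exactly at $(T,x_0)$ with the profile \eqref{eq:buprofile}. Lin--Zaag realize the first phase as a Riccati feedback, which the statement does not require, so your open-loop null control plus the explicit source $(\partial_t-\Delta)(\theta\chi\tilde u_0)$ is fine.

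One caution: boundedness where $\chi=0$ does not follow from linearity there alone. Theorem~\ref{cor:globalblowup0BIS} shows that a source localized in $\omega$ can still produce global blow-up. What you actually need is the type-I bound $\|u(t)\|_\infty\le C(T-t)^{-1/(p-1)}$ supplied by the shrinking set, combined with the Gaussian decay of the heat kernel, as in the proof of Theorem~\ref{pr:globalblowup0BIS3-0}(iii).
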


In \cite{LZ22}, the authors also prove that if $x_0 \in \Omega \setminus \overline{\omega}$, then the linear heat equation \eqref{eq:HeatControl} is not small-time blow-up controllable in $\{x_0\}$. Roughly speaking, the blow-up set of a blowing-up (controlled) solution cannot be a single point located outside the control zone.\\

Our main results of this part focus on the case of global blow-up or regional blow-up. We first have the following result of global blow-up controllability.

\begin{tm}
\label{tm:mainresultGlobalBU}
Let $\omega \subset \Omega$ be a nonempty open subset of $\Omega$. Then the linear heat equation \eqref{eq:HeatControl} is small-time blow-up controllable in $\overline{\Omega}$ i.e.,~for every $T>0$, $u_0 \in L^{\infty}(\Omega)$, there exists $h \in L^{\infty}_{loc}([0,T);L^{\infty}(\omega))$ such that the  solution $u$ of \eqref{eq:HeatControl} blows up at time $t=T$ and $B(u_0) = \overline{\Omega}$. 

More precisely, for every $p \in (1,2)$, $T>0$, $u_0 \in L^{\infty}(\Omega)$, there exist
constants $k,K,c, C>0$ depending on $T$, $\Omega$, $\omega$, $u_0$, $p$, 
and a control $h \in L_{loc}([0,T);L^{\infty}(\omega))$, defined as
\begin{equation}
	\label{eq:ControlFeedback}
	h(t) =	\left\{
			\begin{array}{ll}
				 k 1_{\omega}, & t \in (0,T/2),\\
				 	 \noalign{\vskip1mm}
				K u \log^{p}(2+|u|) 1_{\omega}, & t \in (T/2,T), 
				\end{array}
				\right.
\end{equation} 
such that
\begin{equation}
\label{eq:blowupcontrol}
c\exp\bigl[c(T-t)^{-1/(p-1)}\bigr] \le \frac{u(t, x)}{\delta(x)} \le C \exp(C(T-t)^{-1/(p-1)}),\quad t \in (T/2,T),\ x \in \Omega.
\end{equation}
\end{tm}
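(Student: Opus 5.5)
The plan is to reduce the controlled problem to the localized weakly superlinear problem \eqref{eqfu} with $f$ given by \eqref{eq:modelf} (with $a=2$), and then to invoke Theorem~\ref{cor:globalblowup0BIS} and the continuity result Theorem~\ref{tm:cbt}. We may replace $\omega$ by a small ball $B(x_0,r)\subset\subset\omega$, since a control supported there is supported in $\omega$. With the feedback \eqref{eq:ControlFeedback}, on $(T/2,T)$ the solution of \eqref{eq:HeatControl} solves
\[
\partial_t u-\Delta u=Ku\log^p(2+|u|)1_{B(x_0,r)},
\]
starting from $u(T/2)$. The task is therefore to choose $k$ and $K$ so that the maximal existence time of this semilinear problem, started at $T/2$, equals exactly $T/2$, so that blow-up happens precisely at $t=T$.

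\textbf{Step 1 (positivity on $(0,T/2)$).} For $t\in(0,T/2)$ we have $u(t)=e^{t\Delta}u_0+k\int_0^t e^{(t-s)\Delta}1_{B}\,ds$. By the Hopf/heat kernel lower bound, $\int_0^{T/2}e^{(T/2-s)\Delta}1_B\,ds\ge c_0\delta(x)$ with $c_0=c_0(T,\Omega,B)>0$. Also $|e^{(T/2)\Delta}u_0|\le C_1\|u_0\|_\infty\delta(x)$. Choosing $k$ large (depending on $u_0$, $T$) gives $v_0:=u(T/2)\ge \delta(x)$ for instance, in particular $0\le v_0\in L^\infty(\Omega)$ with $v_0\not\equiv0$. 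Since $u(T/2)$ depends only on $u_0,k$, this step is decoupled from the choice of $K$.

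\textbf{Step 2 (tuning $K$).} Let $\tau(K):=T(K,v_0)$ be the existence time of \eqref{eqfu} with $\omega=B(x_0,r)$ and data $v_0$. By Theorem~\ref{tm:cbt}(i), $K\mapsto\tau(K)$ is continuous from $[0,\infty)$ to $(0,\infty]$, and $\tau(0)=\infty$ since the problem is then the linear heat equation. For $K$ large, $\tau(K)<T/2$. To see this, use the blow-up criterion quoted before Theorem~\ref{cor:globalblowup0BIS}: since $v_0\ge\delta\ge c>0$ on $B$, blow-up occurs once $\int_B v_0\varphi_1\ge C(B,p,K,2)$. Scaling in time shows that the blow-up time of the ODE-type lower comparison is $O(1/K)$ (cf. the eigenfunction method, Remark~\ref{rem-lmeigenf}/Lemma~\ref{lmeigenf}). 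Hence $\tau(K)\to0$ as $K\to\infty$. Continuity in the sense of $(0,\infty]$ together with the intermediate value theorem then provides $K$ with $\tau(K)=T/2$.

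\textbf{Step 3 (conclusion).} With this $K$, the solution on $(T/2,T)$ is the time-shift of the maximal solution, so it blows up exactly at $T$. Theorem~\ref{cor:globalblowup0BIS}, applied to $v$ with existence time $T/2$, gives global blow-up and the two-sided estimate on $(T/2+T/4,T)$. On the remaining interval $(T/2,3T/4]$, $u$ is bounded above by $C\delta$ by parabolic regularity and below by $c\delta$ by comparison with the heat flow of $v_0\ge\delta$; adjusting $c,C$ yields \eqref{eq:blowupcontrol} on all of $(T/2,T)$. The control is in $L^\infty_{loc}([0,T);L^\infty(\omega))$ because $u$ is locally bounded before $T$.

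The main obstacle is Step 2, namely the continuity of the existence time including the value $\infty$, together with the limit $\tau(K)\to0$. The first is supplied by Theorem~\ref{tm:cbt}. For the second, I would first try the explicit eigenfunction/Kaplan argument on $B$: it yields an ODE $y'\ge -\lambda_1 y+Ky\log^p(2+y)$ (using convexity of $s\log^p(2+s)$ for $s\ge0$), whose blow-up time tends to $0$ as $K\to\infty$ when $y(0)>0$ is fixed.
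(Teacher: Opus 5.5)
Your proposal is correct and follows the paper's proof. You use the constant control $k$ to make $u(T/2)\ge\delta$, then choose $K$ by continuity of the existence time (Theorem~\ref{tm:cbt}) between $T_*(v_0,0)=\infty$ and $T_*(v_0,K)\to 0$ as $K\to\infty$, and conclude with Theorem~\ref{cor:globalblowup0BIS}. Your Kaplan argument for the limit is exactly Lemma~\ref{lmeigenf2}(i) with $s_0=0$.

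The differences are only cosmetic: you localize in a ball $B\subset\subset\omega$ where the paper uses a $C^2$ subdomain $\omega'\subset\omega$, and your Step~3 explicitly handles the interval $(T/2,3T/4]$, which the paper's one-line appeal to \eqref{twosidedBU} passes over.
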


We secondly can show the following result of regional controllability,
with qualitatively precise localization of the regional blow-up set.

\begin{tm}
\label{tm:buregional}
Let $\omega \subset \Omega$ be a nonempty open subset of $\Omega$. 
%and assume without loss of generality that with $0\in\omega$. 
Then the linear heat equation \eqref{eq:HeatControl} is regionally small-time blow-up controllable i.e.,~for every $T>0$, $u_0 \in L^{\infty}(\Omega)$, there exists $h \in L^{\infty}_{loc}([0,T);L^{\infty}(\omega))$ such that the solution $u$ of \eqref{eq:HeatControl} blows up at time $t=T$ and the blow-up 
is regional. 

More precisely,  for every $T>0$, $u_0 \in L^{\infty}(\Omega)$, 
$r_0>0$, $x_0\in\omega$, there exist
numbers $k,\kappa,c,C>0$, $c_1>1$, $\ge 2$,
$\varepsilon\in (0,T/2)$, $r\in(0,r_0)$,
%and a function $x_0: [T/2,T)\to \Omega$ 
and a control $h \in L^{\infty}_{loc}([0,T);L^{\infty}(\omega))$, defined as
\begin{equation}
\label{eq:ControlFeedbackRegional}
h(t)=
\left\{
\begin{array}{ll}
k1_{\omega}, & t\in(0,T-\varepsilon),\\[1mm]
\kappa r^{-2}u\log^{2}(a+|u|)\,1_{B(x_0,r)}, & t\in(T-\varepsilon,T),
\end{array}
\right.
\end{equation}
such that
\begin{equation}
\label{eq:blowupcontrolRegionalprecise}
\overline{B(x_0,r)} \subset B(u_0) \subset B(x_0,c_1 r) \subset \omega,
\end{equation}
 and
$$c\exp\bigl[c(T-t)^{-1}\bigr] \le u(x,t)\le C\exp\bigl[C(T-t)^{-1}\bigr],\quad T/2<t<T,\ x\in \bar B_r.$$
%\begin{equation}
%\label{eq:blowupcontrolRegional}
%B(u_0) \subset \big\{x\ ;\, {\rm dist}(x,\omega)\le C_0K^{-1/2}\big\},\qquad K>C_0^2\, (\sup_{x \in \Omega \setminus \omega} {\rm dist}(x, \omega))^{-2}
%\end{equation}
%and 
%$$C\exp\bigl[C(T-t)^{-1}\bigr] \ge \min_{x\in \bar B_\rho(x_0(t))} u(t,x) \ge c\exp\bigl[c(T-t)^{-1}\bigr],\quad T/2<t<T.$$
Here $\kappa, c_1$ depend only on $\Omega$, $a\ge 2$ depends only on $N$, and
$r,k,c,C,\varepsilon$ depend only on $T$, $\Omega$, $\omega$,~$u_0$.
\end{tm}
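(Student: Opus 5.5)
The plan is to build the control in two phases and reduce the second phase to Corollary~\ref{cor:reg} and Theorem~\ref{tm:cbt}(ii). With the feedback \eqref{eq:ControlFeedbackRegional}, the equation on $(T-\varepsilon,T)$ is exactly problem \eqref{eqfu} with $p=2$, nonlinearity $f(x,s)=\kappa r^{-2}s\log^2(a+|s|)1_{B(x_0,r)}$, and initial datum $v_0:=u(T-\varepsilon)$. The first phase must therefore do two things: make this datum nonnegative, and make the blow-up time of the second phase equal to exactly $\varepsilon$.

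\textbf{Choice of geometric parameters.} Take $\kappa=\tfrac14 c_1(\Omega)$ and $a=a(N)\ge2$ as in Remark~\ref{rem-lmeigenf}; enlarging $a$ if necessary and using a larger $\kappa$-independent constant if needed, we may assume $\kappa\log^2a\ge C(N)$. Set $c_1:=\tilde\kappa+1$ with $\tilde\kappa=1+C_0\kappa^{-1/2}$, and pick $r<r_0$ so small that $\overline{B(x_0,c_1r)}\subset\omega$. Corollary~\ref{cor:reg} then applies to any nonnegative datum whose existence time is finite. It yields $\overline{B_r}\subset B\subset B_{\tilde\kappa r}\subset B(x_0,c_1 r)\subset\omega$ together with the two-sided estimate on $\overline{B_r}$. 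Since $\overline{B_r}\ne\emptyset$ and the blow-up set stays inside $\omega\subsetneq\Omega$, the blow-up is regional. The blow-up set of the controlled solution coincides with that of the second-phase problem, because phase one produces a bounded function.

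\textbf{Phase one and nonnegativity.} For $t\in(0,T-\varepsilon)$ we have $u=e^{t\Delta}u_0+k\int_0^t e^{s\Delta}1_\omega\,ds$. The first term is bounded below by $-\|u_0\|_\infty$. On the fixed compact set $\overline{B(x_0,r)}$, once $t\ge T/4$ the second term is at least $c(T,\Omega,\omega,r)\,k$ by heat kernel lower bounds. Away from $\omega$, however, $u$ may remain negative near $\partial\Omega$. Negativity there is harmless for the location of blow-up, but Corollary~\ref{cor:reg} was stated for nonnegative data. I would handle this by comparison. Let $w$ solve the problem with datum $v_0^+$ and let $z$ solve it with datum $v_0$. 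The nonlinearity $s\mapsto s\log^2(a+|s|)$ is increasing and locally Lipschitz, so $z\le w$, and $w$ enjoys the upper bound and the localization of the blow-up set. For the lower bound I would compare $z$ from below on $B(x_0,c_1r)$ with the solution of the problem posed in that ball with zero Dirichlet data, which is $\le z$ there once $z\ge0$ on that ball. The cleanest route is instead to take $k$ large enough that $u(T-\varepsilon)\ge0$ in all of $\Omega$. This is possible because $e^{t\Delta}u_0\ge-\|u_0\|_\infty e^{t\Delta}1$ and, by the Hopf lemma, $\int_0^t e^{s\Delta}1_\omega\,ds\ge c\,\delta\ge c'e^{t\Delta}1$ for $t\ge T/4$. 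I will adopt this route.

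\textbf{Hitting time $T$ exactly (main obstacle).} Define $F(\varepsilon,k):=T\bigl(\kappa r^{-2},u_{k}(T-\varepsilon)\bigr)$, the existence time of the second phase, where $u_k$ is the phase-one solution. The goal is to find $\varepsilon\in(0,T/2)$ and $k$ with $F(\varepsilon,k)=\varepsilon$. Fix a large $k_0$ such that nonnegativity holds for all $\varepsilon\in(0,T/2]$. Using the finite-time criterion of Remark~\ref{rem-lmeigenf} and an ODE-type lower bound for $\int u\varphi_1$ on $B_r$ with large data, I would enlarge $k$ so that $F(T/2,k)<T/2$. On the other hand $F(\varepsilon,k)\ge \tau(\|u_k(T-\varepsilon)\|_\infty)>0$ uniformly as $\varepsilon\to0$, where $\tau(M)$ is the time for the ODE $y'=\kappa r^{-2}y\log^2(a+y)$ started at $M$ to blow up; hence $F(\varepsilon,k)>\varepsilon$ for small $\varepsilon$. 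Since $\varepsilon\mapsto u_k(T-\varepsilon)$ is continuous into $L^\infty$ (the data lie in $L^\infty$ and the heat semigroup part is continuous for $t>0$), Theorem~\ref{tm:cbt}(i) shows $\varepsilon\mapsto F(\varepsilon,k)-\varepsilon$ is continuous. The intermediate value theorem then gives $\varepsilon$ with $F=\varepsilon$, so the solution blows up exactly at time $T$.

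The constants $c,C$ are supplied by Corollary~\ref{cor:reg} on $(T-\varepsilon,T)$ and extend to $(T/2,T)$ by adjusting them, since $u$ is bounded and positive on $\overline{B_r}$ for $t\in[T/2,T-\varepsilon]$. The delicate points are the continuity in $\varepsilon$ and ensuring $F(T/2,k)<T/2$. Both rest on Theorem~\ref{tm:cbt} and on the finite-time criterion.
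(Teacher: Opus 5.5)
Your proof is correct, but it reaches exact blow-up at time $T$ by varying a different parameter than the paper does.

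\textbf{The paper's route.} The paper fixes $k$ by the positivity step ($U\ge\delta$ on $[T/2,T]$). It fixes $\varepsilon=\tfrac12\min(T,\tau)$, where $\tau$ is a lower bound, coming from local well-posedness, for the second-phase existence time along the linear trajectory with radius $r_0$. It then tunes the radius. At $r_0$ the second phase lasts longer than $\varepsilon$. By Lemma~\ref{lmeigenf2}(ii), using the $r^{-2}$ scaling and data bounded below on $B_{r_0}$, it lasts less than $\varepsilon$ for small $r$. Continuity in $r$, from Theorem~\ref{tm:cbt}(ii), then produces the radius.

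\textbf{Your route.} You fix $r$ in advance, take $k$ large once, and vary the switching time $\varepsilon\in(0,T/2]$. This uses only:
\begin{itemize}
\item continuity in the initial datum (Theorem~\ref{tm:cbt}(i));
\item the $L^\infty$-continuity of $t\mapsto e^{t\Delta}u_0+kz(t)$ for $t\ge T/2$;
\item ODE comparison for small $\varepsilon$;
\item the eigenfunction blow-up criterion for large data at $\varepsilon=T/2$.
\end{itemize}

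\textbf{What your route gains.} The radius is prescribed: any $r<r_0$ with $\overline{B(x_0,c_1r)}\subset\omega$ works. So the ball guaranteed to lie in $B(u_0)$ is chosen, not output by an intermediate-value argument. You also avoid continuity in $r$ and the $r$-dependence \eqref{uppernew0deltaC2} of the a priori constants.

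Moreover, you make $\phi_r(0)$ exceed $M_0$ by enlarging $k$. So Lemma~\ref{lmeigenf}(i) and \eqref{ineqphi2} already give the finite-time bound without the condition $\kappa\log^2a\ge C(N)$. You could therefore take $a=2$, which fits the claim that $a$ depends only on $N$. The paper's choice of $a$ depends on $\Omega$ through $\kappa$.

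\textbf{What the paper's route gains.} It needs $k$ only as large as positivity requires, and it gets $\varepsilon$ explicitly from local well-posedness. The price is a radius determined only implicitly.

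\textbf{One small correction.} Corollary~\ref{cor:reg} gives the two-sided bound only on the second half of the second phase, that is, for $t\in(T-\varepsilon/2,T)$. Your adjustment of constants must therefore cover $[T/2,T-\varepsilon/2]$, not just $[T/2,T-\varepsilon]$. The same argument handles it: $u$ is bounded there, and it is bounded below on $\overline{B_r}$ because $u\ge e^{(t-T+\varepsilon)\Delta}u(T-\varepsilon)$ with $u(T-\varepsilon)\ge\delta$ for $k$ large.
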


The following remarks are in order:

\begin{rmk}  From a modelling perspective, results on blow-up controllability are particularly relevant for practical applications. Indeed, if
$\Omega$ represents a container where a chemical reaction can take place and \( u(t, x) \) denotes the temperature at time \( t \) and at a point \( x \in \Omega \), the dramatic increase of \( u \) at a prescribed point as in \Cref{tm:linzaag}
or in the whole \( \Omega \) as in \Cref{tm:mainresultGlobalBU}, or in a subset as in Theorem \ref{tm:buregional}  by heating in the prescribed set \( \omega \), could lead to the ignition of the chemical reaction. 
\end{rmk}

\begin{rmk}
\label{RemStrategy}
%(i) In Theorem \ref{tm:buregional}, the assumption $\omega \neq \Omega$ is not restrictive because one can assume that the control $h$ is only active on a subset $\omega_0 \subset \omega$.
%\smallskip 
(i) The proofs of Theorems~\ref{tm:mainresultGlobalBU}-\ref{tm:buregional} and \Cref{tm:linzaag} by Lin and Zaag share some similarities. Specifically, the control strategy is split into two parts: in the first time interval \( (0, T_1) \), it consists of finding a control steering the solution to a well-prepared datum, while in the second part \( (T_1, T) \), it consists of taking the control as a nonlinear closed-loop (or feedback) control \( h(t) = F(u(t)) 1_{\omega} \) for a suitable nonlinear function \( F \). 

Lin and Zaag take the control \( h(t) = F(u(t)) 1_{\omega} = |u(t)|^{p-1} u(t) 1_{\omega} \) for \( p > 1 \) to obtain the single-point blow-up controllability of \eqref{eq:HeatControl}. 
In our case, the second part of the control strategy is mainly based on our 
results on problem \eqref{eqfu} (Theorems~\ref{cor:globalblowup0BIS} and \ref{cor:globalblowup0TER}).
Namely, we take the control \( h(t) = F(u(t)) 1_{\omega} = K u(t) \log^{p}(2+|u(t)|) 1_{\omega} \) for \( p \in (1, 2) \) (respectively, \( p = 2 \)) to obtain the global (respectively, regional) blow-up controllability of \eqref{eq:HeatControl}. 
For instance, for Theorem \ref{tm:mainresultGlobalBU}, starting from the well-prepared intermediate state, the parameter $K$ is chosen such that the blow-up of \eqref{eqfu} 
with $f(x,u)= K u(t) \log^{p}(2+|u(t)|) 1_{\omega}$ happens exactly at time \( t = T - T_1=T/2 \). 
An important step in our analysis is thus to ensure the continuity of the blow-up time of this problem with respect to $K$, a property established in \Cref{tm:cbt}.
 We highlight that the well-prepared intermediate state can be any positive initial data and
our proof completely bypasses the use of the null-controllability of the heat equation.

\smallskip

(ii) Let us now present and compare different control strategies. We focus on the global blow-up control result.
\begin{enumerate}
\item \textbf{Pure feedback control.} Classically, the first part of the control strategy, i.e.~the exact-controllability to a well-prepared initial datum is mainly based on the small-time null controllability of the heat equation, recalled in \Cref{tm:nullcontrollable}. With a bit of extra work, one can also construct such a control as a closed-loop control \( h(t) = \mathcal{K}(t) u(t) \), where \( \mathcal{K}(t) \) is a linear operator in \( L^2(\Omega) \) depending on \( t \). To do this, Lin and Zaag follow the Riccati approach from \cite{Sir02}. Another recent and simpler construction could also be done using \cite{Xia24}. To sum up, with such a first control part, the control can be decomposed as follows
\begin{equation}
	\label{eq:ControlFeedbackFull}
	h(t) =	\left\{
			\begin{array}{ll}
				 \mathcal{K}(t) u(t) 1_{\omega}, & t \in (0,T/2),\\
				 	 \noalign{\vskip1mm}
				 u \log^{p}(2+|u|) 1_{\omega}. & t \in (T/2,T), 
				\end{array}
				\right.
\end{equation} 
One advantage of this control type is the fact that it is a pure feedback control with $h(t) = K(t) u(t)$ where $K$ is a nonlinear operator that does not depend on the initial data $u_0$. One drawback is that is uses the (difficult) result of \Cref{tm:nullcontrollable}.
\item \textbf{Simple feedback control depending on the data.} On the other hand, the control strategy designed in \eqref{eq:ControlFeedback} looks rather simple and natural. Choose a suitable big constant $k>0$ to steer the data to a well prepared positive state by taking $h(t) = k 1_{\omega}$, then apply the nonlinear feedback $h(t) = K u \log^{p}(2+|u|) 1_{\omega}$ with a suitable $K$ to make the solution blow-up. Despite its simple form, we would like to insist that the constants $k,\ K$ depend on $u_0$ in a non explicit way so the control takes the following form \( h(t) = K(t, u_0) u(t) \). This is not a pure feedback control. A main advantage of this control strategy is that it only use the %%the 
continuous dependence of $T^*$ with respect to $K$ and the initial data. In the same way, one could even simplify a little bit the preceding feedback by taking
\begin{equation}
	\label{eq:ControlFeedbackGlobaBis}
	h(t) =	\left\{
			\begin{array}{ll}
				 k 1_{\omega}, & t \in (0,T_1),\\
				 	 \noalign{\vskip1mm}
				 u \log^{p}(2+|u|) 1_{\omega}, & t \in (T_1,T), 
				\end{array}
				\right.
\end{equation} 
with $k = k(T,u_0)$, $T_1 = T_1(T,u_0)$ depend on $T$ and $u_0$ in non explicit way. A fully nonlinear feedback law
\begin{equation}
	\label{eq:ControlFeedbackGlobaTer}
	h(t) =	K (2+|u|) \log^{p}(2+|u|) 1_{\omega}, 
\end{equation} 
where $K$ depends also on $T$, $u_0$ in a non explicit way. See \Cref{rmk:otherpossiblecontrolstrategies} below.
\item \textbf{Open loop control.} If we do not want to pursue the objective of finding a control in a feedback form, one can rather takes an open loop control of the following form
\begin{equation}
h(t) = e^{(T-t)^{-\alpha}} 1_\omega,\quad \alpha >1. 
\end{equation}
Indeed such a control would lead to global blow-up. This mainly comes from standard heat kernel estimates, see Step 4 of the proof of Theorem~\ref{cor:globalblowup0BIS2}.
\end{enumerate}
\end{rmk}

\begin{rmk} 
(i) The results of Theorems \ref{tm:lin}, \ref{tm:linzaag}, \ref{tm:mainresultGlobalBU} and \ref{tm:buregional} 
are exactly in the opposite direction of \cite{FCZ00} and \cite{LB20}, which considered semilinear heat equations with weak nonlinearities \( |u| \log^p(1+|u|) \), \( p \in (1, 2) \), and obtained global null controllability results with the help of a localized control. In particular, the control is able to prevent the blow-up from happening.

\smallskip
(ii) The generalizations of the previous results about the internal blow-up controllability of the heat equation to coupled linear parabolic systems, in the spirit of \cite{HLL21} and considering the case of ordinary differential systems, could be interesting to consider for future research.
\end{rmk}

\medskip

\noindent \textbf{Acknowledgments.} The authors would like to warmly thank Sylvain Ervedoza
for interesting discussions during the preparation of this work.

\section{Lower estimates}
\label{seclower}

\subsection{Results for more general nonlinearities}

In the rest of this paper %%this section 
we denote
$$\beta=\frac{1}{p-1}.$$
The lower estimates in Theorems~\ref{cor:globalblowup0BIS} and \ref{cor:globalblowup0TER} 
 will be consequences of the following result for more general nonlinearities satisfying an upper growth assumption. More precisely, the following result establishes the left hand side estimate of \eqref{twosidedBU}, the part (i), the estimate \eqref{lowerBUreg}, the parts (iv) and (v) of Theorem \ref{cor:globalblowup0TER}.

\begin{thm}
\label{cor:globalblowup0BIS2}
Let $1<p\le 2$, $K>0$, and consider problem \eqref{eqfu} where 
\begin{equation}
\label{eq:hypothesisf}
0\le f(x,s)\le Ks\log^p(2+s),\quad (x,s) \in \Omega \times [0,\infty),
\end{equation}
and $u_0$ satisfies \eqref{eq:positivedata} and $T=T(u_0)<\infty$.

\begin{itemize} 
\item[(i)] Assume $p<2$.
Then $u$ undergoes global blow-up and satisfies the lower estimate:
\begin{equation}
\label{lowerBUB}
u(t,x)\ge C_1\exp\bigl[C_2(T-t)^{-\beta}\bigr]\delta(x),\quad (T-\tau)_+\le t<T,\ x\in\Omega,
\end{equation}
where $C_1=c_1(\Omega,p)K^{-\frac{N+1}{2}}>0$, $C_2=c_2(p)K^{-\beta}>0$
and $\tau>0$ depends only on $\Omega,p,K,\|u_0\|_\infty$.

\item[(ii)] Assume $p=2$.
\begin{itemize} 
\item[(ii1)] %%%
There exists {$K_0=K_0(\Omega)>0$}, such that,
if $K\le {K_0}$, then $u$ undergoes global blow-up ($B(u_0)=\overline\Omega$)
and satisfies \eqref{lowerBUB} with $p=2$.

\item[(ii2)]  {Assume $K>K_0$.}
Then $u$ undergoes at least regional blow-up. 
Namely, there exists an open ball $B\subset\subset\Omega$ such that
$\ds\lim_{t\to T} \bigl(\min_{x\in\overline B}u(t,x)\bigr)=\infty$. 
Moreover, $u$ satisfies the lower blow-up estimate:
\begin{equation}
\label{lowerBUreg2}
{u(t,x)}\ge C_1\exp\bigl[C_2(T-t)^{-1}\bigr]{\delta(x)},
\quad t_0\le t<T,\ {x\in \Omega\cap B_\rho(x_0(t))}
\end{equation}
for some function $x_0: [t_0,T)\to \Omega$,
where $t_0=\max(T/2,T-\tau)$, $C_1=c_1K^{-\frac{N+1}{2}}$, $C_2=c_2K^{-1}$, {$\rho=c_3K^{-1/2}$, 
with $c_1=c_1(\Omega)>0$, $c_3=c_3(\Omega)>0$} and $c_2>0$ a universal constant.

\item[(ii3)]
{Assume $K>K_0$,} let $\omega\subset\Omega$ be open and assume 
$f(x,\cdot)=0$ for $x\in\Omega\setminus\omega$. Then, in (ii2) the ball $B$ can be taken such that $B \subset\subset \omega$ and the function $x_0$ can be taken such that $x_0: [t_0,T)\to \omega$. Moreover if $K\le {c_4}\,diam^{-2}(\omega)$,
{where $c_4=c_4(\Omega)>0$,} then 
$B(u_0)\supset\overline\omega$ and 
$$u(t,x)\ge C_1\exp\bigl[C_2(T-t)^{-1}\bigr]\delta(x),\quad (T-\tau)_+\le t<T,\ x\in\omega,
$$
where $C_1,C_2$ are as in assertion (ii2)
and $\tau>0$ depends only on $\Omega,K,\|u_0\|_\infty$.
\end{itemize}
\end{itemize} 
 \end{thm}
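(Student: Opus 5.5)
We follow the strategy announced in Remark~\ref{ideaspf}: four steps, each turning a lower bound into a stronger one, ending with a pointwise bound.

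\textbf{Step 1 ($L^\infty$ lower bound via the ODE).} Set $M(t)=\|u(t)\|_\infty$. Comparison with the spatially homogeneous supersolution gives $u(s)\le y(s-t)$ for $s\ge t$, where $y'=Ky\log^p(2+y)$ and $y(0)=M(t)$. Since $y$ blows up after time $\int_{M(t)}^\infty \frac{ds}{Ks\log^p(2+s)}\approx \frac{\beta}{K}\log^{1-p}M(t)$, and $u$ exists up to $T$, we get
\[
M(t)\ge \exp\bigl[cK^{-\beta}(T-t)^{-\beta}\bigr].
\]

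\textbf{Step 2 (weighted $L^1$ identity).} Test the equation with $\Phi_1$, the first Dirichlet eigenfunction of $\Omega$, and recall $\Phi_1\sim\delta$. This gives
\[
\int_0^t\!\!\int_\Omega f\Phi_1 \ge \int_\Omega u(t)\Phi_1-C\|u_0\|_\infty,
\]
so $\|f\|_{L^1(0,t;L^1_\delta)}\ge \|u(t)\|_{L^1_\delta}-C$.

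\textbf{Step 3 (smoothing).} Write the Duhamel formula from time $t-s$ and use the $L^1_\delta\to L^\infty$ smoothing of the Dirichlet heat semigroup, $\|e^{s\Delta}\phi\|_\infty\le Cs^{-(N+1)/2}\|\phi\|_{L^1_\delta}$. The forcing is controlled through $f\le Ku\log^p(2+u)\le Ku\log^p(2+M)$. Combining this with Steps 1 and 2 on a short window $s\sim (T-t)$, or more precisely $s\sim K^{-1}\log^{1-p}M$, shows that $F(t):=\int_0^t\|f(\tau)\|_{L^1_\delta}\,d\tau$ satisfies
\[
F(t)\ge c\,K^{\frac{N+1}{2}}\ldots\cdot\exp\bigl[c(T-t)^{-\beta}\bigr].
\]
The polynomial prefactors in $M$ are absorbed into the exponential, and this is where the constants $C_1\sim K^{-(N+1)/2}$ and $C_2\sim K^{-\beta}$ come from.

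\textbf{Step 4 (pointwise bound).} Since $f\ge0$, we have $u(t)\ge \int_0^t e^{(t-\tau)\Delta}f(\tau)\,d\tau$. Use the Dirichlet heat kernel lower bound
\[
G(\sigma,x,y)\ge c\,\delta(x)\delta(y)\,\sigma^{-(N+2)/2}e^{-C|x-y|^2/\sigma},
\]
valid for $\sigma$ small, with $\sigma=t-\tau$ ranging over $[\tfrac12(T-t),T-t]$ or a similar interval. The Gaussian factor costs at most $e^{-C\,\mathrm{diam}^2/(T-t)}$.

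\emph{Case $p<2$.} Here $(T-t)^{-\beta}\gg (T-t)^{-1}$, so the loss is negligible. The mass of $f$ produced between $\tau$ and $t$ (Step 3 applied at time $\tau$) is $\exp[c(T-\tau)^{-\beta}]$, and this spreads to every $x$, giving \eqref{lowerBUB}. Global blow-up follows.

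\emph{Case $p=2$.} The gain $\exp[cK^{-1}(T-t)^{-1}]$ and the loss $\exp[-C|x-y|^2/(T-t)]$ are of the same order. The gain beats the loss only for $|x-y|\le\rho=c_3K^{-1/2}$.
\begin{itemize}
\item For (ii1), when $K\le K_0$ we have $\rho\gtrsim\mathrm{diam}\,\Omega$, and the argument of the case $p<2$ goes through, giving global blow-up and \eqref{lowerBUB} with $p=2$.
\item For (ii2), cover $\Omega$ by $\sim\rho^{-N}$ balls of radius $\rho/2$. At each time $\tau$, pigeonhole a ball carrying a fixed fraction of the $L^1_\delta$-mass of $f(\tau)$, and let $x_0(t)$ be its centre. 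This gives \eqref{lowerBUreg2} on $B_\rho(x_0(t))$. The existence of a fixed ball $B$ with $\min_{\overline B}u\to\infty$ requires selecting a ball that is good for times accumulating at $T$; since there are finitely many balls, pass to one chosen infinitely often. Then use the fact that the bound at times $t_k$ propagates forward, thanks to $u\ge e^{(t-t_k)\Delta}u(t_k)$, over intervals of length $\lesssim T-t_k$, to conclude on a slightly smaller ball.
\item For (ii3), $f$ vanishes outside $\omega$, so all the mass sits in $\omega$. The selected balls can be taken inside $\omega$, after a minor shrinking to keep them compactly contained. When $K\le c_4\,\mathrm{diam}^{-2}(\omega)$ we have $\rho\gtrsim\mathrm{diam}(\omega)$, and the bound covers all of $\omega$.
\end{itemize}

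\textbf{Main obstacle.} The delicate step is Step 3: extracting an exponential lower bound on the time-integrated weighted mass of $f$ from the $L^\infty$ bound, without gradient estimates. The difficulty is controlling the time window so that the smoothing constant $s^{-(N+1)/2}$ and the logarithmic factors do not destroy the rate. I would first try taking $s=\epsilon(T-t)$ with $\epsilon$ small, and bound the Duhamel term using the monotone growth of $F$. The fixed-ball claim in (ii2) is the second subtle point.
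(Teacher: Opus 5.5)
\textbf{Where the proposal breaks down.} Your Steps 1 and 2 match the paper's. The structure of Step 4 is also right. In particular, your forward-propagation argument for a fixed ball in (ii2) is valid: you use $u(t)\ge e^{(t-t_k)\Delta}u(t_k)$ along a sequence $t_k\to T$ at which the same ball is selected. This is a genuine alternative to the paper, which instead pigeonholes the divergent integral $\int_0^T\!\int_{B_i}e^{-8C_2\rho^2/(T-s)}f\delta$. The genuine gap is Step 3, which you leave open, and the route you sketch for it does not work.
\begin{itemize}
\item The bound $f\le Ku\log^p(2+M)$ on $[t-s,t]$ needs $M\ge\sup_{[t-s,t]}\|u\|_\infty$. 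Under the hypotheses of the theorem (only $0\le f\le Ks\log^p(2+s)$) you have no upper bound on $\|u\|_\infty$ and no monotonicity of $t\mapsto\|u(t)\|_\infty$. You only have the lower bound $\|u(t)\|_\infty\ge v(t)$.
\item Even granting $M\approx v(t)$, your windows are far too long. Treating the forcing as $Lu$ with $L=K\log^p(2+M)$ costs a factor $e^{Ls}$, since $u(t)\le e^{Ls}e^{s\Delta}u(t-s)$. With $s\sim K^{-1}\log^{1-p}M$ you get $Ls\sim\log M$, so $e^{Ls}$ is itself a power of $M$. The resulting inequality $M\lesssim e^{Ls}s^{-\frac{N+1}{2}}\|u(t-s)\|_{L^1_\delta}$ then gives no exponential lower bound.
\item With $s=\epsilon(T-t)$ you get $Ls\gtrsim\epsilon K^{-\beta}(T-t)^{-\beta}$. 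This loss has the same exponential order as the gain, and could only be tolerated with an upper bound on $M$ that you do not have.
\item Bounding the Duhamel term by $F$ via $L^1_\delta\to L^\infty$ smoothing is also impossible, because $(t-\sigma)^{-\frac{N+1}{2}}$ is not integrable at $\sigma=t$.
\end{itemize}

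\textbf{The missing idea.} You need a smoothing estimate whose admissible window is dictated by $\|u(t)\|_{L^1_\delta}$, not by $\|u\|_\infty$. The paper uses $s\log^p(2+s)\le C(p)(s+s^m)$ with the $L^1_\delta$-subcritical exponent $m=\frac{N+2}{N+1}<1+\frac{2}{N+1}$. After the rescaling $v=K^{N+1}e^{-\mu(t-t_1)}u$ with $\mu=C(p)eK$, $v$ is a subsolution of $\bar v_t-\Delta\bar v=\tilde C\bar v^m$ on windows of length at most $\mu^{-1}$. The nonlinear $L^1_\delta$--$L^\infty$ smoothing effect of Fila--Souplet--Weissler for that problem then yields $\|u(t+\tau_0)\|_\infty\lesssim K^{-(N+1)}\bigl(K^{N+1}\|u(t)\|_{L^1_\delta}+1+(T-t)^{-\frac{N+1}{2}}+K^{\frac{N+1}{2}}\bigr)^2$, with $\tau_0=\min\{c(K^{N+1}\|u(t)\|_{L^1_\delta}+1)^{-\frac{2}{N+1}},\tfrac{T-t}{2},\mu^{-1}\}$. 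Comparing with Step 1 at time $t+\tau_0$ gives $\|u(t)\|_{L^1_\delta}\gtrsim K^{-\frac{N+1}{2}}\exp[cK^{-\beta}(T-t)^{-\beta}]$ up to lower-order terms, and Step 2 transfers this to $F$.

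Alternatively, your linearization can be rescued by a record-time argument. For $t$ close to $T$, put $A=v(t)$ and let $t_A\le t$ be the first time at which $\|u\|_\infty$ reaches $A$, so that $u\le A$ on $[0,t_A]$. With the much shorter window $s=(K\log^p(2+A))^{-1}$, comparison gives $A\le Cs^{-\frac{N+1}{2}}\|u(t_A-s)\|_{L^1_\delta}$, and $F(t)\ge F(t_A-s)$ by monotonicity.

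\textbf{Smaller points in Step 4.}
\begin{itemize}
\item Your kernel bound with $\sigma^{-(N+2)/2}$ is false for small $\sigma$ when $\delta(x)\delta(y)>\sigma$. Use instead $G\ge C_3e^{-C_2|x-y|^2/\sigma}\delta(x)\delta(y)$ for $\sigma<T$.
\item Step 3 only controls the cumulative mass $F$. So you must integrate over all $s\in(0,t-\theta)$, not over a window. Likewise, the pigeonhole must be applied to the time-integrated weighted mass over $(0,2t-T)$, not instant by instant.
\end{itemize}
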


 \begin{rmk}
\label{Remy0}
It follows from \eqref{lowerBUreg2} and the regularity of $\Omega$ that $u$ satisfies
 \begin{equation}
\label{lowerBUreg2Bis}
\min_{x\in \bar B_{\rho'}(y_0(t))} u(t,x)\ge c\exp\bigl[c(T-t)^{-1}\bigr], 
\quad t_0\le t<T,
\end{equation}
 for some $\rho'\in(0,\rho)$ and some function $y_0: [t_0,T)\to \Omega$, with $B_{2\rho'}(y_0(t))\subset\Omega$.
 \end{rmk}

\subsection{Preliminaries and notation}

Recalling the notation 
$$ \delta(x) = \mathrm{dist}(x, \partial\Omega),$$
we define for all $1 \leq q \leq \infty$, 
$$L_{\delta}^q= L_{\delta}^q(\Omega) = L^q(\Omega), \delta(x) dx).$$
For $1 \leq q < \infty$, $L_{\delta}^q$ is endowed with the norm
$$ \|u\|_{L_{\delta}^p} = \left(\int_{\Omega} |u(x)|^q \delta (x) dx\right)^{1/q}.$$
We note that $L^{\infty}_{\delta}(\Omega) = L^{\infty}(\Omega)$ with same norm.
Let $\Phi_1$ be the first eigenfunction of the Dirichlet Laplacian in $\Omega$,
normalized by $\int_\Omega\Phi_1\,dx=1$. Recall that, by the positivity of the first eigenfunction inside the domain $\Omega$ and Hopf's lemma, see for instance \cite[Section 1. Preliminaries]{QS19}, we have the equivalence 
\be{HopfDelta}
c\delta\le\Phi_1\le C\delta,
\ee
for some positive constants $c, C>0$ depending on the domain $\Omega$.

\smallskip

Let $p>1$, $a\ge 2$, $K>0$, we define
$$f(s)=f_K(s) = K s \log^p(a+s),\qquad s \in [0,\infty),$$
and
\be{defHODE}
H_K(s)=\int_s^\infty d\tau/f_K(\tau)=K^{-1}H_1(s)<\infty,\qquad s \in (0,\infty).
\ee
Recall that the solution of the ODE $v'= f(v)$, such that $v(t) \to +\infty$ as $t \to T$, is given by 
\be{defvODE}
v(t) = H_K^{-1}(T-t)\strut=H_1^{-1}(K(T-t)).
\ee
Moreover elementary computations show that it satisfies
\be{behaviorODE}
c_0 \exp[c_0K^{-\beta}(T-t)^{-\beta}]\le v(t)\le C_0 \exp[C_0K^{-\beta}(T-t)^{-\beta}],\quad 0\le t<T,
\ee
where $c_0,C_0>0$ depend only on $p$.

\subsection{Proof of Theorem~\ref{cor:globalblowup0BIS2}} 

\smallskip

{\bf Step 1.} {\it Lower $L^\infty$ blow-up estimate.} We claim that there exists $c_0=c_0(p)>0$ 
such that
\begin{equation}
\label{eq:lowerblowupestimate}
\|u(t)\|_\infty\ge c_0 \exp[c_0K^{-\beta}(T-t)^{-\beta}],\quad 0\le t<T.
\end{equation}

We use a comparison argument with the ODE solution $v$ in \eqref{defvODE} (see, e.g.,~the proof of \cite[Proposition 23.1]{QS19}).
From \eqref{behaviorODE}, it suffices to prove that $$\|u(t)\|_{\infty} \geq v(t).$$
Assume for contradiction that there exists $t_0 \in [0,T)$ such that $\|u(t_0)\|_{\infty} < v(t_0)$. Note in particular that $v$ is a solution to $v'(\tau) = F(v(\tau)) \geq f(x, v(\tau))$ and $\|u(t_0)\|_{\infty} < v(t_0)$. We have that for some $\varepsilon >0$, $\|u(t_0)\|_{\infty} \leq v(t_0-\varepsilon)$. By the comparison principle, we then deduce that $0 \leq u(t,x) \leq v(t-\varepsilon)$ for $(t,x) \in (t_0,T) \times \Omega$, so it is bounded in $(t_0,T) \times \Omega$, this is a contradiction. So \eqref{eq:lowerblowupestimate} holds.

\smallskip

{\bf Step 2.} {\it Eigenfunction argument.}
We claim that there exists $c>0$ depending only on $\Omega$ such that:
\begin{equation}
\label{eq:step2eigenargument}
\int_0^t \int_\Omega f(x,u(s,x))\delta(x) dxds\ge c\|u(t)\|_{L^1_\delta}-\|u_0\|_\infty,\quad 0<t<T.
\end{equation}
Set $\phi(t)=\int_\Omega u(t,x)\Phi_1(x)dx$. Testing \eqref{eqfu} with $\Phi_1$, we get
\be{eq:step2eigenargument2}
\phi'(t)+\lambda_1 \phi(t)=\int_\Omega f(x,u(t,x))\Phi_1(x)dx.
\ee
The second left hand side term is non-negative so, by integrating in time, we obtain
$$\int_0^t \int_\Omega f(x,u(s,x))\Phi_1 dxds\ge \phi(t)-\phi(0),$$
and inequality \eqref{eq:step2eigenargument} follows from \eqref{HopfDelta}.

\smallskip

{\bf Step 3.} {\it Smoothing effect and time-space lower bound on the source term.}
We claim that there exist $c_0=c_0(p)>0$, $C=C(p,\Omega)>0$ 
and $\tau=\tau(\Omega,p,K,\|u_0\|_\infty)>0$ such that
\begin{equation}
\label{eq:smoothingeffect}
\int_0^t \int_\Omega f(x,u(s,x))\delta(x) dxds
\ge CK^{-\frac{N+1}{2}}\exp[c_0K^{-\beta}(T-t)^{-\beta}],\quad (T-\tau)_+\le t<T.
\end{equation}

Let $m=(N+2)/(N+1)$ and note that $s\log^p(2+s)\le C(p)(s^m+s)$ for all $s\ge 0$.
Fix $t_1\in(0,T)$ and set
\be{defvu}
v:=\lambda e^{-\mu(t-t_1)}u,\quad\hbox{where $\lambda=K^{N+1}$ and $\mu=C(p)eK$.}
\ee
For each $t_2\in(t_1,T)$ with 
\be{condt12}
t_2-t_1\le \mu^{-1}=(C(p)eK)^{-1},
\ee
 using $\lambda K=\lambda^m$, 
we see that $v$ satisfies
$$\begin{aligned}
v_t-\Delta v
&\le \lambda Ku\log^p(2+u)-\mu \lambda e^{-\mu(t-t_2)}u 
\le C(p)\lambda^mu^m+\big(C(p)\lambda^{m-1}-\mu e^{-1}\big) \lambda u\le \tilde C(p)v^m
\end{aligned}$$
in $\Omega\times(t_1,t_2]$, with $\tilde C(p)=C(p)e^m$.
By the comparison principle, it follows that $v(t_2)\le \bar v(t_2)$, where $\bar v$ is the solution of 
\begin{equation}
	\label{eqfu0b}
		\left\{
			\begin{array}{ll}
				 \bar v_t-\Delta  \bar v = \tilde C(p) \bar v^m& \text{ in }  (t_1,t_2] \times \Omega, 
				\\
				\bar v = 0 & \text{ on } (t_1,t_2]\times \partial \Omega,
								\\
				\bar v(t_1, \cdot) = v(t_1, \cdot) & \text{ in } \Omega.
			\end{array}
		\right.
\end{equation}
We now apply to problem \eqref{eqfu0b}
 the $L^q_\delta-L^\infty$ smoothing effect for nonlinear heat equations in \cite{FSW01},
which provides the existence of 
$c_1\in(0,1)$ and $C_1>0$
depending only on $\Omega,p$, such that
\be{smooth-eff}
t_2-t_1\le c_1\bigl(\|\bar v(t_1)\|_{L^1_\delta}+1\bigr)^{-2/(N+1)}
\Longrightarrow \|\bar v(t_2)\|_\infty \le C_1(t_2-t_1)^{-\frac{N+1}{2}}\|\bar v(t_1)\|_{L^1_\delta}.
\ee
More precisely, since $m<1+2/(N+1)$, this follows from
Theorem~4, Remark~3.2(b) and formula (3.10) in \cite{FSW01}, where the quantities 
$p,q,M,K,T$ therein can be chosen as $p=m$, $q=1$, $M=\|\bar v(t_1)\|_{L^1_\delta}$, 
$K=2(M+1)$ and $T=C(m,\Omega)(M+1)^{-a}$ with $\frac{1}{a}=\frac{1}{m-1}-\frac{N+1}{2}=\frac{N+1}{2}$.
For any $t\in(0,T)$, recalling condition \eqref{condt12}, we set
$$\tau_0=\min\Bigl\{c_1\bigl(\lambda\|u(t)\|_{L^1_\delta}+1\bigr)^{-2/(N+1)},\ts\frac{T-t}{2},(C(p)eK)^{-1}\Bigr\}.$$
By \eqref{smooth-eff} with $t_1=t$ and $t_2=t+\tau_0$,
recalling the definition \eqref{defvu} of $v$ and $v(t_2)\le \bar v(t_2)$, 
it follows that
$$\begin{aligned}
\lambda e^{-\mu\tau}\|u(t+\tau_0)\|_\infty 
&\le C_1\tau_0^{-\frac{N+1}{2}} \lambda\|u(t)\|_{L^1_\delta}
\le C\bigl(\lambda\|u(t)\|_{L^1_\delta}+1+(T-t)^{-\frac{N+1}{2}}+K^{\frac{N+1}{2}}\bigr)^2
\end{aligned}$$
where, here and below, $C$ is a generic constant depending only on $\Omega,p$.
Combining this with \eqref{eq:lowerblowupestimate},
we obtain
$$\lambda\|u(t)\|_{L^1_\delta}+1+(T-t)^{-\frac{N+1}{2}}+K^{\frac{N+1}{2}}
\ge C \lambda^{1/2}\|u(t+\tau)\|^{1/2}_\infty
\ge C \lambda^{1/2} \exp[c_0K^{-\beta}(T-t)^{-\beta}],$$
hence
$$\|u(t)\|_{L^1_\delta}
\ge CK^{-\frac{N+1}{2}}\exp[c_0K^{-\beta}(T-t)^{-\beta}]-K^{-N-1}\big(1+(T-t)^{-\frac{N+1}{2}}\big)-K^{\frac{N+1}{2}}.$$
Next using \eqref{eq:step2eigenargument}, we deduce that
$$\begin{aligned}
\int_0^t &\int_\Omega f(x,u(s,x))\delta(x) dxds\\
&\ge CK^{-\frac{N+1}{2}}\exp[c_0K^{-\beta}(T-t)^{-\beta}]-\bar CK^{-N-1}\big(1+(T-t)^{-\frac{N+1}{2}}\big)-\bar CK^{\frac{N+1}{2}}
-\|u_0\|_\infty,
\end{aligned}$$
which readily implies \eqref{eq:smoothingeffect}.

\smallskip

{\bf Step 4.} {\it Heat kernel estimate and proof of Theorem~\ref{cor:globalblowup0BIS2}(i), (ii1) and {second part of} (ii3).}
By the variation of constants formula, we have:
\be{varconstu}
u(t,x)\ge \int_0^t\int_\Omega G(t-s,x,z)f(z,u(s,z)) dzdt,
\ee
where $G$ denotes the Dirichlet heat kernel.
Recall the sharp heat kernel estimate from \cite{Zha02} (see also \cite{Cho}):
$$G(t,x,z)\ge C_1t^{-N/2}\min\Bigl({\delta(x)\delta(z)\over t},1\Bigr) e^{-C_2{|x-z|^2\over t}},
\quad t>0,\ x,z\in \Omega,$$
with $C_i=C_i(\Omega)$, hence in particular
\be{lowerG}
G(t,x,z)
\ge C_3e^{-C_2{|x-z|^2\over t}}\delta(x)\delta(z),\quad t\in(0,T),\ x,z\in \Omega,
\ee
 with $C_3=C_1T^{-{N}/2}\min(T^{-1},D^{-2})$ and $D=diam(\Omega)$.
Let $t\in((T-\frac12\tau)_+,T)$ and set $\theta=(T-t)/2$, hence $t-\theta\ge (T-\tau)_+$.

 {Let} $\omega\subset\Omega$ be open, assume 
$f(x,\cdot)=0$ for $x\in\Omega\setminus\omega$ and set $d=diam(\omega)$.
Using \eqref{eq:smoothingeffect}, it follows that, for all $x\in\omega$,
\begin{align*}
\frac{u(t,x)}{\delta(x)}
&\ge C_3\int_0^t\int_\omega e^{-C_2{|x-z|^2\over t-s}}f(z,u(s,z)) \delta(z)dzds \\
&\ge C_3e^{-{2C_2d^2\over T-t}}\int_0^{t-\theta}\int_\omega f(z,u(s,z)) \delta(z)dzds \\
&\ge CC_3K^{-\frac{N+1}{2}} 
\exp\big[c_0K^{-\beta}(T-t+\theta)^{-\beta}-2C_2d^2(T-t)^{-1}\big],
\end{align*}
hence
\be{fracudelta0}
\inf_{x\in \omega}\frac{u(t,x)}{\delta(x)}\ge CC_3K^{-\frac{N+1}{2}} 
\exp\big[\bar c_0K^{-\beta}(T-t)^{-\beta}-2C_2d^2(T-t)^{-1}\big]
\ee
with $\bar c_0=(\ts\frac32)^{-\beta}c_0$.

$\bullet$ First consider the case $p<2$, hence $\beta>1$. Taking $\tau=\tau(\Omega,p,K,\|u_0\|_\infty)>0$ possibly smaller, 
\eqref{fracudelta0} with $\omega=\Omega$ yields
\be{fracudelta}
\inf_{x\in \Omega}\frac{u(t,x)}{\delta(x)}
\ge CC_3K^{-\frac{N+1}{2}} \exp\big[\ts\frac12\bar c_0K^{-\beta}(T-t)^{-\beta}\big],\quad (T-\tau)_+\le t<T,
\ee
which proves Theorem~\ref{cor:globalblowup0BIS2}(i).

$\bullet$ Now consider the case $p=2$, hence $\beta=1$. Then \eqref{fracudelta0} with $\omega=\Omega$
still guarantees \eqref{fracudelta} provided $K\le {\bar K}(\Omega):=\bar c_0(4C_2)^{-1}D^{-2}$.
This proves Theorem~\ref{cor:globalblowup0BIS2}(ii1).
Finally, the conclusion of the second part of Theorem~\ref{cor:globalblowup0BIS2}(ii3) follows similarly 
from \eqref{fracudelta0} for $K\le \bar c_0(4C_2)^{-1}d^{-2}$.

\smallskip

{\bf Step 5.} {\it Proof of Theorem~\ref{cor:globalblowup0BIS2}(ii2) {and first part of (ii3)}.}
 Since now $(T-t)^{-\beta}=(T-t)^{-1}$ is of same order as the
singularity of the heat kernel  (and $K$ is not assumed to be small), we modify the argument as follows.
 Let $c_0,C_2$ be given by \eqref{eq:smoothingeffect} and \eqref{lowerG}, respectively.
Set $\rho=\frac14({c_0\over C_2K})^{1/2}{\strut=c_3(\Omega)K^{-1/2}}$ and cover $\overline\Omega$ by finitely many balls $B_i=B_\rho(x_i)$, $x_i\in \Omega$, $i\in\{1,\dots,n_0\}$. 

Next set 
$$h(s,z)=e^{-{8C_2\rho^2\over T-s}}f(z,u(s,z))\delta(z).$$ 
 By \eqref{eq:smoothingeffect}, for all $t\in((T-\tau)_+,T)$, we have
\be{inthsz}
\int_0^t \int_\Omega h(s,z) dzds\ge ce^{-{8C_2\rho^2\over T-t}+{c_0\over K(T-t)}}
\ge ce^{{c_0\over 2K(T-t)}},\quad (T-\tau)_+\le t<T,
\ee
with $c=C(\Omega,p)K^{-\frac{N+1}{2}}$.
For $t\in(T/2,T)$, set $\hat t=2t-T\in(0,t)$, so that $t-s\ge (T-s)/2$ for all $s\in (0,\hat t)$. 
 By \eqref{varconstu}, \eqref{lowerG}, for each $i\in\{1,\dots,n_0\}$, we then have
\be{infBiu}
\begin{aligned}
\inf_{x\in {\Omega\cap\strut}B_i}\frac{u(t,x)}{\delta(x)}
&\ge C_3\inf_{x\in {\Omega\cap\strut}B_i}\int_0^t \int_{\Omega\cap B_i} e^{-{C_2|x-z|^2\over t-s}}f(z,u(s,z)) \delta(z)dzds \\
&\ge C_3 \int_0^{\hat t} \int_{\Omega\cap B_i}h(s,z)dzds.
\end{aligned}
\ee
Now, by \eqref{inthsz}, we have $\int_0^T \int_\Omega h(s,z) dzds=\infty$. 
Consequently, there exists $i_0\in\{1,\dots,n_0\}$ such that
  $\int_0^T \int_{\Omega\cap B_{i_0}} h(s,z) dzds = \infty$.
Since $\hat t\to T$ as $t\to T$, it follows from \eqref{infBiu} that
\be{infBiu2}
\inf_{x\in {\Omega\cap\strut}B_{i_0}}\frac{u(t,x)}{\delta(x)}\to\infty, \quad\hbox{as $t\to T$.}
\ee
Taking a ball $B\subset\subset B_{i_0}\cap\Omega$, this implies the first part of Theorem~\ref{cor:globalblowup0BIS2}(ii2).

On the other hand,
 for each $t\in[T/2,T)$, there exists $i=i_1(t)\in \{1,\dots,n_0\}$ such that
\be{rhon0}
\int_0^{\hat t}\int_{\Omega\cap B_{i_1(t)}} h(s,z)dzds \ge\frac{1}{n_0}\int_0^{\hat t}\int_\Omega h(s,z)dzds.
\ee
Combining \eqref{infBiu} with $i=i_1(t)$, \eqref{rhon0} and \eqref{inthsz} (with $t$ replaced by $\hat t$),
and using $T-\hat t=2(T-t)$, we obtain
$$
\inf_{x\in {\Omega\cap\strut}B_{i_1(t)}}\frac{u(t,x)}{\delta(x)}
\ge \frac{cC_3}{n_0} e^{{c_0\over 4K(T-t)}},\quad \max(\ts\frac{T}{2},T-\frac12\tau)\le t<T.
$$
This implies \eqref{lowerBUreg2} with $x_0(t):=y_{i_1(t)}$.
\smallskip

The proof of the first part of Theorem~\ref{cor:globalblowup0BIS2}(ii3) follows the same arguments
{as for \eqref{infBiu2}}, by introducing a covering of $\overline\omega$ by finitely many balls $B_i=B_\rho(x_i)$, $x_i\in \omega$, $i\in\{1,\dots,n_0\}$ and by using that $f(x,\cdot)=0$ for $x\in\Omega\setminus\omega$.
\qed

\section{Upper estimates} 
\label{secupper}

\subsection{Results for more general nonlinearities}

We consider the problem
 \begin{equation}
	\label{eqfu3}
		\left\{
			\begin{array}{ll}
				 \partial_t u - \Delta u = g(u)1_\omega& \text{ in }  (0,T) \times \Omega,  
				\\
				u = 0 & \text{ on } (0,T)\times \partial \Omega, 
				\\
				u(0, \cdot) = u_0 & \text{ in } \Omega,
			\end{array}
		\right.
\end{equation}
with more general nonlinearities $g$.
In what follows we set 
$X:=\big\{\phi\in L^\infty(\Omega),\ \phi\ge 0\big\}$ and we make the convention 
\be{conv}
\hbox{$(T-t)^{-\beta}=0$ \ \ if $0<t<T=\infty$.}
\ee
Also, we denote by $\lambda_\omega$ the first eigenvalue of the Dirichlet Laplacian on $\omega$.

We have the following uniform {\it a priori} estimates up to $t=T$, valid for both global and nonglobal solutions,
with precise dependence of the constants upon the various parameters.
As a consequence, we obtain the continuous dependence of $T$ with respect to 
initial data and parameters, as well as information on 
 the blow-up set for $p\ge 2$.
 
We note that Theorems~\ref{cor:globalblowup0BIS}, \ref{cor:globalblowup0TER} and \ref{tm:cbt} are direct consequences of 
 Theorems~\ref{cor:globalblowup0BIS2}-\ref{pr:globalblowup0BIS3-0}. More precisely, the following result establishes the right hand side estimate of \eqref{twosidedBU},  the estimate \eqref{uppernew00} of Theorem \ref{cor:globalblowup0TER} and Theorem \ref{tm:cbt}.
We pay special attention to the dependence of the constants in the estimates or assertion (i)
 since it is essential for establishing the continuity of the existence time
 in assertion~(ii).

\goodbreak

\begin{thm}
\label{pr:globalblowup0BIS3-0}
Let $u_0$ satisfy \eqref{eq:positivedata} and assume
\be{hypglobalblowup0BIS3-0}
 p>1,\  q\in(1,1+\ts\frac{2}{N+2}),\  K>0,\ \bar K\ge 1, \  a\ge 2, \ s_0\ge 0,
 \ \hbox{$\omega\subset\Omega$ is a $C^2$ smooth domain}.
 \ee
Let $g\in C^1([0,\infty))$ with $g\ge 0$ satisfy
\be{deflocalized}
g(s)\ge Ks\log^p(a+s),\quad s \in  [s_0,\infty),
\ee
and
\be{deflocalized2}
g(s)\le \bar K(1+s^q),\quad s\in  [0,\infty).
\ee

\begin{itemize} 
\item[(i)] Let $u$ denote the maximal classical solution $u$ of problem \eqref{eqfu3},
with maximal existence time $T=T(u_0)\in(0,\infty]$. Then, for all $t\in(0,T)$, we have
\be{uppernew0}
\|u(t)\|_\infty
\le C_1e^{-\lambda t}\|u_0\|_\infty
+C_2\bar K^\gamma\Big(M_1+\exp\big[C_3K^{-\beta}(T-t)^{-\beta}\big]\Big)
\ee
and
\be{uppernew0delta}
\begin{aligned}
u(t,x)
&\le \Big\{C_1\Big(\|u_0\|_\infty  t^{-\frac12} 
+\bar K\|u_0\|_\infty^q\Big)e^{-\lambda t}\\
&\qquad\qquad +C_2\bar K^\gamma \Big(M_1^q+\exp\big[C_3K^{-\beta}(T-t)^{-\beta}\big]\Big)\Big\}
\delta(x)
\end{aligned}
\ee
where $C_1=C_1(\Omega,p,q)$, $C_2=C_2(\Omega,\omega,p,q)$, $C_3=C_3(N,p,q)$,
$\lambda=\lambda(\Omega)$, $\gamma=\gamma(N,q)$ 
%$C_2=C_2(K,p,q,\Omega,\omega,s_0)$, $C_3=C_3(p,q,\Omega,\omega)$ and , ?? 
 are positive constants, and
\be{uppernew0deltaM1}
M_1=s_0^\gamma+\exp\big(C_3\lambda_\omega^{1/p}K^{-1/p}\big)
 +\exp(C_2K^{-\beta}).
 \ee
 Moreover,  we may take
\be{uppernew0deltaC2}
C_2=C(\Omega,p,q)(1+r^{-\gamma})\quad\hbox{ if $\omega=B(x_0,r)$ with $r>0$.}
\ee

  %$M_1=M_0^{\nu}+\exp(C_3K^{-\beta}\tau^{-\beta})$
 
\item[(ii)]
For $\mu\ge 0$, denote by $T^*(\mu,u_0)\in(0,\infty]$ the 
existence time of the maximal classical solution of problem \eqref{eqfu3} with $g$ replaced by $\mu g$.

\begin{itemize} 
\item[(ii1)]  The function $T^*$ is continuous from $[0,\infty)\times L^\infty(\Omega)$ to $(0,\infty]$.

\item[(ii2)]  Assume $x_0\in\Omega$, $\omega=B(x_0,r)$.
Then the function $T^*=T^*(\mu,u_0,r)$ is continuous from $[0,\infty)\times L^\infty(\Omega)\times(0,\delta(x_0))$ to $(0,\infty]$.
\end{itemize}

\item[(iii)]
Assume $T<\infty$. 
\begin{itemize} 
\item[(iii1)] Assume $p=2$. Then 
	$$B(u_0)\subset \big\{x\ ;\, {\rm dist}(x,\omega)\le C_2K^{-1/2}\big\},$$
	with $C_2=C_5(N,p,q)>0$.
In particular, if $\omega\ne \Omega$ and $K>C_0^2\, (\sup_{x \in \Omega} {\rm dist}(x, \omega))^{-2}$,
then blow-up is only regional.

\item[(iii2)] If $p>2$, then $B(u_0)\subset \bar \omega$.
\end{itemize} 
\end{itemize} 
\end{thm}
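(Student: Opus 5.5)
The proof rests on the a priori estimate (i), which is established first; (ii) and (iii) are then derived from it.

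\textbf{Step 1: $L^1$ blow-up bound on the localization region.} Let $\varphi=\varphi_\omega$ be the first Dirichlet eigenfunction of $\omega$ and use $\psi=\varphi^2$ as a test function on $\omega$, as announced in Remark~\ref{ideaspf} (Lemma~\ref{lmeigenf}). Set $y(t)=\int_\omega u\psi$. Testing gives
$$y'=\int_\omega u\Delta\psi+\int_{\partial\omega}(\ldots)+\int_\omega g(u)\psi,$$
and the boundary terms carry a favorable sign because $\psi$ vanishes to second order on $\partial\omega$ while $u\ge0$. Write $\Delta\psi=2|\nabla\varphi|^2-2\lambda_\omega\varphi^2$. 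The term $\int u|\nabla\varphi|^2$ is the delicate one. I will absorb it through a Young-type inequality, $u|\nabla\varphi|^2\le \varepsilon\, u\log^p(a+u)\varphi^2+\ldots$, on the set where $\varphi$ is small relative to $u$. This should give $y'\ge c K\, y\log^p(a+y)-C(\lambda_\omega,K,s_0)$, using the lower growth \eqref{deflocalized} and Jensen's inequality, since $s\log^p(a+s)$ is convex. Integrating this differential inequality backward from the blow-up time gives $y(t)\le \exp[CK^{-\beta}(T-t)^{-\beta}]+M_1$. The same inequality also implies $y(t)\le M_1$ uniformly when $T=\infty$, which is where convention \eqref{conv} enters. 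Bounds on $\int_\omega g(u)\psi$ integrated on $(t,T')$ follow in the same way.

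\textbf{Step 2: $L^1$--$L^\infty$ bootstrap.} Outside $\omega$, $u$ solves the heat equation, so the real issue is $\|u\|_{L^\infty(\omega')}$ near $\partial\omega$ and the passage from weighted $L^1$ to $L^\infty$. Because $q<1+2/(N+2)$ is below the Fujita-type threshold for the $L^1$ smoothing, the upper bound \eqref{deflocalized2} allows an interpolation and bootstrap argument. On a window $[t-\tau,t]$ with $\tau\sim (T-t)$, the variation of constants formula and heat kernel bounds give
$$\|u(t)\|_\infty\le C\tau^{-N/2}\|u(t-\tau)\|_{L^1}+\bar K\int_{t-\tau}^t\|e^{(t-s)\Delta}(1+u^q)\|_\infty\,ds.$$
The nonlinear term is handled by interpolating $\|u^q\|_{L^r}$ between $L^1$ and $L^\infty$ and closing with a continuity argument. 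Powers of $\tau^{-1}$ and of $\bar K$ are absorbed into $\exp[C_3K^{-\beta}(T-t)^{-\beta}]$ and $\bar K^\gamma$, since polynomial factors are dominated by the exponential.

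To obtain a global $L^1$ bound I first need to pass from $\int_\omega u\varphi^2$ to $\int_\Omega u\delta$. For this I control $\int_0^t\int_\omega g(u)\psi$ via Step 1 and use interior ball coverings, so that $\omega$ interior points are controlled. Near $\partial\omega$, where $\psi$ degenerates, I plan to use a slightly larger $C^2$ domain $\tilde\omega\supset\omega$ and run Step 1 on $\tilde\omega$; here $g\,1_\omega\le g\,1_{\tilde\omega}$ suffices for the upper estimate, though the lower growth is only on $\omega$, so the Jensen step needs care. This is the main obstacle.

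The $\delta(x)$ factor in \eqref{uppernew0delta} then comes from the Dirichlet heat kernel bound $G\le C\delta(x)t^{-(N+1)/2}$, together with a short-time term $t^{-1/2}\|u_0\|_\infty$.

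\textbf{Step 3: (ii) and (iii).} For (ii), lower semicontinuity of $T^*$ is standard from local well-posedness. For upper semicontinuity, suppose $\mu_n\to\mu$, $u_{0,n}\to u_0$ and $T^*_n>T^*+\eta$. Estimate \eqref{uppernew0}, whose constants are uniform in $n$ (hence the care about their dependence, including \eqref{uppernew0deltaC2} for the radius $r$), bounds $u_n$ on $[0,T^*+\eta/2]$ in $L^\infty$ by a finite constant. Passing to the limit contradicts the blow-up of $u$ at $T^*$. The case $\mu=0$ is excluded by $T^*=\infty$ and is handled with the same bound, since $M_1$ stays controlled.

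For (iii), at a point $x$ with ${\rm dist}(x,\omega)\ge d$, write $u$ locally as caloric on $B(x,d)$. Representation on that ball with the Gaussian kernel gives $u(t,x)\lesssim \sup\|u\|_\infty\, e^{-cd^2/(T-t)}$. By \eqref{uppernew0} the supremum is at most $\exp[CK^{-\beta}(T-t)^{-\beta}]$. This remains bounded if $p>2$ ($\beta<1$) for every $d>0$, and if $p=2$ for $d\ge CK^{-1/2}$.
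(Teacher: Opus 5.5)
There is a genuine gap in part (i), and it is the point you flag yourself as ``the main obstacle'': you never obtain an $L^1$ bound on $u$ that reaches $\partial\omega$, and the route you sketch cannot give one.

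You have the role of the $\varphi^2$ test function backwards. In $y'=\int_\omega u\,\Delta\psi+\int_\omega g(u)\psi$ the boundary terms vanish, and the term $2\int_\omega u|\nabla\varphi|^2$ is nonnegative. So it is harmless for a lower bound on $y'$, and there is nothing to absorb. Running the ODE blow-up argument on $\int_\omega u\varphi^2$ only controls a weight that degenerates quadratically at $\partial\omega$, which is worse than $\int_\omega u\varphi_\omega$.

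The missing idea is to integrate the $\varphi_\omega^2$ identity in time and rearrange it, not to use it as a differential inequality. With $\psi(t)=\frac12\int_\omega u(t)\varphi_\omega^2$ one has
\[
\int_{(t-1)_+}^t\!\int_\omega u\,|\nabla\varphi_\omega|^2=\psi(t)-\psi\bigl((t-1)_+\bigr)+\lambda_\omega\int_{(t-1)_+}^t\!\int_\omega u\,\varphi_\omega^2-\frac12\int_{(t-1)_+}^t\!\int_\omega g(u)\,\varphi_\omega^2 .
\]
Then proceed as follows:
\begin{enumerate}
\item Drop the last term by its sign.
\item Bound the remaining right-hand side by the fixed-time estimate $\int_\omega u\varphi_\omega\le M_0+H_1^{-1}(\tfrac K2(T-t))$. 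That estimate comes from the $\varphi_\omega$ test function, via Jensen and the ODE argument.
\item Use Hopf's lemma, $\inf_\omega(|\nabla\varphi_\omega|^2+\varphi_\omega)>0$. This gives the unweighted, time-integrated bound $\int_{(t-1)_+}^t\int_\omega u\le C_\omega[M_0+CH_1^{-1}(\tfrac K2(T-t))]$.
\end{enumerate}
So the gradient term is precisely what supplies control near $\partial\omega$.

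Your substitute of enlarging $\omega$ to $\tilde\omega$ fails. On $\tilde\omega\setminus\omega$ the source vanishes, so the Jensen/ODE lower bound breaks down. Comparing instead with the problem with source $g1_{\tilde\omega}$ gives a supersolution that blows up at some $\tilde T\le T$, which says nothing on $[\tilde T,T)$.

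Your Step 2 has a second gap: it needs a fixed-time bound on $\|u(t-\tau)\|_{L^1(\Omega)}$, which you do not have. Obtaining it would essentially require controlling the unweighted source near $\partial\omega$. The paper avoids this entirely, because the source is supported in $\omega$:
\begin{enumerate}
\item The linear part is bounded by $C_0e^{-\mu(t-t_1)}\|u(t_1)\|_\infty$.
\item For the nonlinear part, write $q=1+\theta-\eps$ with $\theta<2/(N+2)$ and interpolate: $\|u^q1_\omega\|_{1/\theta}\le\|u\|_\infty^{1-\eps}\|u\|_{L^1(\omega)}^\theta$.
\item H\"older in time then makes the time-integrated $L^1(\omega)$ bound sufficient. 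This is exactly where $q<1+\frac{2}{N+2}$ enters.
\item Young's inequality absorbs $U^{1-\eps}$, with $U=\sup_{[t_1,t_2]}e^{\mu s}\|u(s)\|_\infty$.
\item Iterating over windows of fixed length $\tau(\Omega)$ produces the term $C_1e^{-\lambda t}\|u_0\|_\infty$ and uniformity for global solutions. A window $\tau\sim T-t$ makes no sense when $T=\infty$.
\end{enumerate}

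Your argument for (ii) is the paper's. Your remark that $M_1$ stays controlled as $\mu\to0$ is wrong, since $M_1$ contains $\exp(C(\mu K)^{-\beta})$. It is also unnecessary: at $\mu=0$ one has $T^*=\infty$, and only lower semicontinuity is involved.

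Your local argument for (iii) can be made to work. The paper's version is simpler: the global Duhamel formula with $G(t-s,x,y)\le Cd^{-N}e^{-d^2/(32(T-s))}$ for $y\in\omega$. Both (ii) and (iii), however, rest on (i), which your proposal does not establish.
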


\subsection{Proofs}

In view of the proof of Theorem~\ref{pr:globalblowup0BIS3-0}, we prepare the following two lemmas, where
$\varphi_\omega$ denotes the first eigenfunction of the Dirichlet Laplacian on $\omega$ 
normalized by $\int_{\omega} \varphi_\omega = 1$ and
\be{eq:defloceigen}
\phi(t) = \int_{\omega} u(t,x) \varphi_\omega(x) dx,\qquad t \geq 0.
\ee
Recall that if $\omega=B(x_0,r)$, then
\be{lambda1r}
\lambda_\omega=c_Nr^{-2},\quad \varphi_\omega(x)=r^{-N}\varphi_0((x-x_0)/r)
\ee
where $\varphi_0$ is the first eigenfunction of the unit ball.

\begin{lm} \label{lmeigenf}
Let $p>1$, $K>0$ and $\omega\subset\Omega$ be a $C^2$ smooth domain,
and let $g\in C^1([0,\infty))$ satisfy
\eqref{deflocalized} for some $s_0\ge 0$ and $a\ge 2$.
Consider problem \eqref{eqfu3} where $u_0$ satisfies \eqref{eq:positivedata} and $T=T(u_0)\in(0,\infty]$,
and recall~\eqref{conv}.
\begin{itemize}
\item[(i)]
 Let $H_1$ be defined by \eqref{defHODE}.
%For each $K_0>0$, 
We have
%There exists $M_0=M_0(K,p,\omega,s_0)>0$ such that, for all $K\ge K_0$ and all 
\be{intvarphi1}
 \phi(t) =  \int_{\omega} u(t,x) \varphi_\omega(x) dx\le 
 M_0+H_1^{-1}\bigl(\textstyle\frac{K}{2}  (T-t)\bigr),\quad t\in [0,T),
 \ee
where 
 %\be{lambda1M1r}
$M_0=s_0+\exp\big[\big(2K^{-1}\lambda_\omega\big)^{1/p}\big]$.

\item[(ii)]
If $T<\infty$ then
\be{intvarphi1b}
 \int_{(t-1)_+}^t \int_{\omega} u(s,x)\, dxds 
\le  C_\omega\Big[M_0+CH_1^{-1}\bigl(\textstyle\frac{K}{2}  (T-t)\bigr)\Big],\quad t\in (0,T),
\ee
where the constant $C_\omega>0$ depends only on $\omega$.
Moreover,  if $\omega=B(x_0,r)$ with $r\in(0,1]$, then we may take $C_\omega=C(N)r^{-2}$.
%\be{choiceM0r}
%\tilde M_0=r^{-2}\big[s_0+\exp\big(\big(2c_nK_0^{-1}\big)^{1/p}r^{-2/p}\big)\big],\quad C(\omega)=C(n)r^{-2}.}

\end{itemize}
\end{lm}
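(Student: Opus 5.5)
Part (i) is a Kaplan-type eigenfunction argument localized to $\omega$. Testing \eqref{eqfu3} with $\varphi_\omega$ on $\omega$ and integrating by parts, using $u\ge0$ on $\partial\omega$ and $\partial_\nu\varphi_\omega\le 0$ there, gives the differential inequality
$$\phi'(t)\ \ge\ -\lambda_\omega\phi(t)+\int_\omega g(u)\varphi_\omega\,dx ,$$
since $\int_\omega \varphi_\omega\Delta u=\int_\omega u\Delta\varphi_\omega-\int_{\partial\omega}u\,\partial_\nu\varphi_\omega\ge -\lambda_\omega\phi$. To control $\int_\omega g(u)\varphi_\omega$ from below by $\phi$, I will use a convex minorant of $g$. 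Set $F(s)=K(s-s_0)_+\log^p(a+(s-s_0)_+)$. This function is convex and nondecreasing, and \eqref{deflocalized} together with $g\ge0$ gives $g\ge F$. Jensen's inequality (recall $\int_\omega\varphi_\omega=1$) then yields $\phi'\ge -\lambda_\omega\phi+F(\phi)$.

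Next, once $\phi\ge M_0=s_0+\exp[(2\lambda_\omega/K)^{1/p}]$, the logarithm dominates. Writing $\psi=\phi-s_0$, we have $\log^p(a+\psi)\ge 2\lambda_\omega/K$, so $\lambda_\omega\phi\le \frac12 F(\phi)+\lambda_\omega s_0$. The term $\lambda_\omega s_0$ also needs handling; I will either absorb it by enlarging the threshold, or rewrite the inequality as $\psi'\ge -\lambda_\omega\psi+F(\psi+s_0)-\lambda_\omega s_0$ and treat it there. In any case this gives $\psi'\ge \frac K2\psi\log^p(a+\psi)$ whenever $\psi\ge \exp[(2\lambda_\omega/K)^{1/p}]$. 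In particular the set $\{\psi\ge\text{threshold}\}$ is forward invariant. Suppose that at some $t$ we have $\psi(t)>H_1^{-1}(\frac K2(T-t))$ together with $\psi(t)$ above the threshold. Integrating the ODE inequality, $\int_{\psi(t)}^\infty ds/(\frac K2 s\log^p(a+s))=\frac{2}{K}H_1(\psi(t))<T-t$ (with $H_1$ built on the same $a$), so $\phi$ would blow up before $T$. This contradicts the boundedness of $u$ on $[0,t']$ for every $t'<T$. If $T=\infty$, the same argument shows $\phi$ stays below the threshold. This gives \eqref{intvarphi1}, possibly after a harmless adjustment of $M_0$.

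Part (ii) upgrades the weighted bound to an unweighted $L^1$ bound near $\partial\omega$, following the hint in Remark~\ref{ideaspf} to use $\varphi_\omega^2$. I will test with $\varphi_\omega$ and integrate the exact identity $\phi'+\lambda_\omega\phi-\int_{\partial\omega}u\,|\partial_\nu\varphi_\omega| =\int_\omega g(u)\varphi_\omega$ over $[(t-1)_+,t]$. This bounds $\int\!\!\int g(u)\varphi_\omega$ and the boundary flux by $\phi(t)+\lambda_\omega\int\phi$, hence by \eqref{intvarphi1} (using that $H_1^{-1}(\frac K2(T-s))$ is increasing in $s$). Then I test with $\varphi_\omega^2$, which vanishes on $\partial\omega$ to second order, so no boundary term appears. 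Since $\Delta(\varphi_\omega^2)=-2\lambda_\omega\varphi_\omega^2+2|\nabla\varphi_\omega|^2$, this gives
$$\int\!\!\int 2u|\nabla\varphi_\omega|^2 = \Big[\int u\varphi_\omega^2\Big]-\int\!\!\int g(u)\varphi_\omega^2+2\lambda_\omega\int\!\!\int u\varphi_\omega^2 .$$
The $g$-term has a favourable sign, and both $\int u\varphi_\omega^2$ and $\int\!\!\int u\varphi_\omega^2$ are at most $\|\varphi_\omega\|_\infty$ times the $\phi$-integrals. Hopf's lemma gives $\varphi_\omega^2+|\nabla\varphi_\omega|^2\ge c_\omega>0$ on $\bar\omega$: near $\partial\omega$ the gradient is nonzero, and inside $\varphi_\omega$ is positive. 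Hence $\int\!\!\int_\omega u\le c_\omega^{-1}\int\!\!\int u(\varphi_\omega^2+|\nabla\varphi_\omega|^2)$, and \eqref{intvarphi1b} follows. For the ball $B(x_0,r)$ I will track the scaling \eqref{lambda1r}: $\varphi_\omega^2\sim r^{-2N}$ and $|\nabla\varphi_\omega|^2\sim r^{-2N-2}$. Dividing by the lower bound $c\,r^{-2N-2}$ and using $r\le1$ should produce $C(N)r^{-2}$.

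The main obstacle is this last bookkeeping: the normalization $\int\varphi_\omega=1$ makes $\|\varphi_\omega\|_\infty\sim r^{-N}$, and I must check that the ratio of the upper constants to $c_\omega$ really is $O(r^{-2})$ and not worse. Handling the $s_0$ shift cleanly in (i) is the secondary technical point.
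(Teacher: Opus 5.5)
Your approach is the paper's. For (i) you test \eqref{eqfu3} with $\varphi_\omega$ and use Jensen plus an ODE comparison. For (ii) you test with $\varphi_\omega^2$ (no boundary terms), discard the $g$-term, and bound the weight from below by Hopf's lemma. Two points need correcting.

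\textbf{The constant $M_0$ in (i).} You claim that ``in any case'' $\psi'\ge\frac K2\psi\log^p(a+\psi)$ holds once $\psi\ge\exp[(2\lambda_\omega/K)^{1/p}]$. This is false when $s_0>0$. At that threshold you only get $\psi'\ge\frac K2\psi\log^p(a+\psi)-\lambda_\omega s_0$. That inequality gives neither forward invariance nor the comparison with $H_1$, and for $T=\infty$ it does not show $\phi\le M_0$.

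You have to raise the threshold, e.g.\ to $\max\{s_0,\exp[(4\lambda_\omega/K)^{1/p}]\}$. Beyond it, $K\psi\log^p(a+\psi)\ge 4\lambda_\omega\psi\ge 2\lambda_\omega(\psi+s_0)$, and the argument closes. What you then prove is \eqref{intvarphi1} with $\tilde M_0=s_0+\max\{s_0,\exp[(4\lambda_\omega/K)^{1/p}]\}$. The stated $M_0$ is recovered only for $s_0=0$.

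This is not a weakness of your method relative to the paper's. The paper reaches the stated $M_0$ by applying Jensen to $h(s)=1_{\{s\ge s_0\}}s\log^p(2+s)$ and invoking its convexity. For $s_0>0$ this $h$ jumps at $s_0$ and is not convex, so that step is only justified when $s_0=0$. Your shifted minorant $F(s)=K(s-s_0)_+\log^p(a+(s-s_0)_+)$ is the correct way to run the Jensen step.

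The enlarged constant depends on $s_0,\lambda_\omega,K$ in the same way as $M_0$. It therefore only changes the constants in $M_1$ in Theorem~\ref{pr:globalblowup0BIS3-0} and leaves the continuity argument intact. Moreover, $s_0=0$ for the model and control nonlinearities.

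\textbf{The scaling in (ii).} On $B(x_0,r)$ the infimum of $\varphi_\omega^2+|\nabla\varphi_\omega|^2$ is of order $r^{-2N}$, not $r^{-2N-2}$. The reason is that $\nabla\varphi_\omega$ vanishes at $x_0$, where the weight equals $r^{-2N}\varphi_0(0)^2$.

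Use the correct lower bound $c(N)r^{-2N}$ (for $r\le 1$) together with your upper constant $\|\varphi_\omega\|_\infty(2+2\lambda_\omega)\le C(N)r^{-N-2}$. This gives $C_\omega=C(N)r^{N-2}\le C(N)r^{-2}$, so the conclusion stands. The paper instead uses the weight $|\nabla\varphi_\omega|^2+\varphi_\omega$, whose infimum is at least $c(N)r^{-N}$, and gets $r^{-2}$ directly.

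Finally, your preliminary test with $\varphi_\omega$, which bounds $\iint g(u)\varphi_\omega$ and the boundary flux, is never used. The $g$-term is simply discarded in the $\varphi_\omega^2$ identity, so you can omit that step.
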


Our second lemma guarantees the smallness of the blowup time
{when the nonlinearity $g$ is multiplied by a large coefficient
 and/or when $\omega$ is small. We especially consider the case when the RHS  in \eqref{eqfu3} is replaced by $r^{-2}g(u)1_{B(x_0,r)}$ with $r>0$ small,}
which will be important for the proof of our controllability results.

\begin{lm} \label{lmeigenf2}
Under the assumptions of Lemma~\ref{lmeigenf},
for $\mu\ge 0$, we denote by $T^*(\mu,u_0)\in(0,\infty]$, or $T^*(\mu,u_0,\omega)$, the 
existence time of the maximal classical solution of problem \eqref{eqfu3} with $g$ replaced by~$\mu g$.

 \begin{itemize}
\item[(i)]
If $\phi(0)>s_0$,
then $\lim_{\mu\to\infty} T^*(\mu,u_0)=0$. 

\item[(ii)]Assume $s_0=0$, $K\log^p a\ge C(N)>0$ sufficiently large and $u_0\ge\eta$ a.e.~in $B(x_0,r_0)$ for some $\eta,r_0>0$
and $x_0\in\Omega$.
Then
$\lim_{r\to 0}T^*(r^{-2},u_0,B(x_0,r))=0$.
\end{itemize}
\end{lm}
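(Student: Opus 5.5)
Both parts rest on the eigenfunction ODE argument on the subdomain. Let $\varphi$ be the first Dirichlet eigenfunction of the relevant subdomain ($\omega$, or $B(x_0,r)$), normalized with unit integral, and set $\phi(t)=\int u(t)\varphi$. Since $\varphi$ vanishes on the boundary of the subdomain and $u\ge0$, Green's formula gives $\int_{\omega}\Delta u\,\varphi \ge -\lambda_\omega\phi$, because the boundary term $-\int_{\partial\omega}u\,\partial_\nu\varphi$ is nonnegative. Combining this with $g(u)\ge Ku\log^p(a+u)$ for $u\ge s_0$, the lower bound $g\ge0$, and Jensen's inequality (the map $s\mapsto s\log^p(a+s)$ is convex), I obtain the differential inequality
\[
\phi'\ge \mu K\,\phi\log^p(a+\phi)-\lambda_\omega\phi-C(\mu,s_0).
\]
This is the same computation as in Lemma~\ref{lmeigenf}. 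The only care needed is that $g$ is controlled only for $s\ge s_0$, so on $\{u<s_0\}$ I replace $g$ by $0$ and absorb the resulting error. Concretely, I use $g(s)\ge K s\log^p(a+s)-Ks_0\log^p(a+s_0)$ for all $s\ge0$, which follows from $g\ge0$ together with the lower bound on $[s_0,\infty)$.

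\textbf{Part (i).} Here the subdomain is fixed and the inequality reads $\phi'\ge\mu\bigl(K\phi\log^p(a+\phi)-Ks_0\log^p(a+s_0)\bigr)-\lambda_\omega\phi$. Since $\phi(0)>s_0$, the bracket is positive at $t=0$, say equal to $2\eta>0$. For $\mu$ large, $\mu\eta>\lambda_\omega\phi$ near $\phi(0)$, so $\phi$ is increasing and stays in the region where the bracket dominates. I then compare $\phi$ with the ODE $\psi'=\tfrac{\mu}{2}\bigl(K\psi\log^p(a+\psi)-Ks_0\log^p(a+s_0)\bigr)$, started at $\psi(0)=\phi(0)$. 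Its blow-up time is
\[
\frac{2}{\mu}\int_{\phi(0)}^\infty\frac{ds}{K s\log^p(a+s)-Ks_0\log^p(a+s_0)},
\]
which is finite because $p>1$ and the integrand is regular near $\phi(0)>s_0$. This time is $O(1/\mu)$. Since $\phi\le\|u\|_\infty$, the solution $u$ blows up no later than $\psi$, so $T^*\le C/\mu\to0$.

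\textbf{Part (ii).} Here $s_0=0$, the coefficient is $\mu=r^{-2}$ and $\lambda_{B_r}=c_Nr^{-2}$. For $r<r_0$, since $u_0\ge\eta$ on $B(x_0,r_0)$, I get $\phi(0)\ge\eta$ uniformly in $r$. The differential inequality becomes
\[
\phi'\ge r^{-2}\phi\bigl(K\log^p(a+\phi)-c_N\bigr).
\]
Because $K\log^p a\ge C(N):=2c_N$, the bracket is at least $\tfrac{K}{2}\log^p(a+\phi)$ for every $\phi\ge0$. Hence $\phi'\ge \tfrac{K}{2}r^{-2}\phi\log^p(a+\phi)$, and comparison yields
\[
T^*\le 2r^2K^{-1}\int_\eta^\infty\frac{ds}{s\log^p(a+s)}\longrightarrow0 \quad\text{as } r\to0.
\]

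The main obstacle is justifying the computation rigorously. Two points need attention. First, the solution is only $L^\infty$ at $t=0$, so I would work with the integrated (mild) form, or apply the argument on $[\epsilon,T)$ and use continuity of $\phi$ at $0$. Second, the boundary-term sign in Green's formula requires $\varphi$ to vanish on the boundary of the subdomain with $\partial_\nu\varphi\le0$, which holds here because $\omega$ is $C^2$ (and $B(x_0,r)$ is smooth).
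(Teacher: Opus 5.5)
Your proof is correct and follows the paper's route: test against the normalized first eigenfunction of the subdomain, use Green's formula with $\partial_\nu\varphi\le 0$ and Jensen to get a superlinear differential inequality for $\phi$, then integrate it to get $T^*=O(1/\mu)$ in (i) and $T^*\le 2r^2K^{-1}\int_\eta^\infty ds/(s\log^p(a+s))$ in (ii), which is exactly the paper's argument for (ii). Your convex minorant $F(s)=Ks\log^p(a+s)-Ks_0\log^p(a+s_0)$ justifies the Jensen step more cleanly than the paper's $1_{\{s\ge s_0\}}s\log^p(2+s)$, which is not convex when $s_0>0$; in (i) you should also state explicitly that $F(s)/s$ is nondecreasing, so that $\tfrac{\mu}{2}F(\phi)\ge\lambda_\omega\phi$ holds for all $\phi\ge\phi(0)$ once $\mu\ge\lambda_\omega\phi(0)/\eta$.
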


The proof of Lemma~\ref{lmeigenf} is based on using
the test-functions $\varphi_\omega$ and $\varphi_\omega^2$ on the equation.

\begin{proof} (i) 
By \eqref{deflocalized}, we have
$$g\ge Kh\ \ \hbox{on $[0,\infty)$, \quad where } h(s):=1_{\{s\ge s_0\}}s\log^p(2+s),\ \ s\ge 0.$$
%Fix $M_0=M_0(K_0,p,\omega,s_0)\ge s_0$ such that 
Since 
%\be{hypfgK0M0}
$K\log^p(1+M_0)>2\lambda_\omega$,
we have
	\be{hypfg}
	g(s)\ge Kh(s)\ge 2\lambda_\omega s\ \hbox{  for all $s\ge M_0$. }
	\ee
Since $\omega$ is $C^2$, we have $\varphi_\omega\in W^{2,m}(\omega)$ for all finite $m$. 
Integrating by parts and using $u\ge 0$, $\varphi_\omega=0$ and $\partial_\nu\varphi_\omega\le 0$ on $\partial\omega$, 
Jensen's inequality and the convexity of $h$,
we then obtain
\be{intvarphi1a}
\begin{aligned}
\phi'(t) 
&= \int_{\omega}u_t\varphi_\omega dx \strut \ge \int_{\omega} \varphi_\omega \Delta u dx + K\int_{\omega} h(u) \varphi_\omega dx \\
&= \int_{\omega} u\Delta\varphi_\omega dx +
 \int_{\partial\omega} \bigl[ \varphi_\omega\partial_\nu u -u\partial_\nu\varphi_\omega\bigr]d\sigma + K\int_{\omega} h(u) \varphi_\omega dx \\
& \geq - \lambda_\omega \phi(t) + K\int_{\omega} h(u) \varphi_\omega dx
\geq - \lambda_\omega \phi(t) + Kh(\phi(t)).
\end{aligned}
\ee
By \eqref{hypfg}, it follows that, for any $t\in(0,T)$,
\be{ineqphi}
\phi(t) \geq M_0\Longrightarrow \phi'(t) \geq \frac{K}{2} h(\phi(t))\Longrightarrow \phi'\geq \frac{K}{2} h(\phi) \hbox{ on $[t,T)$.}
\ee
Consequently, there exists $t_0\in[0,T]$ such that $\phi(t)\le M_0$ in $[0,t_0)$ and $\phi(t)>M_0$ in $(t_0,T)$.
In particular, \eqref{intvarphi1} is true for $t\in [0,t_0)$.
If $t_0<T$, 
by \eqref{ineqphi}, we have
\be{ineqphi2}
 \frac{K(\tau-t)}{2} \leq \int_t^\tau \frac{\phi'(s)}{h(\phi(s))} ds = \int_{\phi(t)}^{\phi(\tau)} \frac{dz}{h(z)} \leq H_1(\phi(t)),
\quad t_0\le t<\tau<T,
\ee
hence in particular $T<\infty$.
Letting $\tau\to T$ and recalling that $H_1$ is decreasing, we deduce that
$$ \int_{\omega} u(t,x) \varphi_\omega(x) dx=\phi(t)\le H_1^{-1}\bigl(\textstyle\frac{K}{2} (T-t)\bigr),\quad t_0\le t<T.$$
Therefore \eqref{intvarphi1} is also true for $t\in [t_0,T)$.

%\bleu{For further reference, we note that when $\omega=B_r(x_0)$, in view of \eqref{lambda1r}, may take}
%\be{lambda1M1r}
%M_0=s_0+\exp\Big[\big(2c_nK^{-1}\big)^{1/p}r^{-2/p}\Big].
%\ee
 
\smallskip

(ii) Set
\be{eq:defloceigen2}
 \varphi=\ts\frac12\varphi_\omega^2,\qquad \psi(t) = \displaystyle\int_{\omega} u(t,x) \varphi(x) dx,\qquad t \geq 0.
\ee
Using $\varphi=\partial_\nu \varphi=0$ on $\partial\omega$, and integrating by parts, we get
$$
\psi'(t) = \int_{\omega}u_t\varphi dx = \int_{\omega} u \Delta \varphi dx + \int_{\omega} f_K(u) \varphi dx.$$
Since $\Delta \varphi=|\nabla\varphi_\omega|^2-\lambda_\omega\varphi_\omega^2$, by integrating in time and using \eqref{intvarphi1} 
and the boundedness of $\varphi_\omega$, we obtain
$$
\int_{(t-1)_+}^t \int_{\omega} u|\nabla\varphi_\omega|^2 dxds 
=\psi(t)-\psi((t-1)_+)+\lambda_\omega \int_{(t-1)_+}^t  \int_{\omega}u \varphi_\omega^2 dx ds- \int_{(t-1)_+}^t \int_{\omega} f_K(u) \varphi dxds
$$
hence
$$
\begin{aligned}
 \int_{(t-1)_+}^t \int_{\omega} u(|\nabla\varphi_\omega|^2+\varphi_\omega) dxds 
&\le\psi(t)+ \int_{(t-1)_+}^t  \int_{\omega}u (\lambda_\omega\varphi_\omega^2+\varphi_\omega) dx ds\\
&\le (1+\lambda_\omega)(1+\|\varphi_\omega\|_\infty) \Big(M_0+%C(\omega)
H_1^{-1}\bigl(\textstyle\frac{K}{2}  (T-t)\bigr)\Big).
\end{aligned}$$
%(for a possibly larger $M_0$ with same dependence).
Observing that $\tilde c_\omega:=\inf_\omega (|\nabla\varphi_\omega|^2+\varphi_\omega)>0$
owing to Hopf's Lemma, \eqref{intvarphi1b} follows
from the last inequality, 
with $C_\omega=(1+\lambda_\omega)(1+\|\varphi_\omega\|_\infty)\tilde c_\omega^{-1}$.
 % combined with \eqref{intvarphi1}.
Moreover, if $\omega=B(x_0,r)$ with $r\in(0,1]$, using \eqref{lambda1r}, we see that we may take 
$\tilde c_\omega=c(N)r^{-N}$ and then $C_\omega=C(N)r^{-2}$.
\end{proof}

\begin{proof}[Proof of Lemma~\ref{lmeigenf2}]
(i) By \eqref{intvarphi1a}, for all $t\in(0,T)$, we have
\be{phiKphi00}
\phi(t)\ge s_0\Longrightarrow \phi'(t) \ge \big[\mu K\log^p(a+\phi(t))- \lambda_\omega\big] \phi(t),
\ee
where $a\ge 2$. Fix $\mu_1\ge 1$ such that $\mu_1 K\log^p2\ge 2\lambda_\omega$.
For any $\mu\ge \mu_1$, since $\phi(0)>s_0\ge 0$, it follows from \eqref{phiKphi00} 
that $\phi'\ge \frac12 \mu K(\log^p2)\phi(t)$ on $(0,T)$, hence
\be{phiKphi0}
\phi(t)\ge e^{\frac12 \mu K(\log^p2)t}\phi(0).
\ee
Fix $\eps>0$ and let $\mu\ge \mu_1$ be large enough so that
$\mu K\eps>2H_1(\phi(0))$ 
and $e^{\frac12 \mu K(\log^p2)\eps}\phi(0)>M_0$.
%where $M_1:=M_0(K_0,p,\omega,s_0)$ is given by Lemma~\ref{lmeigenf}(i) with $K_0:=1$. 
Then $\phi(\eps)\ge M_0$ by \eqref{phiKphi0}, and 
 it follows from  \eqref{ineqphi} and \eqref{ineqphi2} with $t_0=\eps$ that
$T(\mu,u_0)-\eps \le 2(\mu K)^{-1}H_1(\phi(\varepsilon))\le 2(\mu K)^{-1}H_1(\phi(0))\le \eps$. 
 The conclusion follows. 
 
  \smallskip
%By \eqref{phiKphi00} with $K=\tilde Kr^{-2}$
(ii) We now denote $\phi=\phi_r$ to emphasize the dependence on $r$ in \eqref{eq:defloceigen}.
Recall \eqref{lambda1r} and assume that $K\log^p a\ge 2c_N$, hence
$Kr^{-2}\log^p(2+\tilde s_0)\ge 2\lambda_\omega$.
It follows from \eqref{phiKphi00} with $s_0=0$ that
\be{phirprime}
\phi_r'(t) \ge \frac12 Kr^{-2}\phi_r(t) \log^p(a+\phi_r(t))\ \hbox{ on $(0,T^*)$.}
\ee
Also, owing to our assumption on $u_0$, for $r\in(0,r_0)$, we have $\phi_r(0)\ge \eta\int_{B_r}\varphi_{B_r}=\eta$.
By integration, we obtain
$H_1(\eta)\ge H_1(\phi_r(0))\ge \tfrac12Kr^{-2}T^*$.  The conclusion follows. 
\end{proof}

\begin{rmk}\label{rem-lmeigenf}
Under the assumptions of  Lemma~\ref{lmeigenf2} with $s_0=0$, $K\log^p a\ge C(N)>0$,
$\omega=B_r$ and $u_0 1_{B_r}\not\equiv 0$,  we have $T^*<\infty$, as a consequence of \eqref{phirprime}.
\end{rmk}

The proof of Theorem~\ref{pr:globalblowup0BIS3-0}(i) is based on Lemma~\ref{lmeigenf} and smoothing,
interpolation and semigroup arguments using the upper growth assumption \eqref{deflocalized2}.

\begin{proof}[Proof of Theorem~\ref{pr:globalblowup0BIS3-0}(i)]
Denote by $\|\cdot\|_m$ the $L^m(\Omega)$ norm for $m\in[1,\infty]$. 
In this proof, $C_1=C_1(\Omega,p,q)$ and $C_3=C_3(N,p,q)$
denote generic positive constants with this dependence.

\smallskip

{\bf Step 1.} {\it Proof of \eqref{uppernew0}.}
Recall the $L^q$-$L^\infty$ estimate for the heat semigroup:
\be{heatlambda}
\|e^{t\Delta}\phi\|_\infty\le C_0e^{-\mu t}t^{-n/2q}\|\phi\|_q,
\ee
where $C_0,\mu>0$ depend only on $\Omega$.
We may fix $\eps\in(0,1)$ and $\theta\in(0,2/(n+2))$, depending only on $q,N$, such that $q=1+\theta-\eps$. 
%%Let $q=1/\theta$. 

Let $\tau\ge 1$, $0\le t_1<t_2<T$ with $t_2-t_1\le \tau$, and set
$$U(t_1,t_2)=\sup_{s\in[t_1,t_2]} e^{\mu s}\|u(s)\|_\infty.$$
Let $t\in(t_1,t_2]$. By the variation of constants formula, we have
$$u(t,x)\le e^{(t-t_1)\Delta} u(t_1)+\bar K\int_{t_1}^t e^{(t-s)\Delta} \Big[\big(1+u^q(s)\big)1_\omega\Big]ds.$$
Using \eqref{heatlambda} and $e^{t\Delta}1_\omega\le 1$, we get
$$ \begin{aligned}
\|u(t)\|_\infty
&\le C_0e^{-\mu(t-t_1)}\|u(t_1)\|_\infty+\bar K\tau +
C_0\bar K \int_{t_1}^t  e^{-\mu(t-s)}(t-s)^{-N/2q}\|u^{1-\eps+\theta}(s)1_\omega\|_q ds\\
&\le C_0e^{-\mu(t-t_1)}\|u(t_1)\|_\infty+\bar K\tau +C_0e^{\mu(\eps-t)}\bar K 
\int_{t_1}^t (t-s)^{-\frac{N\theta}{2}}(e^{\mu s}\|u(s)\|_\infty)^{1-\eps}\|u^\theta(s)1_\omega\|_qds
\end{aligned}$$
hence
\be{emutut}
e^{\mu t}\|u(t)\|_\infty
\le C_0e^{\mu t_1}\|u(t_1)\|_\infty+\bar K\tau e^{\mu t}
+C_1\bar K U^{1-\eps}(t_1,t_2)\int_{t_1}^t(t-s)^{-N\theta/2}\|u\|_{L^1(\omega)}^\theta ds. %%C
\ee
Let $\sigma=1-\ts\frac{(N+2)\theta}{2}>0$. Since $\frac{N\theta}{2(1-\theta)}<1$, by H\"older's inequality, we have
$$\int_{t_1}^t(t-s)^{-\frac{N\theta}{2}}\|u\|_{L^1(\omega)}^\theta ds
\le \Big(\int_{t_1}^t (t-s)^{-\frac{N\theta}{2(1-\theta)}}ds\Big)^{1-\theta} \Big(\int_{t_1}^t \|u\|_{L^1(\omega)} ds\Big)^\theta \le
C_3\tau^\sigma \Big(\int_{t_1}^{t_2} \|u\|_{L^1(\omega)} ds\Big)^\theta.$$
Let $\gamma=1/\eps$. Combining this with \eqref{emutut}, taking supremum over $t\in(t_1,t_2)$ and using 
\eqref{behaviorODE}, \eqref{intvarphi1b}
and Young's inequality, we get
$$ \begin{aligned}
U(t_1,t_2)
&\le C_0e^{\mu t_1}\|u(t_1)\|_\infty+\bar K\tau e^{\mu t_2}
+C_1C_\omega^\theta\bar K \tau^\sigma U^{1-\eps}(t_1,t_2)\big(M_0+\exp\big[C_3K^{-\beta}(T-t_2)^{-\beta}\big]\big)^\theta\\
&\le C_0e^{\mu t_1}\|u(t_1)\|_\infty+\bar K\tau e^{\mu t_2}+\ts\frac12 U(t_1,t_2)
+\tilde C_2\tau^{\sigma\gamma}\bar K^\gamma
\big(M_0^\nu+\exp\big[C_3K^{-\beta}(T-t_2)^{-\beta}\big]\big),
\end{aligned}$$
where $\nu=\theta/\eps$, $\tilde C_2=C_1C_\omega^\nu$, hence
$$ \begin{aligned}
U(t_1,t_2)
&\le 2C_0 e^{\mu t_1}\|u(t_1)\|_\infty+2\bar K\tau e^{\mu t_2}+\tilde C_2\tau^{\sigma\gamma}\bar K^\gamma
\big(M_0^\nu+\exp\big[C_3K^{-\beta}(T-t_2)^{-\beta}\big]\big).
\end{aligned}$$

Next choose $\tau=\tau(\Omega)\ge 1$ large enough so that $\rho=\rho(\Omega):=2C_0e^{-\mu\tau}<\frac12$. 
Since $\gamma\ge 1$ and recalling $\bar K\ge 1$, we deduce
\be{Ut1t2}
\begin{aligned}
\|u(t_2)\|_\infty
&\le 2C_0e^{\mu(t_1-t_2)}\|u(t_1)\|_\infty+\tilde C_2\bar K^\gamma
\big(M_0^\nu+\exp\big[C_3K^{-\beta}(T-t_2)^{-\beta}\big]\big).\end{aligned}
\ee
If $T\le \tau$, then applying  \eqref{Ut1t2} with $t_1=0$ and $t_2=t$ yields 
\be{caseTle1}
\|u(t)\|_\infty\le 2C_0\|u_0\|_\infty+\tilde C_2\bar K^\gamma
\big(M_0^\nu+\exp\big[C_3K^{-\beta}(T-t)^{-\beta}\big]\big),\quad 0<t<T.
\ee

From now on we assume $T>\tau$.
Let %$D=D(K,p,q,\Omega,\omega)
$D:=\tilde C_2\big[M_0^\nu+\exp(C_3K^{-\beta}\tau^{-\beta})\big]$. We claim that
\be{inducClaim}
\|u(j\tau)\|_\infty\le \rho^j\|u_0\|_\infty+(1-\rho)^{-1}D\bar K^\gamma,\quad j=0,1,\dots,[\ts\frac{T}{\tau}-1].
\ee
Indeed this is trivial for $j=0$ and, if this is true for given integer $j$ and if $j+1\le \ts\frac{T}{\tau}-1$, 
then applying \eqref{Ut1t2} with $t_1=j\tau$ and $t_2=(j+1)\tau\le T-\tau$, we obtain
$$\|u((j+1)\tau)\|_\infty\le \rho\|u(\tau j)\|_\infty+D\bar K^\gamma
\le  \rho^{j+1}\|u_0\|_\infty+(1-\rho)^{-1}D\bar K^\gamma,$$
hence \eqref{inducClaim} follows by induction.

Set $\lambda=\lambda(\Omega)=-\tau^{-1}\log\rho>0$.
Let $t\in(0,T-\tau]$ and set $j=[\ts\frac{t}{\tau}]$. Applying \eqref{Ut1t2} with $t_1=j\tau$ and $t_2=t$
(noting that $t_2-t_1\le \tau$ and $T-t_2\ge \tau$), and next using  \eqref{inducClaim}, we obtain
$$
\|u(t)\|_\infty
\le 2C_0\|u(j\tau)\|_\infty+D\bar K^\gamma
\le 2C_0\rho^j\|u_0\|_\infty+D_1\bar K^\gamma
$$
with $D_1=(4C_0+1)D$,
hence
\be{utC0A}
\|u(t)\|_\infty
\le 2C_0\rho^{-1}e^{-\lambda t}\|u_0\|_\infty+D_1\bar K^\gamma,
\quad t\in(0,T-\tau].
\ee
Finally, if $t\in[T-\tau,T)$, applying \eqref{Ut1t2} with $t_1=T-\tau$, $t_2=t$ and \eqref{utC0A} with $t$ replaced by $T-\tau$ yields
$$ \begin{aligned}
\|u(t)\|_\infty
&\le 2C_0\|u(T-\tau)\|_\infty
+\tilde C_2\bar K^\gamma\Big(M_0^\nu+\exp\big[C_3K^{-\beta}(T-t)^{-\beta}\big]\Big)\\
&\le 2C_0\rho^{-1}e^{-\lambda t}\|u_0\|_\infty+D_1\bar K^\gamma
+\tilde C_2\bar K^\gamma\Big(M_0^\nu+\exp\big[C_3K^{-\beta}(T-t)^{-\beta}\big]\Big),
\quad t\in[T-\tau,T).
\end{aligned}$$
This along with \eqref{caseTle1},  \eqref{utC0A},
yields 
$$ \begin{aligned}
\|u(t)\|_\infty
&\le C_1e^{-\lambda t}\|u_0\|_\infty
+\tilde C_2\bar K^\gamma
\Big(M_1+\exp\big[C_3K^{-\beta}(T-t)^{-\beta}\big]\Big),\quad 0<t<T,
\end{aligned}$$
with $M_1=M_0^{\nu}+\exp(C_3K^{-\beta}\tau^{-\beta})\le M_0^{\nu}+\exp(C_1K^{-\beta})$,
which concludes the proof of \eqref{uppernew0}.

\smallskip

%$$\|u^q(t)\|_\infty\le C_1^qe^{-q\lambda t}\|u_0\|^q_\infty
%+C_5^q\bar K^{\gamma q}\Big(M_1^q+\exp\big[C_3qK^{-\beta}(T-t)^{-\beta}\big]\Big)$$

%$$\|u(t)\|_\infty\le C_1e^{-\lambda t}\|u_0\|_\infty
%+C_5\bar K^\gamma\Big(M_1+\exp\big[C_4K^{-\beta}(T-t)^{-\beta}\big]\Big)$$

{\bf Step 2.} {\it Proof of \eqref{uppernew0delta}.}
 In this proof, $C$ denotes a generic positive constant depending only on~$\Omega$.
Denote by $\lambda_1$ and $\Phi_1$ the first eigenvalue and first eigenfunction of the Dirichlet Laplacian on $\Omega$ 
normalized by $\int_{\Omega} \Phi_1 = 1$.
By the estimate $|\nabla_x G(t,x,y)|\le C\tau^{-(N+1)/2} e^{-|x-y|^2/Ct}$, 
we have 
$$G(t,x,y)\le Ct^{-(N+1)/2} e^{-|x-y|^2/Ct} \delta(x)\quad\hbox{ in $(0,\infty)\times\Omega\times\Omega$,}$$
 hence
$e^{t\Delta}1_\Omega=\int_\Omega G(t,x,y) dy\le Ct^{-1/2}\delta(x)\le Ct^{-1/2} \Phi_1(x)$.
Since $e^{t\Delta}\Phi_1=e^{-\lambda_1 t}\Phi_1$, it follows that
\be{etdelta1}
e^{t\Delta}1_\Omega\le Ct^{-1/2} e^{(t/2)\Delta}\Phi_1(x)\le Ct^{-1/2} e^{-\lambda_1 t/2}\Phi_1(x)
\le Ct^{-1/2} e^{-\lambda_1 t/2}\delta(x),\quad t>0.
\ee
By \eqref{uppernew0} and \eqref{deflocalized2}, we have
\be{futinfty}
\|g(u(t))\|_\infty
\le  \hat C_1 e^{-\lambda q t}
+\hat C_2\Big(M_1^q+\exp\big[C_3K^{-\beta}(T-t)^{-\beta}\big]\Big),
\quad 0<t<T,
\ee
with $\hat C_1=C_1\bar K\|u_0\|_\infty^q$, $\hat C_2=\bar K^{\gamma q+1}\tilde C_2^q=C_1\bar K^{\gamma q+1}C_\omega^{\nu q}$.
%$\tilde C_3=\bar K^{\gamma q+1}C_3$.
Fixing $\bar\lambda\in(0,\min(\lambda q,\lambda_1/2))$, we deduce from \eqref{etdelta1}
that
\be{etdelta20}
\begin{aligned}
\int_0^t e^{(t-s)\Delta}1_\Omega ds
&\le C \Big(\int_0^t  s^{-\frac12} e^{-\frac{\lambda_1 s}{2}}ds\Big) \delta(x)
\le C \delta(x),\quad t>0,
\end{aligned}
\ee
and
\be{etdelta2}
\begin{aligned}
\int_0^t  e^{-\lambda q s} e^{(t-s)\Delta}1_\Omega ds
&=\int_0^t  e^{-\lambda q (t-s)} e^{s\Delta}1_\Omega ds \\
&\le C \Big(\int_0^t  e^{-\bar\lambda (t-s)} s^{-\frac12} e^{-\frac{\lambda_1 s}{2}}ds\Big) \delta(x)
\le Ce^{-\bar\lambda t} \delta(x),\quad t>0.
\end{aligned}
\ee
Using the variation of constants formula
and  \eqref{etdelta1}-\eqref{etdelta2}, we obtain
$$\begin{aligned}
u(t,x)
&\le \|u_0\|_\infty e^{t\Delta} 1_\Omega +\int_0^t \|g(u(s))\|_\infty  e^{(t-s)\Delta}1_\Omega ds \\
&\le C\Big\{\|u_0\|_\infty  t^{-1/2} e^{-\lambda_1 t/2}+\hat C_1e^{-\bar\lambda t}
+\hat C_2\Big(M_1^q+\exp\big[C_3K^{-\beta}(T-t)^{-\beta}\big]\Big)\Big\}
%+\tilde C_2+\tilde C_3\exp\big[C_3K^{-\beta}(T-t)^{-\beta}\big]\Big) 
\delta(x) \\
&\le C_1\Big\{\|u_0\|_\infty  t^{-1/2} e^{-\lambda_1 t/2}+\bar K\|u_0\|_\infty^qe^{-\bar\lambda t}\\
&\qquad\qquad +C_\omega^{\nu q}\bar K^{\gamma q+1}\Big(M_1^q+\exp\big[C_3K^{-\beta}(T-t)^{-\beta}\big]\Big)\Big\}
\delta(x),
\end{aligned}$$
hence \eqref{uppernew0delta}.

 Finally, property \eqref{uppernew0deltaC2} for $\omega=B(x_0,r)$ follows immediately from 
the dependence of $C_\omega$ established in  Lemma~\ref{lmeigenf}(ii).
\end{proof}

\begin{proof}[Proof of Theorem~\ref{pr:globalblowup0BIS3-0}(ii)]
The lower semicontinuity follows from standard continuous dependence properties of the solution
with respect to parameters and initial data, as well as with respect to $r$ when $\omega=B(x_0,r)$
(the latter is a consequence of classical arguments based on Gronwall's lemma,
also using $L^m$-$L^\infty$ estimates for the heat semigroup with $m>n/2$).

Let us establish the upper semicontinuity.
We fix $\mu\ge 0$, $u_0\in X$. 
We may suppose $T_0:=T(\mu,u_0,\omega)<\infty$, 
hence in particular $\mu>0$, since otherwise there is nothing to prove.
Assume for contradiction that there exist $\eta>0$ and sequences $\mu_j\to \mu$, $X\ni u_{0,j}\to u_0$,
as well as $r_j\to r_0$ in case $\omega$ is a ball $B(x_0,r_0)$,
such that 
$T_j:=T(\mu_j,u_{0,j})\in[T_0+\eta,\infty]$.
Denote by $u$, $u_j$ the corresponding solutions.
We may assume that $\mu/2\le\mu_j\le 2 \mu$, $\|u_{0,j}\|_\infty\le M$ for some $M>0$
and $r_0/2<r_j<2r_0$.
By~\eqref{uppernew0}
(applied with $K\mu_j$, $\bar K\mu_j$ instead of $K, \bar K$, respectively),
\eqref{uppernew0deltaC2}, \eqref{uppernew0deltaM1} and \eqref{lambda1r},
there exists a constant $C$ independent of $j$ such that
$$\|u_j(t)\|_\infty\le C\big[1+\exp\big(C(T_j-t)^{-\beta}\big)\big],
\quad 0<t<T_j,\ j\ge 1.$$
In particular, since $T_j - T_0 \in [\eta,\infty]$, we obtain
$$\|u_j(t)\|_\infty\le \tilde C:=C\big[1+\exp\big(C\eta^{-\beta}\big)\big],
\quad 0<t<T_0,\ j\ge 1.$$
Passing to the limit by continuous dependence of the solution with respect to parameters and initial data
(and $r$), we deduce
that $\|u(t)\|_\infty\le \tilde C$ in $[0,T_0)$: a contradiction with $T_0=T(\mu,u_0,\omega)<\infty$.
\end{proof}

\begin{proof}[Proof of Theorem~\ref{pr:globalblowup0BIS3-0}(iii)]
Let $x_0\in\overline\Omega\setminus\overline\omega$ (assumed to be nonempty)
and let $d>0$ satisfy ${\rm dist}(x,\omega)\ge d$.
Let $x\in B_{d/2}(x_0)\cap \Omega$. 
Since ${\rm dist}(x,\omega)\ge d/2$, 
by the Gaussian upper bound for the heat kernel, we have
$$G(t-s,x,y)\le (t-s)^{-\frac{N}{2}}e^{-{|x-y|^2\over 4(t-s)}}\le (t-s)^{-\frac{N}{2}}e^{-d^2\over 16(t-s)}\le Cd^{-N}e^{-d^2\over 32(T-s)},\quad 0<s<t<T,\ y\in \omega,$$
with $C=C(N)>0$.
Assume $K>0$ if $p>2$ (hence $\beta<1$), or $K\ge 32\,C_4d^{-2}$ if $p=2$ (hence $\beta=1$).
Using the variation of constants formula and \eqref{futinfty}, for all $t\in(0,T)$, it follows that
$$\begin{aligned}
u(t,x)
&\le \|u_0\|_\infty +\int_0^t \int_\omega G(t-s,x,y)\|f(u(s,\cdot))\|_\infty  dy ds\\
&\le \|u_0\|_\infty+Cd^{-N}|\omega| T \sup_{s\in(0,T)}
\Bigl((\hat C_1+\hat C_2)e^{-d^2\over 2(T-s)}+\tilde C_3e^{{C_4\over K(T-s)^\beta}-{d^2\over 32(T-s)}}\Bigr) =:M<\infty,
\end{aligned}$$
hence $x_0\not\in B(u_0)$.
We conclude that 
$B(u_0)\subset \big\{x\ ;\, {\rm dist}(x,\omega)\le (32\,C_4/K)^{1/2}\big\}$ if $p=2$ and that $B(u_0)\subset \bar \omega$
if $p>2$.
This completes the proof.
\end{proof}

\section{Proofs of blow-up controllability results}
\label{sec:cbt}

In this part we prove Theorems~\ref{tm:mainresultGlobalBU} and \ref{tm:buregional}
as a consequence of the results in Section~\ref{secBU}.
We stress that these proofs completely bypass the use of the null-controllability of the heat equation.
%(unlike that of \Cref{tm:mainresultregionalBU}; cf.~\Cref{lm:exactcontrol} below).

We consider the initial boundary value problem
\begin{equation}
	\label{eq:HeatControl2}
		\left\{
			\begin{array}{ll}
				 \partial_t u - \Delta u = h(t,u(t,\cdot)) 1_{\omega} &  (0,+\infty)\times\Omega,
				\\
				u = 0 & (0,+\infty)\times\partial\Omega,
				\\
				u(0, \cdot) = u_0 &\Omega,
			\end{array}
		\right.
\end{equation}
where $h$ is given by \eqref{eq:ControlFeedback} (resp., \eqref{eq:ControlFeedbackRegional})
and denote by $T^*=T^*(k,K)$ the existence time of its maximal strong solution
(resp., $T^*=T^*(k,K,r)$).
Note that $T^*>T/2$ since the problem is (inhomogeneous)  linear for $t\le T/2$. 

\begin{proof} [Proof of Theorems~\ref{tm:mainresultGlobalBU} and \ref{tm:buregional}]
{\bf Step 1.} {\it Linear part of the control. On $[0,T/2]$ (resp.,~$[0,T-\eps]$), we will have $u=U:=w+kz$,} 
where $w=e^{t\Delta} u_0$ and $z$ is the solution of the linear inhomogeneous problem 
$z_t-\Delta z=1_\omega$ on $\Omega$ with $z(0)=0$ and $0$ Dirichlet conditions.
We first claim that there exists $k=k(T,\Omega,\omega, u_0)>0$ such that
\be{lowerdelta}
U(t,x)\ge \delta(x),\quad t\in[T/2,T].
\ee
Indeed, for $t\in[T/2,T]$, we have
$z(t,x)\ge c_1\delta(x)$ by the Hopf lemma, and 
$|w(t,x)|\le c_2\delta(x)$
 by $C^1$ regularity, with $c_i=c_i(T,\Omega,\omega,u_0)>0$. 
Consequently,
$y(t,x)\ge (kc_1-c_2)\delta(x)>0$ and it suffices to take $k=(1+c_2)/c_1$.

\smallskip

{\bf Step 2.} {\it Nonlinear part of the control for \Cref{tm:mainresultGlobalBU}.}
Fix a nonempty open subset $\omega'$ of class $C^2$ contained in $\omega$, $v_0=u(T/2) \not\equiv 0$ in $\omega'$, owing to \eqref{lowerdelta}, and denote by $T_*(v_0,K)$ the maximal existence time of the solution $v$ of 
problem \eqref{eqfu} with $f$ given by \eqref{eq:modelf} with $p<2$ and $u_0=v_0$.
We have 
$$T^* = T/2 + T_*(v_0,K).$$
Since $T_*(v_0,0)=\infty$, it follows from 
(the l.s.c.~part of) \Cref{tm:cbt} that $T_*(v_0,K_1)>T/2$ for $K_1>0$ small.
On the other hand, by Lemma~\ref{lmeigenf2}(ii), we have $T_*(v_0,K_2)<T/2$ for $K_2>K_1$ large.
By continuity, applying \Cref{tm:cbt} again, we deduce the existence of 
$K\in (K_1,K_2)$ such that $T_*(v_0,K)=T/2$, hence $T^*(k,K)=T$.
In view of \eqref{twosidedBU}, this completes the proof of \Cref{tm:mainresultGlobalBU}.

\smallskip

{\bf Step 3.} {\it Nonlinear part of the control for \Cref{tm:buregional}.}
With the notations of Corollary~\ref{cor:reg}, we select 
$\kappa=\tfrac14 c_1(\Omega)$ and recall $\tilde\kappa = 1 + C_0 \kappa^{-1/2}$.
{We then take $a\ge 2$ such that $\kappa\log^p a\ge C(N)$ (cf.~Lemma~\ref{lmeigenf2}(ii)),}
and next pick $r_0\in(0,\delta(x_0))$ such that $\bar B(x_0,\tilde\kappa r_0)\subset \omega$.
For $r\in (0,r_0]$, denote by $\tilde T_*(v_0,r)$ the maximal existence time of the solution of the
problem 
\begin{equation}
	\label{eqfu2}
		\left\{
			\begin{array}{ll}
				 \partial_t v - \Delta v = \kappa r^{-2} v\log^2(a+v)1_{B_r}(x),
& \text{ in }  (0,T) \times \Omega,  
				\\
				v = 0 & \text{ on } (0,T)\times \partial \Omega, 
				\\
				v(0, \cdot) = v_0 & \text{ in } \Omega.
			\end{array}
		\right.
\end{equation}
We have 
\be{decompTstar}
T^*(k,\kappa r^{-2},r) = T-\eps+ \tilde T_*(u(T-\eps),r).
\ee
Since $U$, defined in Step 1, satisfies $\sup_{t\in(0,T)}\|U(t)\|_\infty<\infty$, 
and owing to the local well-posedness of \eqref{eqfu2},
we have $\tau:=\inf_{t\in (T/2,T)} \tilde T^*(U(t),r_0)>0$.
Choose $\eps=\tfrac12\min(T,\tau)$, hence $\tilde T_*(u(T-\eps),r_0)>\eps$.
%hence $T^*(k,\kappa r_0^{-2},r_0)>T$.
Also, by \eqref{lowerdelta}, we have $\eta:=\inf_{B_{r_0}} U(T-\eps)>0$.
It follows from Lemma~\ref{lmeigenf2}(ii) that $\tilde T_*(u(T-\eps),r)<\eps$ for $r>0$ small.
By the continuity property in Theorem~\ref{tm:cbt}(ii), we deduce the existence of 
$r\in (0,r_0)$ such that $T_*(v_0,K)=\eps$, hence $T^*(k,K)=T$ by \eqref{decompTstar}.
In view of Corollary~\ref{cor:reg}, this completes the proof of \Cref{tm:buregional}.
\end{proof}

\begin{rmk}
The first part of the above proof, leading to \eqref{lowerdelta}, demonstrates by a very simple argument the so-called global non-negative controllability of the heat equation. This notion was introduced by the first author in \cite{LB20}, and was particulary relevant for the problem of global of null-controllability of weakly superlinear heat equations in the semi-dissipative case. Similar arguments already appeared in \cite{BCMR96}, see in particular the proof of Lemma 7. 
\end{rmk}

\begin{rmk}
\label{rmk:otherpossiblecontrolstrategies}
Let us justify the other possible control strategies.

First, we take the control as
\begin{equation}
	\label{eq:ControlFeedbackGlobaBisRmk}
	h(t) =	\left\{
			\begin{array}{ll}
				 k 1_{\omega}, & t \in (0,T_1),\\
				 	 \noalign{\vskip1mm}
				 u \log^{p}(2+|u|) 1_{\omega}, & t \in (T_1,T), 
				\end{array}
				\right.
\end{equation} 
for suitable $k$ and $T_1$. Let $z_k(t)$ be the solution at time $t$ to the linear inhomogeneous problem $\partial_t z_k - \Delta z_k = k 1_{\omega}$, starting from $z(0) = u_0$. Then as before, there exists $k_0>0$ such that for every $t \in [T/2,T]$, $z_{k_0}(t) \ge c \delta(x)$. Then there exists $\varepsilon \in (0,T/2)$ such that for every $t \in [T/2,T]$, $T^*(z_{k_0}(t)) > \varepsilon$ where $T^*$ is the 
existence time %blow-up 
associated to the nonlinearity  $ u \log^{p}(2+|u|) 1_{\omega}$. Let $T_1 = T-\varepsilon \in (T/2,T)$. So from now $k_0>0$ and $T_1$ are fixed.  By definition $T^*(z_{k_0}(T_1))> \varepsilon$. Moreover, it is easy to establish that $T^*(z_{k}(T_1)) \to 0$ as $k \to +\infty$. By continuity of the blow-up time with respect to the initial data, there exists $k_1 \in (k_0, +\infty)$ such that $T^*(z_{k}(T_1)) = \varepsilon$. Then the control \eqref{eq:ControlFeedbackGlobaBisRmk} leads to the global blow-up at time $T$.

Secondly, we take the control as
\begin{equation}
	\label{eq:ControlFeedbackGlobaTerRmk}
	h(t) =	K (2+|u|) \log^{p}(2+|u|) 1_{\omega}, 
\end{equation}
Using the notations of \Cref{lmeigenf2}, the conclusion follows from the continuity of the existence time with respect to $K$
and the fact that $T^*(0,u_0) = +\infty$ and $T^*(K, u_0) \to 0$ as $K \to +\infty$. 
To verify the latter property we first observe that
for given $\eta>0$ there exists $K_\eta>0$ such that $u(\eta,x) \ge \delta(x)$ for all $K\ge K_\eta$
(this follows from the argument in Step 1 of the proof of Theorem~\ref{tm:mainresultGlobalBU}
along with $h(t)\ge c(p)K1_{\omega}$).
We may then apply \Cref{lmeigenf2}(i) with $s_0=0$.
\end{rmk}

\begingroup
\small

\bibliographystyle{plain}

\bibliography{heat}

\endgroup

\end{document}